\pdfoutput=1
\RequirePackage{ifpdf}
\ifpdf 
\documentclass[pdftex]{sigma}
\else
\documentclass{sigma}
\fi

\usepackage{multirow,longtable}
\usepackage{tikz-cd}

\newcommand\car[1] {\langle#1\rangle}

\newcommand{\g}{\mathfrak{g}} 	
\newcommand{\Z}{\mathbb{Z}} 	
\newcommand{\N}{\mathbb{N}} 	
\newcommand{\C}{\mathbb{C}} 	
\newcommand{\F}{\mathbb{F}} 	

\numberwithin{equation}{section}

\newtheorem{Theorem}{Theorem}[section]
\newtheorem*{theoremX}{Theorem~\ref{thm:solutionsgrpeq}}
\newtheorem*{lemmaX}{Lemma~\ref{lm_transparent}}
\newtheorem{Corollary}[Theorem]{Corollary}
\newtheorem{Lemma}[Theorem]{Lemma}
\newtheorem{Question}[Theorem]{Question}
 { \theoremstyle{definition}
\newtheorem{Definition}[Theorem]{Definition}
\newtheorem{Example}[Theorem]{Example}}

\begin{document}

\allowdisplaybreaks

\newcommand{\arXivNumber}{1612.07960}

\renewcommand{\PaperNumber}{076}

\FirstPageHeading

\ShortArticleName{Factorizable $R$-Matrices for Small Quantum Groups}

\ArticleName{Factorizable $\boldsymbol{R}$-Matrices for Small Quantum Groups}

\Author{Simon LENTNER and Tobias OHRMANN}
\AuthorNameForHeading{S.~Lentner and T.~Ohrmann}
\Address{Fachbereich Mathematik, University of Hamburg,\\ Bundesstra{\ss}e 55, 20146 Hamburg, Germany}
\Email{\href{mailto:simon.lentner@uni-hamburg.de}{simon.lentner@uni-hamburg.de}, \href{mailto:tobias.ohrmann@uni-hamburg.de}{tobias.ohrmann@uni-hamburg.de}}

\ArticleDates{Received January 16, 2017, in f\/inal form September 15, 2017; Published online September 25, 2017}

\vspace{-1mm}

\Abstract{Representations of small quantum groups $u_q({\mathfrak{g}})$ at a root of unity and their extensions provide interesting tensor categories, that appear in dif\/ferent areas of algebra and mathematical physics. There is an ansatz by Lusztig to endow these categories with the structure of a braided tensor category. In this article we determine all solutions to this ansatz that lead to a non-degenerate braiding. Particularly interesting are cases where the order of $q$ has common divisors with root lengths. In this way we produce familiar and unfamiliar series of (non-semisimple) modular tensor categories. In the degenerate cases we determine the group of so-called transparent objects for further use.}

\Keywords{factorizable; $R$-matrix; quantum group; modular tensor category; transparent object}

\Classification{17B37; 20G42; 81R50; 18D10}

\vspace{-2.5mm}

\section{Introduction}

Hopf algebras with $R$-matrices, so called \emph{quasitriangular Hopf algebras}, give rise to tensor categories with a braiding $c\colon V\otimes W\stackrel{\sim}{\longrightarrow} W\otimes V$. Of particular interest are braided tensor categories where the braiding fulf\/ills a certain
non-degeneracy condition, see Def\/inition~\ref{def:FactorizabilityOfCats}, which is equivalent to the fact that there are no
\emph{transparent objects}~$V$, i.e., no objects where the double-braiding $c^2\colon V\otimes W\stackrel{\sim}{\longrightarrow} V\otimes W$ is the identity for all~$W$. A $\C$-linear tensor category with a nondegenerate braiding, as well as f\/initeness conditions and another natural transformation $\theta\colon V\stackrel{\sim}{\longrightarrow} V$ (twist), is called a~\emph{modular tensor category}. Note that we do not require the category to be semisimple.

Modular tensor categories have many interesting applications: They give rise to topological invariants and mapping class group actions \cite{KL01, Tur94}. For example, the standard genera\-tors~$T$,~$S$ of the mapping class group of the torus~$\mathrm{SL}_2(\Z)$ are constructed from~$\theta$ and~$c^2$, respectively. A~dif\/ferent source for modular tensor categories in mathematical physics are vertex algebras. There are only few example classes of modular tensor categories, in particular non-semisimple ones.

The aim of the present article is to provide modular tensor categories from \emph{small quantum groups} $u_q(\g)$ at a primitive $\ell$-th root of unity $q$ for a f\/inite-dimensional simple complex Lie algebra~$\g$. Lusztig \cite{Lus90} has constructed these f\/inite-dimensional Hopf algebras and provided an ansatz for an $R$-matrix $R_0\bar{\Theta}$, where the f\/ixed element $\bar{\Theta} \in u_q(\g)^{-} \otimes u_q(\g)^{+}$ is constructed from a dual basis of PBW generators, while $R_0 \in u_q(\g)^0 \otimes u_q(\g)^0$ is a free parameter subject to some constraints. He gives one canonical solution for~$R_0$ whenever~$\ell$ has no common divisors with root lengths, otherwise there are cases where no $R$-matrix exists~\cite{KS11} and the quantum group becomes more interesting \cite{Len14}, involving, e.g., the dual root system. Of particular interest in conformal f\/ield theory \cite{FGST06, FT10, GR15} is the most extreme case where all root lengths $(\alpha,\alpha)$ divide~$\ell$. In particular our article addresses the question, which modular tensor category appear in these cases. We f\/ind indeed, e.g., in Lemma~\ref{lm:Lusztigkernel} that these extremal cases give especially nice $R$-matrices; although in general they are not factorizable and will require modularization (see for example~\cite{Bru00}) to match the CFT side.

But even if there are no common divisors with the root length, the resulting braided tensor categories may not fulf\/ill the non-degeneracy condition and hence provides no modular tensor category.

\looseness=-1 Both obstacles (existence and non-degeneracy) can be be resolved by extending the Cartan part of the quantum group by a choice of a lattice $\Lambda_R\subseteq \Lambda \subseteq \Lambda_W$ between root- and weight-lattice, respectively a choice of a subgroup of the fundamental group $\pi_1:=\Lambda_W/\Lambda_R$, corresponding to a choice of a Lie group between adjoint and simply-connected form. These extensions are already present in~\cite{Lus90} as the choice of two lattices~$X$,~$Y$ with pairing $X\times Y\to \C^\times$ (root datum). In this way the number of possible $R$ matrices increases and the purpose of our paper is to study them all.

In a previous article \cite{LN14b} we have already constructed some solutions $R_0$ in this spirit (under some additional assumptions). As it turns out, the solutions can be parametrized by subgroups $H_1,H_2\subset \pi_1$ and group pairings between $H_1$, $H_2$, and the set of solutions depends on the common divisors of $\ell$ not only with root lengths, but also divisors of the Cartan matrix. Some cases admit no braided structure, while others have multiple in-equivalent solutions. An interesting occurrence was for example that $B_n$ behaves dif\/ferently for~$n$ odd or even, and that $D_{2n}$ with non-cyclic fundamental group allows several more solutions with non-symmetric~$R_0$.

In the present article we conclude this ef\/fort: First we introduce more systematical techniques that allow us to compute a list of all quasitriangular structures (without additional assumptions, so we f\/ind more solutions). Then our new techniques allow us to determine, which of these choices fulf\/ill the non-degeneracy condition. We also determine which cases have a ribbon structure. A~main role in the f\/irst part is played by a natural pairing~$a_\ell$ on the fundamental group~$\pi_1$ which depends only on the common divisors of~$\ell$ with the fundamental group and encapsulates the essential $\ell$-dependence. Then the non-degeneracy of the braiding turns out to depend only on the $2$-torsion of the abelian group in question.

Our result produces a list of modular tensor categories for representations of quantum groups. Moreover we use our methods to explicitly describe the group of transparent objects if the category is not modular, which is for example a prerequisite for modularization.

We now discuss our methods and results in more detail:

In Section~\ref{section2} we brief\/ly recall the Lie theory and Hopf algebra preliminaries: For every f\/inite-dimensional (semi-)simple complex Lie algebra $\g$ and a~primitive $\ell$-th root of unity $q$ Lusztig has introduced in~\cite{Lus90} the \emph{small quantum group} $u_q(\g)$ which has a~triangular decomposition $u_q^+u_q^0u_q^-$ where the (exponentiated) \emph{Cartan algebra} $u_q^0$ is the groupring of the root lattice $\Lambda_R$ modulo some suitable sublattice and $u_q^\pm$ are generated by \emph{simple root vectors} $E_{\alpha_i}$, $F_{\alpha_i}$ fulf\/illing $q$-deformed Serre relations. In \cite[Section~32]{Lus93} he gives an ansatz for an $R$-matrix in the form~$R_0\bar{\Theta}$, where~$\bar{\Theta}$ consists of dual PBW basis' and $R_0\in u_q^0\otimes u_q^0$ is an arbitrary element in the Cartan part that has to fulf\/ill certain relations.

Our goal is to study the existence and non-degeneracy of $R$-matrices of this form for the quan\-tum group $u_q(\g,\Lambda,\Lambda')$ with any choice of lattice between root- and weight-lattice \smash{$\Lambda_R\subseteq \Lambda \subseteq \Lambda_W$} and any possible choice of quotient by a subgroup $\Lambda'\subseteq \Lambda$ in the Cartan part $u^0=\C[\Lambda/\Lambda']$. Later, we prove that $\Lambda'$ is in fact unique if we want a quasitriagular structure (Corollary~\ref{cor:LambdaPrime}).

The $R_0$-matrix has the following interpretation: It is an $R$-matrix for the groupring $\C[\Lambda/\Lambda']$ and it appears as the braiding between highest-weight vectors in our $u_q(\g)$-modules. Thus the previous theorem clarif\/ies which choices for an $R$-matrix for the group ring lift to the quantum group.

In Section~\ref{section3} we address the question of constructing quasi-triangular $R$-matrices. First we brief\/ly recall the following general combinatorial result in~\cite{LN14b}:
\begin{theoremX}
The $R_0$-matrix is necessarily of the form
\begin{gather*}
f(\mu,\nu)=\frac 1d q^{-(\mu, \nu)}g(\bar{\mu},\bar{\nu})\delta_{\bar{\mu}\in H_1}\delta_{\bar{\nu}\in H_2},
\end{gather*}
where $H_1$, $H_2$ are subgroups of $\Lambda/\Lambda_R \subseteq \pi_1$ with $|H_1|=|H_2|=:d$ $($not necessarily isomorphic!$)$ and $g\colon H_1\times H_2\to\C^{\times}$ is a pairing of groups.
\end{theoremX}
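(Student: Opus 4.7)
The plan is to parametrize $R_0\in u^0\otimes u^0 = \C[\Lambda/\Lambda']\otimes \C[\Lambda/\Lambda']$ as a function via
\begin{gather*}
R_0 = \sum_{\mu,\nu\in\Lambda/\Lambda'} f(\mu,\nu)\, K_\mu\otimes K_\nu,
\end{gather*}
and to translate the quasi-triangularity axioms that remain after factoring out Lusztig's quasi-$R$-matrix $\bar{\Theta}$ into concrete constraints on~$f$.

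First I would exploit the intertwining axiom $R\,\Delta(x)=\Delta^{\mathrm{op}}(x)\,R$ applied to $x=E_{\alpha_i}$ and $x=F_{\alpha_i}$. The element $\bar{\Theta}$ is built precisely so that this relation is satisfied on the nose apart from Cartan twists coming from $\Delta(E_{\alpha_i})=E_{\alpha_i}\otimes 1 + K_{\alpha_i}\otimes E_{\alpha_i}$; the surviving obligation on $R_0$ takes the shape
\begin{gather*}
f(\mu+\alpha_i,\nu)=q^{-(\alpha_i,\nu)}f(\mu,\nu),\qquad f(\mu,\nu+\alpha_i)=q^{-(\mu,\alpha_i)}f(\mu,\nu),
\end{gather*}
for every simple root $\alpha_i$. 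Setting $h(\mu,\nu):=q^{(\mu,\nu)}f(\mu,\nu)$ converts these into invariance of $h$ under $\Lambda_R$-translation in each variable, so $h$ descends to a function on $(\Lambda/\Lambda_R)\times(\Lambda/\Lambda_R)$. (For $q^{-(\mu,\nu)}$ to be a well-defined function on $\Lambda/\Lambda'$, the lattice $\Lambda'$ is forced to be compatible with~$q$ in a precise way; this is the input for Corollary~\ref{cor:LambdaPrime}.)

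Second, I would plug $R=R_0\bar{\Theta}$ into the two hexagon identities $(\Delta\otimes\mathrm{id})R = R_{13}R_{23}$ and $(\mathrm{id}\otimes\Delta)R=R_{13}R_{12}$. Projecting onto the Cartan part turns these into bi-multiplicativity statements for $h$, from which one extracts that the support of $h$ is a product $H_1\times H_2$ of subgroups of $\Lambda/\Lambda_R$ and that $h$ restricted there is a group pairing $g\colon H_1\times H_2\to\C^\times$. Finally, invertibility of $R_0$ in $u^0\otimes u^0$ forces $g$ to be non-degenerate in each slot, so $|H_1|=|H_2|=:d$; the normalization $\frac{1}{d}$ is then pinned down by a Fourier argument on the finite abelian group $\Lambda/\Lambda_R$, matching the counit axiom $(\epsilon\otimes\mathrm{id})R=1$.

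The main obstacle is the hexagon step: one must show that the support of $h$ is genuinely a \emph{product} of honest subgroups (rather than some subset closed under multiplication only for specific partners) and that $h$ is truly bi-multiplicative there. This requires a careful bookkeeping of the mixed $\bar{\Theta}_{13}$, $\bar{\Theta}_{23}$ and $\bar{\Theta}_{12}$ contributions, using the commutation of PBW generators with the $K_\mu$ to separate the purely-Cartan part of the hexagon identity from the mixed terms — only once this separation is made does the pairing structure on $(H_1,H_2)$ become visible.
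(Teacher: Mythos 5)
Your reduction of the problem to functional equations on $f$ is sound and matches the paper's setup: the intertwining axiom yields the translation identities \eqref{f01}, and the hexagon/counit axioms yield the convolution and summation identities; this separation of the $\bar{\Theta}$-contributions from the Cartan part is exactly the content of M\"uller's result quoted as Theorem~\ref{thm:R0}, so up to that point you are re-deriving known input. The gap is in the step you describe as ``projecting onto the Cartan part turns these into bi-multiplicativity statements for $h$, from which one extracts that the support of $h$ is a product $H_1\times H_2$ of subgroups and that $h$ restricted there is a group pairing.'' Nothing in the convolution identities
\begin{gather*}
\sum_{\nu_1+\nu_2 = \nu} f(\mu_1,\nu_1)f(\mu_2,\nu_2) = \delta_{\mu_1,\mu_2}\, f(\mu_1,\nu)
\end{gather*}
directly asserts bi-multiplicativity of $h$ or that its support is a subgroup: a priori $f(\mu,\cdot)$ is just some function, not a character, and the identities only constrain convolutions of pairs of these functions. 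Extracting from them that the support is an honest product of subgroups and that the normalized restriction is a group pairing is the actual mathematical content of Theorem~\ref{thm:solutionsgrpeq}. The paper does not prove this here either: it imports the result from \cite{LN14b}, where it is established via a separate combinatorial theorem (\cite{LN14a}) that works only for cyclic $\pi_1$, plus a by-hand verification for $\Z_2\times\Z_2$; the authors explicitly state that a closed proof for general abelian groups is open. So the step you flag as ``the main obstacle'' is indeed the obstacle, but you mislocate its nature: it is not a matter of bookkeeping the $\bar{\Theta}_{12}$, $\bar{\Theta}_{13}$, $\bar{\Theta}_{23}$ terms (that is already done cleanly by Theorem~\ref{thm:R0}); it is a genuinely nontrivial combinatorial problem about solutions of the convolution identities on a finite abelian group, for which no general argument is supplied in your proposal or, indeed, in the literature.

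Two smaller points. First, the claim that invertibility of $R_0$ forces $|H_1|=|H_2|$ and non-degeneracy of $g$ belongs to the separate equivalence in Lemma~\ref{lm:NondegGroupPairing} (non-degeneracy of $\hat f$ on $G_1\times G_2$, not of $g$ on $H_1\times H_2$); in the structure theorem itself the equality of cardinalities has to come out of the combinatorial analysis. Second, the normalization is $\tfrac{1}{d\,|\Lambda_R/\Lambda'|}$ (as in the body of the paper, Theorem~\ref{thm:solutionsgrpeq}) rather than $\tfrac1d$, since the counit identity $\sum_\mu f(\mu,\nu)=\delta_{\nu,0}$ sums over the full support $\Lambda_1/\Lambda'$ of order $d\,|\Lambda_R/\Lambda'|$; your Fourier argument would produce the correct constant once the support is known, so this is consistent with your outline but worth stating precisely.
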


Then we proceed differently than in the previous article: Using the previous result, we prove in Lemma~\ref{lm:NondegGroupPairing} that the quasitriangularity of~$R$ is equivalent to the assertion that the group pairing $\hat{f}:=|\Lambda/\Lambda'|\cdot f$ between the preimages $G_i:=\Lambda_i/\Lambda'$ of the groups~$H_i$ is \emph{non-degenerate} (which is no surprise). In particular we show that this condition f\/ixes~$\Lambda'$ uniquely. In later applications we often encounter $\hat{f}$ as a natural identif\/ication of $G_1$ and the dual $\hat{G}_2$, e.g., when studying representation theory.

To f\/ind all solutions $f$ with this property we develop a machinery to push $\hat{f}$ into the fundamental group $\pi_1$, which encapsulates all the $\ell$-dependence: In Def\/inition~\ref{def:matrix_A_l} we give an abstract characterization of a~\emph{centralizer transfer map}
\begin{gather*}
	A_\ell\colon \ \Lambda/\Lambda_R \stackrel{\sim}{\longrightarrow} \operatorname{Cent}_{\Lambda}^\ell(\Lambda_R)/\operatorname{Cent}_{\Lambda_R}^\ell(\Lambda)
\end{gather*}
(without \looseness=-2 proving that it always exists). In a generic case this is just multiplication by $\ell$, but it se\-verely depends on common divisors of $\ell$ with root length and divisors of the Cartan matrix. With this matrix we can transfer $q^{-(\mu,\nu)}$ to a natural form $a_g^\ell$ on the fundamental group. We prove that $\hat{f}$ is non-degenerate if\/f $a_g^\ell(\mu,\nu)=q^{-(\mu, A_\ell(\nu))} \cdot g(\mu, A_\ell(\nu))$ is non-degenerate. This explains why the set of solutions, say for fundamental group $\Z_n$ always looks like the subset of invertible elements $\Z_n^\times$ but it is shifted (namely by~$A_\ell$) depending on $\ell$ and the root system in~question.

In Section~\ref{section4} the remaining computational work is done for quasitriagularity: We calculate a list containing $a_g^\ell$ for all simple $\g$, depending on common divisors of~$\ell$ with root length and divisors. We thus write down all solutions for $f$ and hence $R$-matrices. The calculation starts with the Smith normal form for the Cartan matrix in question and uses three cases: For $\Lambda=\Lambda_W$ we have a generic construction, the cases $A_n$ with their large fundamental group $\Z_{n+1}$ is treated by hand, as is $D_{2n}$ with non-cyclic fundamental group, which has the only cases allowing $\Lambda_1\neq \Lambda_2$.

In Section~\ref{section5} we address our main issue of factorizability with our new tools:

\looseness=-1 In Section~\ref{section5.1} we introduce factorizability. Then we calculate the monodromy matrix $R_{21}R$ for an arbitrary choice of $R$-matrix in terms the $R_0$-part. This gives a purely lattice theoretic problem equivalent to the factorizability of such an $R$-matrix. Then we prove in the main Theo\-rem~\ref{thm:InvertyMonodromymat} that factorizability is equivalent to the non-degeneracy of a symmetrization $\operatorname{Sym}_G\big(\hat{f}\big)$ of~$\hat{f}$. As will turn out later, the radical of this form is isomorphic to the group of transparent objects.

In Section~\ref{section5.2} we restrict ourselves to the \emph{symmetric case} where $H_1=H_2$ and $f$, $g$ are symmetric. Other cases appear only in some of
the non-cyclic $\Z_2\times\Z_2$-extension for type $\g=D_{2n}$ and are dealt with in Section~\ref{section5.3} and give surprising new solutions.

The main result for the symmetric case is that the radical of the form $\operatorname{Sym}_G\big(\hat{f}\big)$ is in this case simply the $2$-torsion of~$\Lambda/\Lambda'$ (Example~\ref{ex:RadSymf}) and that this is non-degenerate precisely for odd $\ell$ and odd $\Lambda/\Lambda_R$ as well as for $\g=B_n$, $\Lambda=\Lambda_R$, $\ell\equiv 2$ ${\rm mod}~4$ including~$A_1$.

In Section~\ref{section5.4} we prove the following result:

\begin{lemmaX}
	The transparent objects in the category of representations of the Hopf algebra $u_q(\g,\Lambda)$ with $R$-matrix given by Lusztig's ansatz are $1$-dimensional objects $\C_\chi$ and are the $f$-transformed of the radical of $\operatorname{Sym}_G\big(\hat{f}\big)$:
	\begin{gather*}
		\chi(\mu)=f(\mu,\xi),\qquad \xi\in \operatorname{Rad}\big(\operatorname{Sym}_G\big(\hat{f}\big)\big).
	\end{gather*}
\end{lemmaX}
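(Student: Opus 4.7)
The plan is to compute the monodromy $c^2 = R_{21}R$ on $V \otimes W$ for an arbitrary $u_q(\g,\Lambda)$-module $V$ and reduce transparency to a lattice condition already isolated in Section~\ref{section5.1}. Writing $R = R_0 \bar{\Theta}$ with $\bar{\Theta} \in u_q^- \otimes u_q^+$, the expansion $\bar{\Theta} = 1 \otimes 1 + \sum_{\beta > 0} \bar{\Theta}_\beta$, where $\bar{\Theta}_\beta$ has components in the weight-$(-\beta,\beta)$ part, shows that $R_{21}R$ splits as a ``Cartan-diagonal'' contribution $(R_0)_{21}R_0$ which acts by scalars $f(\mu,\nu) f(\nu,\mu)$ on $\mu$- and $\nu$-weight spaces, plus correction terms coming from the nontrivial $\bar{\Theta}_\beta$ which land in strictly shifted weight spaces.

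First I would show that every transparent $V$ must be one-dimensional. If some $E_\alpha$ or $F_\alpha$ acts nontrivially on $V$, I would pick a suitable test module $W$ (for instance a standard cyclic $u_q^+$- or $u_q^-$-module) whose highest-weight vector $w$ satisfies $E_\alpha w = 0$ and for which the $\bar{\Theta}_\alpha$ term of $c^2(v\otimes w)$ produces a nonzero vector in a strictly shifted weight space of $V \otimes W$. Such a contribution cannot be cancelled by the weight-preserving Cartan-diagonal part, contradicting transparency. Thus all simple root vectors annihilate $V$, so $V$ decomposes as a sum of one-dimensional $u_q^0$-characters $\C_\chi$; since any direct summand of a transparent object is again transparent, we may assume $V = \C_\chi$.

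On a one-dimensional $\C_\chi$ the $\bar{\Theta}$-part acts trivially, so $c^2$ on $\C_\chi \otimes W_\nu$ reduces to the scalar $f(\xi_\chi,\nu) f(\nu,\xi_\chi)$, where $\xi_\chi \in G$ is the element dual to $\chi$ under the identification $G \simeq \hat{G}$ provided by $\hat{f}$, i.e.\ $\chi(\mu) = f(\mu,\xi_\chi)$. Transparency of $\C_\chi$ is then the requirement that $f(\xi_\chi,\nu) f(\nu,\xi_\chi) = 1$ for all $\nu \in G$, which is by definition the condition $\xi_\chi \in \operatorname{Rad}\bigl(\operatorname{Sym}_G\bigl(\hat{f}\bigr)\bigr)$. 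The converse is immediate: for every $\xi$ in this radical, the character $\chi(\mu) := f(\mu,\xi)$ is well defined and yields a transparent one-dimensional module by the same scalar computation. The main obstacle I expect is the rigorous one-dimensionality argument in the second step, which requires choosing test modules $W$ that reliably detect nontrivial $E_\alpha$- or $F_\alpha$-action on $V$ and careful bookkeeping of how the shifts produced by $\bar{\Theta}_\beta$ interact with the weight-space decomposition of $V$; the final scalar identification is essentially a restatement of the definition of $\operatorname{Sym}_G\bigl(\hat{f}\bigr)$ and its radical.
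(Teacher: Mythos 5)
Your overall strategy coincides with the paper's: test transparency against modules on which $\bar{\Theta}$ degenerates, thereby reducing the problem to the group ring $\C[\Lambda/\Lambda']$ with $R$-matrix $R_0$, and then identify the transparent characters with the radical of $\operatorname{Sym}_G\big(\hat{f}\big)$. The difference lies in how one-dimensionality is obtained. You propose a direct argument: if some $E_\alpha$ or $F_\alpha$ acts nontrivially, a $\bar{\Theta}_\beta$-correction lands in a shifted weight space and cannot be cancelled. As stated this has a hole you should be aware of: the terms of $R_{21}R$ of bidegree $(\beta_1,\beta_2)$ shift the individual weights by $(\beta_1-\beta_2,\beta_2-\beta_1)$, so \emph{different} pairs with the same difference $\beta_1-\beta_2$ contribute to the same shifted weight space and could a priori cancel among themselves, not only against the Cartan-diagonal part. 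This is repaired exactly by your own suggested choice of test module: take $W$ a Verma-type module $u_q(\g)\otimes_{u_q^+}\C_\psi$ with $w$ its highest-weight vector, so that only $\beta_2=0$ survives, and use that the vectors $b^-w$ for distinct PBW monomials are linearly independent in $W$; one also needs that the $R_0$-scalars on weight vectors are nonzero, which follows from perfectness of $\hat{f}$. The paper avoids this bookkeeping entirely: it only tests highest-weight vectors against highest-weight vectors (where $\bar{\Theta}$ acts trivially for free), deduces that the highest weight of a transparent module must be an $f$-transform of a radical element, and then observes that since $\operatorname{Rad}_0$ is the $2$-torsion of $\Lambda/\Lambda'$ in the symmetric cases, these characters satisfy $\chi|_{2\Lambda_R}=1$ and hence already define $1$-dimensional $u_q(\g)$-modules, so the irreducible transparent objects are forced to be $\C_\chi$. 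Your route, if completed, proves more (it handles non-highest-weight vectors directly), but it is genuinely more work.

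The second place where your sketch glosses over a real step is the scalar computation. The element $R_0=\sum_{\mu,\nu}f(\mu,\nu)K_\mu\otimes K_\nu$ does not act on $\C_\chi\otimes\C_\psi$ by "$f$ evaluated at the weights"; it acts by the Fourier-type sum $\sum_{\mu,\nu}f(\mu,\nu)\chi(\mu)\psi(\nu)$, and the identification of the product of the two such sums in $c^2$ with $\operatorname{Sym}_G\big(\hat{f}\big)(\lambda,\xi)$ (after writing $\chi=f(\cdot,\xi)$ and $\psi=f(\lambda,\cdot)$) is precisely the character-orthogonality computation carried out in the paper's proof of Lemma~\ref{lm_transparent}. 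It does work out, because $\hat{f}$ is a perfect pairing, but it is not merely "a restatement of the definition" of the radical; without the orthogonality step your claimed scalar $f(\xi_\chi,\nu)f(\nu,\xi_\chi)$ is an assertion, not a computation. Neither issue is a wrong turn --- both can be filled in --- but both are exactly the places where the actual proof content lives.
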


In the following we summarize our results by a table containing all quasitriangular quantum groups $u_q(\g,\Lambda)$ with  their group of transparent objects. In Section~\ref{section6} we show that all our quasitriangular quantum groups  admit a~ribbon element.  The factorizable solutions and thus modular tensor categories are~$\ell$ odd, $\Lambda=\Lambda_R$ and the following new factorizable cases:

($\ell$ odd, $E_6$, $\Lambda=\Lambda_W$) and ($\ell\equiv 2$ ${\rm mod}~4$, $\g=B_n$, $\Lambda=\Lambda_R$) (including $A_1$) and ($\ell$ odd, $\g=D_{2n}$, $\Lambda_1\neq \Lambda_2$). All other cases can be modularized as discussed in Question~\ref{q:modularize}.

 The columns of the following table are labeled by

\vspace{-3mm}

 \begin{enumerate}\itemsep=-1.5pt
 	\item[1)] the f\/inite-dimensional simple complex Lie algebra~$\g$,
 	\item[2)] the natural number $\ell$, determining the root of unity $q=\exp\big( {\frac{2 \pi i}{\ell}}\big) $,
 	\item[3)] the number of possible $R$-matrices for the Lusztig ansatz,
 	\item[4)] the subgroups $H_i \subseteq H=\Lambda/\Lambda_R$ introduced in Theorem~\ref{thm:solutionsgrpeq},
 	\item[5)] the subgroups $H_i$ in terms of generators given by multiples of fundamental dominant weights $\lambda_i \in \Lambda_W$,
 	\item[6)] the group pairing $g\colon H_1 \times H_2 \to \C^\times$ determined by its values on generators,
 	\item[7)] the group of transparent objects $T \subseteq \Lambda/\Lambda'$ introduced in Lemma~\ref{lm_transparent}.
 \end{enumerate}

\vspace{-4.5mm}

{\footnotesize \centering
\begin{longtable}{@{}c@{\,}||@{\,}c@{\,}|@{\,}c@{\,}||@{\,}c@{\,}|@{\,}c@{\,}|@{\,}c@{\,}|@{\,}c@{}}
$\g$
	&$\ell$
	&\#
	&$H_i\cong$
	&$H_i\,{\scriptstyle (i=1,2)}$
	&$g$
	&$T \subseteq \Lambda/\Lambda'$
	\\ \hline \hline
	\multirow{4}{*}{\text{all}}
	& \multirow{2}{*}{$\ell$ odd}	& \multirow{2}{*}{1}	& \multirow{4}{*}{$\Z_1$} &\multirow{4}{*}{$\car{0}$} &\multirow{4}{*}{$g=1 $} &\multirow{2}{*}{\textbf{0}} \\
	&&&&&&\\ \cline{2-3}\cline{7-7}
	&\multirow{2}{*}{$\ell \equiv 0$ mod $4$ }&\multirow{2}{*}{1}&&&&\multirow{2}{*}{$\Z_2^n$}\\
	&&&&&&\\	
	\hline
	\multirow{6}{*}{} 	& 	& \multirow{6}{*}{$\infty$}	& && &\multirow{2}{*}{$\Z_2^{n-1}$, $2 \nmid x$} 	\\
	&&&\multirow{2}{*}{$\Z_d$} &\multirow{2}{*}{$\car{\hat{d}\lambda_n}$} &\multirow{2}{*}{$g(\hat{d}\lambda_n,\hat{d}\lambda_n)=\exp \big( \frac{2\pi i k}{d}\big) $}&\\
	\multirow{1}{*}{$A_{n\geq 1}$}	&&&&&&\multirow{2}{*}{$\Z_2^{n}$, $2 \,|\, x$}\\
	\multirow{1}{*}{$\pi_1=\Z_{n+1}$}	& 	&	& \multirow{2}{*}{$d\,|\, n+1$}&\multirow{2}{*}{$\hat{d}=\frac{n+1}{d}$}&\multirow{2}{*}{$\operatorname{gcd}\big( d,\frac{k \ell - \hat{d}n}{\operatorname{gcd}(\ell,\hat{d})}\big) =1 $}&\\
	&&&&&&\multirow{2}{*}{$x=\frac{d \ell}{\operatorname{gcd}(\ell,\hat{d})}$}\\
	&&&&&&\\
	\hline
	\multirow{8}{*}{} 	& \multirow{2}{*}{$\ell \equiv 2$ mod $4$}	& \multirow{2}{*}{1}	& \multirow{2}{*}{$\Z_1$} &\multirow{2}{*}{$\car{0}$} &\multirow{2}{*}{$g=1 $} & \multirow{2}{*}{\textbf{0}}	\\
	&&&&&&\\ \cline{2-7}
	& \multirow{2}{*}{$\ell \equiv 2$ mod $4$}& \multirow{2}{*}{2}&\multirow{2}{*}{$\Z_2$}&\multirow{2}{*}{$\car{\lambda_n}$}&\multirow{2}{*}{$g(\lambda_n,\lambda_n)=\pm 1 $}&\multirow{2}{*}{$\Z_2$}\\
	&&&&&&\\ \cline{2-7}
	$B_{n\geq 2}$& \multirow{2}{*}{$\ell \equiv 0$ mod $4$}	& \multirow{2}{*}{2}	& \multirow{2}{*}{$\Z_2$} &\multirow{2}{*}{$\car{\lambda_n}$} &\multirow{2}{*}{$g(\lambda_n,\lambda_n)=\pm 1 $} & \multirow{2}{*}{$\Z^n_2$}	\\
	$\pi_1=\Z_{2}$		&	& 	&&&&\\ \cline{2-7}
	& \multirow{2}{*}{$\ell$ odd} 	& \multirow{2}{*}{1}	&\multirow{2}{*}{$\Z_2$}&\multirow{2}{*}{$\car{\lambda_n}$}&\multirow{2}{*}{$g(\lambda_n,\lambda_n)=(-1)^{n+1}$}&\multirow{2}{*}{$\Z_2$}\\
	&&&&&&\\	
	\hline
	\multirow{8}{*}{} 	& \multirow{2}{*}{$\ell\equiv2\text{ mod }4$}	& \multirow{2}{*}{1}	& \multirow{2}{*}{$\Z_1$}&\multirow{2}{*}{$\car{0}$} &\multirow{2}{*}{$g=1 $} &\multirow{2}{*}{$\Z_2^{n-2}$} 	\\
	&&&&&&\\\cline{2-7}
	&\multirow{2}{*}{$\ell\equiv2\text{ mod }4$}&\multirow{2}{*}{1}&\multirow{6}{*}{$\Z_2$}&\multirow{6}{*}{$\car{\lambda_n}$} &\multirow{2}{*}{$g(\lambda_n,\lambda_n)=1 $} &\multirow{2}{*}{$\Z_2^{n-1}$} \\	
	&&&&&&\\	
	\cline{2-3} \cline{6-7}
	$C_{n\geq 3}$		& \multirow{2}{*}{$\ell\equiv0\text{ mod }4$}	& \multirow{2}{*}{2}	&&&\multirow{2}{*}{$g(\lambda_n,\lambda_n)=\pm 1 $} &\multirow{2}{*}{$\Z_2^n$} \\
	$\pi_1=\Z_{2}$		& 	& 	&&&&	\\ \cline{2-3} \cline{6-7}
	& \multirow{2}{*}{$\ell$ odd}	& \multirow{2}{*}{1}	&&&\multirow{2}{*}{$g(\lambda_n,\lambda_n)=- 1 $}&\multirow{2}{*}{$\Z_2$}\\
	&&&&&&\\	
	\hline
	\multirow{20}{*}{} 	& \multirow{2}{*}{$\ell\equiv2\text{ mod }4$}	& \multirow{2}{*}{1}	& \multirow{2}{*}{$\Z_1$}&\multirow{2}{*}{$\car{0}$} &\multirow{2}{*}{$g=1 $} &\multirow{2}{*}{$\Z_2^{2(n-1)}$} 	\\
	&&&&&&\\\cline{2-7}
	& \multirow{2}{*}{$\ell \equiv 2 \text{ mod }4$}	& \multirow{2}{*}{1}	& \multirow{6}{*}{$\Z_2$} &\multirow{3}{*}{$H_1 \cong\car{\lambda_{2n-1}}$} & \multirow{2}{*}{$g(\lambda_{2n-1},\lambda_{2n})=(-1)^n$} &\multirow{4}{*}{$\Z_2^{2n}$}	\\
	&&&&&&\\	\cline{2-3} \cline{6-6}
	&\multirow{2}{*}{$\ell \equiv 0 \text{ mod }4$}	& 	\multirow{2}{*}{$2\delta_{2 \,|\, n}$}& &&\multirow{2}{*}{$g(\lambda_{2n-1},\lambda_{2n})=\pm 1$, $n$ even} & \\
	& 	&	& &\multirow{3}{*}{$H_2 \cong\car{\lambda_{2n}}$}&&	\\ \cline{2-3} \cline{6-7}
	&\multirow{2}{*}{$\ell$ odd} &\multirow{2}{*}{1}&&&\multirow{2}{*}{$g(\lambda_{2n-1},\lambda_{2n})=-1$}&\multirow{2}{*}{\textbf{0}}\\
	$D_{2n\geq 4}$&&&&&&\\
	\cline{2-7}
	\cline{2-7}
	$\pi_1=\Z_{2} \times \Z_2$&\multirow{2}{*}{$\ell \equiv 2 \text{ mod }4$}&\multirow{2}{*}{$1$}&\multirow{6}{*}{$\Z_2$}&\multirow{6}{*}{$\car{\lambda_{2n}}$}&\multirow{2}{*}{$g(\lambda_{2n},\lambda_{2n})=(-1)^{n+1}$}&\multirow{2}{*}{$\Z_2^{2n-1}$}\\	&&&&&&\\\cline{2-3} \cline{6-7}
	&\multirow{2}{*}{$\ell \equiv 0 \text{ mod }4$}&\multirow{2}{*}{$2\delta_{2 \nmid n}$}&&&\multirow{2}{*}{$g(\lambda_{2n},\lambda_{2n})=\pm 1$, $n$ odd}&\multirow{2}{*}{$\Z_2^{2n}$}\\
	&&&&&&\\\cline{2-3} \cline{6-7}
	&\multirow{2}{*}{$\ell$ odd}&\multirow{2}{*}{1}&&&\multirow{2}{*}{$g(\lambda_{2n},\lambda_{2n})=-1$}&\multirow{2}{*}{$\Z_2$}\\
	&&&&&&\\\cline{2-7}
	&\multirow{2}{*}{$\ell$ even}	&\multirow{2}{*}{$16$}&\multirow{6}{*}{$\Z_2 \times \Z_2$}&\multirow{6}{*}{$\car{\lambda_{2n},\lambda_{2n+1}}$}&\multirow{2}{*}{$g(\lambda_{2(n-1)+i},\lambda_{2(n-1)+j})=\pm 1$}&\multirow{2}{*}{$\Z_{2}^{2n}$}\\
	&&&&&&\\\cline{2-3}\cline{6-7}
	&\multirow{4}{*}{$\ell$ odd}&\multirow{4}{*}{$6$}&&&\multirow{2}{*}{$\det(K)=K_{12}+K_{21}=0$ mod $2$}&\multirow{2}{*}{$\Z_{2}$}\\
	&&&&&&\\\cline{6-7}
	&&&&&\multirow{2}{*}{$\det(K)=K_{12}+K_{21}=1$ mod $2$}&\multirow{2}{*}{$\Z_{2}^2$}\\
	&&&&&&\\
	\hline
\pagebreak
\hline
	\multirow{12}{*}{} 	& \multirow{2}{*}{$\ell\equiv2\text{ mod }4$}	& \multirow{2}{*}{1}	& \multirow{2}{*}{$\Z_1$}&\multirow{2}{*}{$\car{0}$} &\multirow{2}{*}{$g=1 $} &\multirow{2}{*}{$\Z_2^{2n}$} 	\\
	&&&&&&\\\cline{2-7}
	& \multirow{2}{*}{$\ell\equiv2\text{ mod }4$}	& \multirow{2}{*}{1}	& \multirow{6}{*}{$\Z_2$} &\multirow{6}{*}{$\car{2\lambda_{2n+1}}$} &\multirow{2}{*}{$g(2\lambda_{2n+1},2\lambda_{2n+1})=1 $} &\multirow{4}{*}{$\Z^{2n+1}_2$} 	\\
	& 	& 	&&&&\\	\cline{2-3} \cline{6-6}
	& \multirow{2}{*}{$\ell\equiv0\text{ mod }4$}	& \multirow{2}{*}{2}	&&&\multirow{2}{*}{$g(2\lambda_{2n+1},2\lambda_{2n+1})=\pm 1 $} & \\
	& 	& 	&&&&	\\ \cline{2-3} \cline{6-7}
	$D_{2n+1\geq 5}$	& \multirow{2}{*}{$\ell$ odd}	& \multirow{2}{*}{1}	&&&\multirow{2}{*}{$g(2\lambda_{2n+1},2\lambda_{2n+1})=- 1 $}&\multirow{2}{*}{$\Z_2$}\\
	$\pi_1=\Z_{4}$	&&&&&&\\ \cline{2-7}
	&\multirow{2}{*}{$\ell$ even}&\multirow{2}{*}{4}&\multirow{4}{*}{$\Z_4$}&\multirow{4}{*}{$\car{\lambda_{2n+1}}$}&\multirow{2}{*}{$g(\lambda_{2n+1},\lambda_{2n+1})=c$, $c^4=1$}&\multirow{2}{*}{$\Z^{2n+1}_2$}\\
	&&&&&&\\ \cline{2-3}\cline{6-7}
	&\multirow{2}{*}{$\ell$ odd}&\multirow{2}{*}{2}&&&\multirow{2}{*}{$g(\lambda_{2n+1},\lambda_{2n+1})=\pm 1$}&\multirow{2}{*}{$\Z_2$}\\
	&&&&&&\\
	\hline
	\multirow{8}{*}{} 	& \multirow{2}{*}{$\ell\equiv2\text{ mod }4$}	& \multirow{2}{*}{1}	& \multirow{2}{*}{$\Z_1$}&\multirow{2}{*}{$\car{0}$} &\multirow{2}{*}{$g=1 $} &\multirow{2}{*}{$\Z_2^6$} 	\\
	&&&&&&\\\cline{2-7}
	& \multirow{2}{*}{$\ell \equiv 0$ mod $3$}	& \multirow{2}{*}{3}	& \multirow{6}{*}{$\Z_3$} &\multirow{6}{*}{$\car{\lambda_n}$} &\multirow{2}{*}{$g(\lambda_n,\lambda_n)=c$, $c^3=1$} &\multirow{3}{*}{$\Z_2^6$, $2 \,|\, \ell$} 	\\
	&	& 	&&&&\\ \cline{2-3} \cline{6-6}
	$E_{6}$	& \multirow{2}{*}{$\ell \equiv 1$ mod $3$} 	& \multirow{2}{*}{2}	&&&\multirow{2}{*}{$g(\lambda_n,\lambda_n)=1,\exp \big(\frac{2 \pi i 2}{3} \big) $}&\\
	$\pi_1=\Z_{3}$&&&&&&\multirow{3}{*}{\textbf{0}, $2 \nmid \ell$}\\ \cline{2-3} \cline{6-6}
	&\multirow{2}{*}{$\ell \equiv 2$ mod $3$}&\multirow{2}{*}{2}&&&\multirow{2}{*}{$g(\lambda_n,\lambda_n)=1,\exp \big(\frac{2 \pi i }{3} \big) $}&\\	
	&&&&&&\\	
	\hline
	\multirow{6}{*}{} 	& \multirow{2}{*}{$\ell\equiv2\text{ mod }4$}	& \multirow{2}{*}{1}	& \multirow{2}{*}{$\Z_1$}&\multirow{2}{*}{$\car{0}$} &\multirow{2}{*}{$g=1 $} &\multirow{2}{*}{$\Z_2^6$} 	\\
	&&&&&&\\\cline{2-7}	
	& \multirow{2}{*}{$\ell$ even}	& \multirow{2}{*}{2}	& \multirow{4}{*}{$\Z_2$} &\multirow{4}{*}{$\car{\lambda_n}$} &\multirow{2}{*}{$g(\lambda_n,\lambda_n)=\pm 1 $} &\multirow{2}{*}{$\Z_2^n$}\\
	$E_{7}$		&	& 	&&&&\\ \cline{2-3} \cline{6-7}
	$\pi_1=\Z_{2}$	& \multirow{2}{*}{$\ell$ odd} 	& \multirow{2}{*}{1}	&&&\multirow{2}{*}{$g(\lambda_n,\lambda_n)=1$}& \multirow{2}{*}{$\Z_2$}\\
	&&&&&&\\	
	\hline	
	\multirow{2}{*}{$E_{8}$} 	& \multirow{2}{*}{$\ell\equiv2\text{ mod }4$}	& \multirow{2}{*}{1}	& \multirow{2}{*}{$\Z_1$}&\multirow{2}{*}{$\car{0}$} &\multirow{2}{*}{$g=1 $} &\multirow{2}{*}{$\Z_2^8$} 	\\
	&&&&&&\\\cline{2-7}
	\hline	
	\multirow{2}{*}{$F_{4}$} 	& \multirow{2}{*}{$\ell\equiv2\text{ mod }4$}	& \multirow{2}{*}{1}	& \multirow{2}{*}{$\Z_1$}&\multirow{2}{*}{$\car{0}$} &\multirow{2}{*}{$g=1 $} &\multirow{2}{*}{$\Z_2^2$} 	\\
	&&&&&&\\\cline{2-7}
	\hline	
	\multirow{2}{*}{$G_{2}$} 	& \multirow{2}{*}{$\ell\equiv2\text{ mod }4$}	& \multirow{2}{*}{1}	& \multirow{2}{*}{$\Z_1$}&\multirow{2}{*}{$\car{0}$} &\multirow{2}{*}{$g=1 $} &\multirow{2}{*}{$\Z_2^2$} 	\\
	&&&&&&\\\cline{2-7}
	\hline				
	\caption{Solutions for $R_0$-matrices.}
\end{longtable}}
\vspace{-5mm}

In the case $D_{2n}$, $\Lambda=\Lambda_W$, $g$ is uniquely defined by a~$(2\times 2)$-matrix $K \in \mathfrak{gl}(2,\F_2)$, s.t.\ $g(\lambda_{2(n-1)+i},
\lambda_{2(n-1)+j})=\exp \big( \frac{2 \pi i K^g_{ij}}{2} \big)$ for $i,j\in \{1,2\}$.

\vspace{-2mm}

\section{Preliminaries}\label{section2}

\vspace{-1mm}

\subsection{Lie theory}\label{section2.1}

Throughout this article, $\g$ denotes a f\/inite-dimensional simple complex Lie algebra. We f\/ix a~choice of simple roots $\Delta=\{ \alpha_i\,|\,i \in I\}$, so that the Cartan matrix $C$ is given by $C_{ij}=2\frac{(\alpha_i,\alpha_j)}{(\alpha_i,\alpha_i)}$, where $(\,,\,)$ denotes the normalized Killing form. For a root $\alpha$, we def\/ine $d_\alpha:=\frac{(\alpha,\alpha)}{2}$ and set $d_i=d_{\alpha_i}$. By $\Lambda_R:=\mathbb{Z}[\Delta]$ and $\Lambda_R^\vee:=\mathbb{Z}[\Delta^\vee]$ we denote the (co)root lattice of~$\g$.

\looseness=-1 By $\Lambda_W$, we denote the \emph{weight lattice} spanned by fundamental dominant weights $\lambda_i$, which are def\/ined by the equation $(\lambda_i,\alpha_j)=\delta_{i,j}d_i$. Finally, we def\/ine the \emph{co-weight lattice} $\Lambda_W^\vee$ as the $\mathbb{Z}$-span of the elements $\lambda_i^\vee:=\frac{\lambda_i}{d_i}$. The quotient $\pi_1:=\Lambda_W/\Lambda_R$ is called the \emph{fundamental group} of~$\g$.

One can easily see that the Killing form restricts to a perfect pairing $(\,,\,)\colon \Lambda^\vee_W \times \Lambda_R \to \mathbb{Z}$ and that we get a string of inclusions $\Lambda_R \subseteq \Lambda_R^\vee \subseteq \Lambda_W \subseteq \Lambda_W^\vee$.

\vspace{-1.5mm}

\subsection[Lusztig's ansatz for $R$-matrices]{Lusztig's ansatz for $\boldsymbol{R}$-matrices}\label{section2.2}

The starting point for our work \cite{LN14b} was Lusztig's ansatz in \cite[Section~32.1]{Lus93} for a universal $R$-matrix of $U_q(\g)$. Namely, for a specif\/ic element $\bar\Theta\in U_q^{\geq 0}\otimes U_q^{\leq 0}$ from a dual basis and a suitable (not further specif\/ied) element in the coradical $R_0\in U_q^0\otimes U_q^0$ we are looking for $R$-matrices of the form
\begin{gather*}
	R=R_0\bar\Theta.
\end{gather*}
We remark that there is no claim that all possible $R$-matrices are of this form. However they are an interesting source of examples, motivated by the interpretation of $u_q(\g)$ as a quotient of a Drinfeld double and thus well-behaved with respect to the triangular decomposition. This ansatz has been successfully generalized to general diagonal Nichols algebras in~\cite{AY13}. In our more general setting $U_q(\g,\Lambda,\Lambda')$, we have
\begin{gather*}
	R_0\in \C[\Lambda/\Lambda']\otimes\C[\Lambda/\Lambda'].
\end{gather*}

This ansatz has been worked out by M\"uller in his dissertation \cite{Mue98a, Mue98b,Mue98b+} for small quantum groups $u_q(\g)$ which we will use in the following, leading to a system of quadratic equation on~$R_0$ that are equivalent to $R$ being an $R$-matrix:

\begin{Theorem}[{cf.\ \cite[Theorem~8.2]{Mue98b+}}]\label{thm:Theta}
Let $u:=u_q(\g)$.
\begin{enumerate}\itemsep=0pt
\item[$(a)$] There is a unique family of elements $\Theta_\beta\in u_{\beta}^-\otimes u_{\beta}^+$, $\beta\in\Lambda_R$, such that $\Theta_0=1\otimes 1$ and
 $\Theta=\sum_{\beta}\Theta_{\beta}\in u\otimes u$ satisfies $\Delta(x)\Theta = \Theta \bar\Delta(x)$ for all $x\in\ u$.
\item[$(b)$] Let $B$ be a vector space-basis of $u^-$, such that $B_{\beta}=B\cap u^-_{\beta}$ is a basis of~$u^-_{\beta}$ for all $\beta$. Here, $u_{\beta}^-$ refers to the natural $\Lambda_R$-grading of $u^-$. Let $\{b^* \,|\, b\in B_{\beta}\}$ be the basis of $u_{\beta}^+$ dual to $B_{\beta}$ under the non-degenerate bilinear form $(\,\cdot\,,\,\cdot\,)\colon u^-\otimes u^+\to \C$. We have
 \begin{gather*}
 \Theta_{\beta} = (-1)^{{\rm tr} \beta} q_{\beta} \sum_{b\in B_{\beta}} b^-\otimes b^{*+} \in u_{\beta}^- \otimes u_{\beta}^+.
 \end{gather*}
\end{enumerate}
\end{Theorem}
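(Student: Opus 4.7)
The plan is to reduce the intertwining equation $\Delta(x)\Theta = \Theta\bar\Delta(x)$ to conditions on algebra generators of $u$, and then solve the resulting recursion both abstractly (for uniqueness) and concretely via the Rosso form (for existence).

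First I would test the condition on the group-like generators $K_\mu$ spanning $u^0$. Since $\Delta(K_\mu)=\bar\Delta(K_\mu)=K_\mu \otimes K_\mu$, the equation becomes $(K_\mu \otimes K_\mu)\Theta = \Theta(K_\mu \otimes K_\mu)$. Because $K_\mu$ acts by a character $q^{(\mu,\beta)}$ on the root space $u^\pm_\beta$ and by its inverse on $u^\mp_\beta$, this forces any solution $\Theta$ to decompose as $\sum_\beta \Theta_\beta$ with $\Theta_\beta \in u^-_\beta \otimes u^+_\beta$, which is exactly the ansatz of~(a).

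Next I would test the condition on $x=E_i$ and $x=F_i$. Writing out the coproducts, e.g., $\Delta(F_i)=F_i\otimes K_i^{-1}+1\otimes F_i$ and $\bar\Delta(F_i)=K_i^{-1}\otimes F_i+F_i\otimes 1$, and separating bigraded components, the equation collapses to a recursion expressing $\Theta_\beta$ in terms of $\Theta_{\beta-\alpha_i}$ (using the skew-derivations ${}_ir,r_i$ on $u^\pm$ that encode how $E_j,F_j$ interact with the coproduct). Together with $\Theta_0=1\otimes 1$, this recursion determines $\Theta_\beta$ uniquely by induction on the height of $\beta$, which settles uniqueness in~(a).

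For existence and the explicit formula in (b), the strategy is to verify that the canonical dual-basis element $(-1)^{\operatorname{tr}\beta}q_\beta\sum_{b\in B_\beta} b^-\otimes b^{*+}$ solves the recursion. The key tool here is the non-degenerate pairing $(\,,\,)\colon u^-\otimes u^+\to\C$ together with its coproduct-compatibility identities $(xy,z)=(x\otimes y,\Delta(z))$ and $(x,yz)=(\bar\Delta(x),y\otimes z)$. Applying these lets one translate left/right multiplication by $E_i$ or $F_i$ inside $\Theta_\beta$ into pairings against the appropriate skew-derivations of the dual basis; matching terms grade by grade recovers exactly the recursion from the previous step, with the prefactor $(-1)^{\operatorname{tr}\beta}q_\beta$ emerging naturally from the interplay between the triangular decomposition and the opposite coproduct. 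Basis-independence of $\Theta_\beta$ is automatic once one recognizes that the sum is a canonical dual-basis invariant in $u^-_\beta\otimes u^+_\beta$.

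The main obstacle is precisely this last verification: keeping track of the signs, the $q_\beta$-powers, and how the filtration by weight interacts with the coproduct requires careful bookkeeping of the Hopf-algebraic identities of the pairing, and it is here where the specific form of the prefactor $(-1)^{\operatorname{tr}\beta}q_\beta$ must be pinned down. Once this is done, the rest of the argument is essentially formal.
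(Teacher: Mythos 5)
The paper does not prove this statement at all: it is imported verbatim (``cf.~Theorem~8.2'' of M\"uller's \emph{Some topics on Frobenius--Lusztig kernels~II}), so there is no in-paper proof to compare against. Your outline is, in substance, the standard argument from that source and from Lusztig's \emph{Introduction to Quantum Groups} (Theorem~4.1.2 there): test the intertwining relation on generators, extract a height recursion for $\Theta_\beta$ via the skew-derivations $r_i$, ${}_ir$ to get uniqueness, and then verify that the dual-basis element solves the recursion using the adjointness identities $(xy,z)=(x\otimes y,\Delta(z))$ and $(x,yz)=(\bar\Delta(x),y\otimes z)$ of the pairing. That is the right strategy and, in the finite-dimensional setting of $u_q(\g)$ where the sum over $\beta$ is finite and the form $u^-\otimes u^+\to\C$ is non-degenerate by construction, it goes through.

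Two caveats. First, your claim that commuting with $K_\mu\otimes K_\mu$ \emph{forces} $\Theta_\beta\in u^-_\beta\otimes u^+_\beta$ is too strong: that condition only forces each bihomogeneous component to have total weight zero, not the triangular shape; the shape is part of the ansatz in the theorem statement, so nothing is lost, but the uniqueness you prove is uniqueness \emph{within} that ansatz. Second, you explicitly defer the entire computational content --- the recursion itself, the verification that the dual-basis element satisfies it, and the derivation of the prefactor $(-1)^{\operatorname{tr}\beta}q_\beta$ (which depends on M\"uller's normalization of the pairing and differs from Lusztig's $(-1)^{\operatorname{tr}\beta}$ times a product of $q_i$-factorials absorbed into the dual basis). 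As a proof this is therefore a correct skeleton rather than a complete argument, but it is the same skeleton as in the cited reference.
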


\begin{Theorem}[{cf.\ \cite[Theorem 8.11]{Mue98b+}}]\label{thm:R0}
Let $\Lambda'\subset\{\mu\in\Lambda\,|\, K_{\mu} \text{ central in } u_q(\g,\Lambda)\}$ a subgroup of~$\Lambda$, and $G_1$, $G_2$ subgroups of $G:=\Lambda/\Lambda'$, containing $\Lambda_R/\Lambda'$. In the following, $\mu,\mu_1,\mu_2\in G_1$ and $\nu,\nu_1,\nu_2\in G_2$.

The element $R=R_0\bar\Theta$ with an arbitrary $R_0= \!\sum\limits_{\mu,\nu} f(\mu,\nu) K_{\mu} \otimes K_{\nu}$ is a $R$-matrix for $u_q(\g,\Lambda,\Lambda')$, if and only if for all $\alpha\in \Lambda_R$ and $\mu$, $\nu$ the following holds:
 \begin{gather}
 f(\mu+ \alpha, \nu) = q^{-(\nu, \alpha)} f(\mu,\nu), \qquad f(\mu, \nu+ \alpha) = q^{-(\mu, \alpha)} f(\mu,\nu), \label{f01} \\
 \sum_{\nu_1+\nu_2 = \nu} f(\mu_1,\nu_1)f(\mu_2,\nu_2) = \delta_{\mu_1,\mu_2} f(\mu_1,\nu),\qquad
 \sum_{\mu_1+\mu_2 = \mu} f(\mu_1,\nu_1)f(\mu_2,\nu_2) = \delta_{\nu_1,\nu_2} f(\mu,\nu_1),\nonumber\\ 
 \sum_{\mu} f(\mu,\nu) = \delta_{\nu,0},\qquad \sum_{\nu} f(\mu,\nu) = \delta_{\mu,0}.\nonumber 
 \end{gather}
\end{Theorem}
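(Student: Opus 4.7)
The plan is to translate each of the three universal $R$-matrix axioms — quasi-cocommutativity $R\Delta(x)=\Delta^{op}(x)R$, and the two hexagon identities $(\Delta\otimes\mathrm{id})(R)=R_{13}R_{23}$, $(\mathrm{id}\otimes\Delta)(R)=R_{13}R_{12}$ — into an equivalent condition on $f$, systematically exploiting the known intertwining and hexagon properties of the quasi-$R$-matrix $\bar\Theta$ from Theorem~\ref{thm:Theta}(a). Since $u_q(\g,\Lambda,\Lambda')$ is generated by the group-likes $K_\mu$ and the simple root vectors $E_i$, $F_i$, all three axioms need only be checked on generators (for the first axiom) and then extended by the Hopf algebra compatibility.

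First I would verify quasi-cocommutativity. For $x=K_\mu$ this is automatic since $K_\mu$ is group-like and commutes with every $K_\alpha$; the $\bar\Theta$-part is handled by Theorem~\ref{thm:Theta}(a). For $x=E_i$ we substitute $\Delta(E_i)=E_i\otimes 1+K_{\alpha_i}\otimes E_i$ and use $\bar\Theta\,\Delta(E_i)=\bar\Delta(E_i)\,\bar\Theta$ to commute $E_i$ past $\bar\Theta$; the only piece to reconcile is $R_0\Delta(E_i)=\Delta^{op}(E_i)R_0$, which after expanding $R_0=\sum f(\mu,\nu)K_\mu\otimes K_\nu$ and using $K_\mu E_i K_\mu^{-1}=q^{(\mu,\alpha_i)}E_i$ collapses exactly to the shift relations
\begin{gather*}
f(\mu+\alpha_i,\nu)=q^{-(\nu,\alpha_i)}f(\mu,\nu),\qquad f(\mu,\nu+\alpha_i)=q^{-(\mu,\alpha_i)}f(\mu,\nu).
\end{gather*}
The analogous check with $F_i$ gives the same relations, so (\ref{f01}) is equivalent to quasi-cocommutativity on the generators (extended to all of $\Lambda_R$ by induction on $\alpha$).

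Next I would address the two hexagon identities. Lusztig/M\"uller's analysis of $\bar\Theta$ provides the corresponding identities $(\Delta\otimes\mathrm{id})\bar\Theta=\bar\Theta_{13}\bar\Theta_{23}\cdot(\mathrm{Cartan\ twist})$ and its mirror; combined with the first shift relations on $f$ (which exactly absorb the Cartan twists), the hexagon axioms for the full $R=R_0\bar\Theta$ reduce to the bare statements $(\Delta\otimes\mathrm{id})(R_0)=(R_0)_{13}(R_0)_{23}$ and $(\mathrm{id}\otimes\Delta)(R_0)=(R_0)_{13}(R_0)_{12}$. Expanding both sides in the basis $\{K_{\mu_1}\otimes K_{\mu_2}\otimes K_\nu\}$ and comparing coefficients gives precisely the two convolution identities in the statement. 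The counit normalizations $(\varepsilon\otimes\mathrm{id})R=1=(\mathrm{id}\otimes\varepsilon)R$ likewise reduce to the corresponding identities for $R_0$, since $(\varepsilon\otimes\mathrm{id})\bar\Theta=(\mathrm{id}\otimes\varepsilon)\bar\Theta=1$ because only the summand $\Theta_0=1\otimes 1$ survives the counit. These become the last two sum conditions.

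The main obstacle is the careful bookkeeping in the quasi-cocommutativity step, where the $q$-factors arising from conjugating $E_i$ and $F_i$ past $R_0$ must be matched exactly against those produced by $\bar\Theta\Delta(E_i)=\bar\Delta(E_i)\bar\Theta$; this is where the precise shape of the exponent $q^{-(\nu,\alpha)}$ in (\ref{f01}) (rather than, say, $q^{+(\nu,\alpha)}$) gets pinned down and where the normalization conventions for $\bar\Theta$ enter. A subsidiary point is to verify that the subgroup structure $\Lambda_R/\Lambda'\subseteq G_1,G_2\subseteq G$ is forced: the shift relations (\ref{f01}) guarantee that $\mathrm{supp}(f)$ is stable under translation by $\Lambda_R/\Lambda'$ in either argument, so the convolution identities make sense only when $G_1,G_2$ contain $\Lambda_R/\Lambda'$. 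Once these pieces are assembled, sufficiency follows by reversing each equivalence, and necessity is already built into the step-by-step translation.
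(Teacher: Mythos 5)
The paper gives no proof of this statement: it is imported as \cite[Theorem~8.11]{Mue98b+}, so there is nothing in-paper to compare against. Your outline reproduces the standard argument from M\"uller's paper --- reduce quasi-cocommutativity on the generators $K_\mu$, $E_i$, $F_i$ to the shift relations~(\ref{f01}) via the intertwining property $\Delta(x)\Theta=\Theta\bar\Delta(x)$ of Theorem~\ref{thm:Theta}, reduce the two hexagon axioms to the convolution identities for $R_0$ alone via the quasi-hexagon identities for $\bar\Theta$, and read off the sum conditions from the counit normalization (using $(\varepsilon\otimes\mathrm{id})\bar\Theta=1$) --- and this is the correct strategy. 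The two points where your outline defers the real work are precisely the ones you flag, plus one you understate: besides the bookkeeping of the Cartan corrections in $(\Delta\otimes\mathrm{id})\bar\Theta=\bar\Theta_{13}(\cdots)\bar\Theta_{23}$, the ``only if'' direction needs the observation that equality of elements of $u^{\otimes 2}$ or $u^{\otimes 3}$ forces equality degree-by-degree in the $\Lambda_R$-grading (linear independence of PBW monomials), so that the coefficient identities on the Cartan part can be isolated from the full tensor identities; ``necessity is built into the translation'' is true but only once this separation is justified. Neither point is a wrong idea, but both are where the substance of M\"uller's proof actually lives.
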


\section[Conditions for the existence of $R$-matrices]{Conditions for the existence of $\boldsymbol{R}$-matrices}\label{section3}

\subsection[A f\/irst set of conditions on $\Lambda/\Lambda'$]{A f\/irst set of conditions on $\boldsymbol{\Lambda/\Lambda'}$}\label{section3.1}
The target of our ef\/forts is a Hopf algebra called small quantum group $u_q(\g,\Lambda,\Lambda')$ with Cartan part $u_q^0=\C[\Lambda/\Lambda']$. It is def\/ined, e.g., in~\cite{LN14b} and depends on lattices $\Lambda$, $\Lambda'$ def\/ined below. For $\Lambda=\Lambda_R$ the root lattice and this is the usual small quantum group; the choice of $\Lambda'$ dif\/fers in literature.

In the previous section we have discussed an $R=R_0\bar\Theta$-matrix for the quantum group $u_q(\g,\Lambda,\Lambda')$ can be obtained from an $R_0$-matrix of the form
\begin{gather*}
	R_0= \sum_{\mu,\nu\in \Lambda} f(\mu,\nu) K_{\mu} \otimes K_{\nu}\in \C[\Lambda/\Lambda']\otimes\C[\Lambda/\Lambda'].
\end{gather*}
In the following we collect necessary and suf\/f\/icient conditions for $R=R_0\bar{\Theta}$ to be an $R$-matrix.

\begin{Definition}We f\/ix once-and-for-all a f\/inite-dimensional simple complex Lie algebra $\g$ and a lattice $\Lambda$ between root- and weight-lattice
\begin{gather*}
\Lambda_R\subseteq \Lambda \subseteq \Lambda_W.
\end{gather*}
These choices have a nice geometric interpretation as quantum groups associated	to dif\/ferent Lie groups associated to the Lie algebra $\g$.
\end{Definition}
Another interesting choice is $\Lambda_R \subseteq \Lambda \subseteq \Lambda_W^\vee \cong \Lambda_R^*$, which would below pose no additional complications and may produce further interesting factorizable $R$-matrices.

\begin{Definition} We f\/ix once-and-for-all a primitive $\ell$-th root of unity $q$. For $\Lambda_1,\Lambda_2 \subseteq \Lambda_W^\vee$ we def\/ine the sublattice
	\begin{gather*}
		\operatorname{Cent}_{\Lambda_1}(\Lambda_2):= \{\,\nu \in \Lambda_1 \, | \, (\nu,\mu) \in \ell \cdot \mathbb{Z} \ \forall\, \mu \in \Lambda_2 \}.
	\end{gather*}
	Informally, this is the centralizer with respect to the braiding $q^{-(\nu,\mu)}$.
\end{Definition}

Contrary to \cite{LN14b} we do not f\/ix $\Lambda'$ but we prove later Corollary~\ref{cor:LambdaPrime} that there is a necessary choice for~$\Lambda'$. In this way, we get more solutions than in~\cite{LN14b}. The only condition necessary to ensure that the Hopf algebra $u_q(\g,\Lambda,\Lambda')$ is well-def\/ined is
$\Lambda'\subseteq \operatorname{Cent}_{\Lambda_R}(\Lambda_R)$.

\begin{Theorem}[{cf.\ \cite[Theorem~3.4]{LN14b}}]\label{thm:solutionsgrpeq}
 The $R_0$-matrix is necessarily of the form
 \begin{gather*}
 f(\mu,\nu)=\frac{1}{d|\Lambda_R/\Lambda'|}\cdot q^{-(\mu, \nu)}g(\bar{\mu},\bar{\nu})\delta_{\bar{\mu}\in
 	H_1}\delta_{\bar{\nu}\in H_2},
 \end{gather*}
 where $H_1$, $H_2$ are subgroups of $H:=\Lambda/\Lambda_R \subseteq \pi_1$ with equal cardinality $|H_1|=|H_2|=:d$ $($not necessarily isomorphic!$)$ and $g\colon H_1\times H_2\to\C^{\times}$ is a pairing of groups.
\end{Theorem}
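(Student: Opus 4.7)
The plan is to push the equations in (\ref{f01}) down to the finite abelian group $A := \Lambda/\Lambda_R$ and then extract the subgroup-and-pairing structure by Fourier analysis on $A$.

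First, the two quasi-invariance identities in (\ref{f01}) are exactly what is needed to make
\[
\tilde g(\bar\mu,\bar\nu) := q^{(\mu,\nu)}\, f(\mu,\nu)
\]
independent of the chosen lifts of $\bar\mu,\bar\nu \in A$: shifting $\mu$ by $\alpha \in \Lambda_R$ multiplies $q^{(\mu,\nu)}$ by $q^{(\alpha,\nu)}$ while multiplying $f(\mu,\nu)$ by $q^{-(\nu,\alpha)}$, and the two factors cancel; symmetric reasoning handles $\nu$. Thus one can rewrite $f(\mu,\nu) = q^{-(\mu,\nu)}\,\tilde g(\bar\mu,\bar\nu)$ for a well-defined $\tilde g\colon A \times A \to \C$.

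Next, I would substitute this form into the convolution and summation conditions. Taking $\mu_1=\mu_2$ in the first convolution and collapsing the sum over the fibres of $\Lambda/\Lambda' \twoheadrightarrow A$, which contributes a factor $N := |\Lambda_R/\Lambda'|$, yields
\[
N \cdot \bigl(\tilde g(\bar\mu,\,\cdot\,) \star \tilde g(\bar\mu,\,\cdot\,)\bigr)(\bar\nu) \;=\; \tilde g(\bar\mu,\bar\nu)
\]
on the finite group $A$. Fourier transforming in the second variable, the pointwise square of $N\,\widehat{\tilde g(\bar\mu,\,\cdot\,)}$ equals itself, so that Fourier transform is $\tfrac{1}{N}$ times the indicator of a subset of $\hat A$; the symmetric argument on the other convolution does the same in the first variable. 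The summation conditions $\sum_\nu f(\mu,\nu)=\delta_{\mu,0}$ and $\sum_\mu f(\mu,\nu)=\delta_{\nu,0}$, after the analogous descent and using the vanishing of $\sum_{\beta \in \Lambda_R/\Lambda'} q^{-(\mu,\beta)}$ outside the relevant centralizer, force these subsets to be honest subgroups and constrain the support of $\tilde g$ to a product $H_1 \times H_2$ with $H_1,H_2 \le A$.

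Finally, the off-diagonal convolution identity (with $\mu_1 \ne \mu_2$), combined with the established support, forces $\tilde g$ restricted to $H_1 \times H_2$ to be multiplicative in each argument separately, i.e.\ of the shape $\tfrac{1}{dN}\,g(\bar\mu,\bar\nu)$ with $g\colon H_1 \times H_2 \to \C^{\times}$ a bicharacter; comparing the normalisations extracted from the $\mu$- and the $\nu$-convolution identities forces $|H_1|=|H_2|=:d$. The main obstacle is purely bookkeeping: one must carefully separate the convolution on $\Lambda/\Lambda'$ into the convolution on $A$ times the fibre-counting factor $N$, and rule out mixed solutions in which $\tilde g$ could fail to factor as an indicator of a product of subgroups times a bicharacter. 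Once the Fourier picture is set up, however, both facts are direct consequences of the identity $x^2=x$ for indicator-type functions on a finite abelian group.
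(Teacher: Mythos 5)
Your reduction to the finite quotient and the Fourier set-up are sound as far as they go: the quasi-invariance identities in~(\ref{f01}) do make $q^{(\mu,\nu)}f(\mu,\nu)$ descend, the diagonal convolution identity does make each row (and each column) of $f$ an idempotent under convolution, hence a normalized sum of characters over some subset $S_\mu$ of the dual group; the off-diagonal identity gives pairwise disjointness of the $S_\mu$, and the two summation conditions give $\bigcup_\mu S_\mu=\hat G$ together with $\mathrm{triv}\in S_\mu\Leftrightarrow\mu=0$. But this is where the easy part ends, and your final paragraph asserts precisely the hard statement without proving it. Knowing that every row of $f$ is a normalized sum of characters over a partition of $\hat G$, and simultaneously that every column is too, does \emph{not} follow ``from $x^2=x$ for indicator-type functions'': nothing in that identity forces the set of non-empty parts $\{\mu\,:\,S_\mu\neq\varnothing\}$ to be a subgroup, nor each $S_\mu$ to be a single coset of a fixed annihilator, nor the restriction of $f$ to its support to be bimultiplicative. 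For instance, a row supported on a subgroup $H_2$ which is a sum of distinct characters of $A$ merely restricts on $H_2$ to a sum of distinct characters of $H_2$ --- the same problem one level down, not a single character.

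This missing step is exactly what the paper flags as the core difficulty: immediately after the theorem the authors state that the necessity of this form, \emph{in particular that the support of $f$ is a subgroup}, ``amounts to a combinatorial problem of its own interest,'' solved in \cite{LN14a} only for $\pi_1$ cyclic (via a genuine number-theoretic input about when sums of distinct roots of unity can be non-negative integers) and for $\Z_2\times\Z_2$ by hand; Section~\ref{section7} repeats that a closed proof for general abelian groups is open. The present paper does not reprove the theorem but cites \cite{LN14b}. So your proposal reproduces the standard reduction to that combinatorial principle but leaves the actual content of the theorem --- subgroup support, bimultiplicativity of $g$, and only then the equality $|H_1|=|H_2|$ --- unestablished; as written it would close only in the cases the authors already treat by hand.
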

The necessity of this form (in particular that the support of $f$ is indeed a subgroup!) amounts to a combinatorial problem of its own interest, which we solved for $\pi_1$ cyclic in \cite{LN14a} and for $\Z_2\times \Z_2$ by hand; a closed proof for all abelian groups would be interesting.

\begin{Definition}	Let $g\colon G \times H \to \mathbb{C}^\times$ be a f\/inite group pairing, then the \emph{left radical} is def\/ined as
	\begin{gather*}
		\operatorname{Rad}_L(g):=\{ \lambda \in G \,|\, g(\lambda,\eta)=1 \, \forall \, \eta \in H \}.
	\end{gather*}
	Similarly, the right radical is def\/ined as
	\begin{gather*}
		\operatorname{Rad}_R(g):=\{ \eta \in H \,|\, g(\lambda,\eta)=1 \, \forall \, \lambda \in G \}.
	\end{gather*}
	The pairing $g$ is called \emph{non-degenerate} if $\operatorname{Rad}_L(g)=0$. If in addition $\operatorname{Rad}_R(g)=0$, $g$ is called \emph{perfect}.
\end{Definition}

For an $R_0$-matrix of this form, a suf\/f\/icient condition is that they fulf\/ill the so-called \emph{diamond-equations} (see \cite[Def\/inition~2.7]{LN14b}) for each element $0\neq \zeta\in (\operatorname{Cent}(\Lambda_R)\cap\Lambda)/\Lambda'$.

However, we will now go into a dif\/ferent, more systematic direction that makes use of the following observation:
\begin{Lemma}\label{lm:NondegGroupPairing}
An $R_0$-matrix of the form given in Theorem~{\rm \ref{thm:solutionsgrpeq}} is a solution to the equations in Theorem~{\rm \ref{thm:R0}}, and hence produces an $R$-matrix $R_0\bar{\Theta}$ iff the restriction to the support
	\begin{gather*}
		\hat{f}:=d |\Lambda_R/\Lambda'|\cdot f\colon \ G_1 \times G_2 \to \C^\times
	\end{gather*}
	is a \emph{perfect} group pairing, where $G_i:=\Lambda_i/\Lambda' \subseteq \Lambda/\Lambda'=:G$.
\end{Lemma}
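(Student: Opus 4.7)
The plan is to reduce each equation in Theorem~\ref{thm:R0} to a statement about $\hat f$ via character orthogonality on the finite abelian groups $G_1$, $G_2$. The form given in Theorem~\ref{thm:solutionsgrpeq} was built so that the two covariance equations~(\ref{f01}) hold automatically: adding $\alpha\in\Lambda_R$ to $\mu$ multiplies $q^{-(\mu,\nu)}$ by $q^{-(\alpha,\nu)}$, while $g(\bar\mu,\bar\nu)$ and the indicators $\delta_{\bar\mu\in H_1}\delta_{\bar\nu\in H_2}$ are insensitive to such shifts, and symmetrically for the second argument. So only the four remaining equations are at stake.

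The first observation is that $\hat f(\mu,\nu)=q^{-(\mu,\nu)}g(\bar\mu,\bar\nu)$ is bimultiplicative on $\Lambda_1\times\Lambda_2$ (being the product of the two bicharacters $q^{-(\cdot,\cdot)}$ and $g$), hence a group pairing; its descent to $G_1\times G_2$ is exactly the content of saying $\Lambda'$ lies in the relevant centralizer, a hypothesis built into the statement. I would then rewrite the equation $\sum_\mu f(\mu,\nu)=\delta_{\nu,0}$ as
\[
\sum_{\mu\in G_1}f(\mu,\nu) \;=\; \frac{\delta_{\nu\in G_2}}{d|\Lambda_R/\Lambda'|}\sum_{\mu\in G_1}\hat f(\mu,\nu).
\]
Since $\hat f(\cdot,\nu)$ is a character of the finite abelian group $G_1$ with $|G_1|=d|\Lambda_R/\Lambda'|$, character orthogonality makes the right-hand side equal to $1$ precisely when $\nu\in\operatorname{Rad}_R(\hat f)$, and $0$ otherwise. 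Demanding equality with $\delta_{\nu,0}$ for every $\nu$ is therefore equivalent to $\operatorname{Rad}_R(\hat f)=0$, and the symmetric equation forces $\operatorname{Rad}_L(\hat f)=0$. Together this is exactly perfection of $\hat f$.

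For the convolution equation I would use bimultiplicativity to factor
\[
\hat f(\mu_1,\nu_1)\hat f(\mu_2,\nu_2)=\hat f(\mu_1,\nu)\,\hat f(\mu_2-\mu_1,\nu_2)
\]
whenever $\nu_1+\nu_2=\nu$, so that
\[
\sum_{\nu_1+\nu_2=\nu}f(\mu_1,\nu_1)f(\mu_2,\nu_2)=\frac{\hat f(\mu_1,\nu)\delta_{\nu\in G_2}}{(d|\Lambda_R/\Lambda'|)^2}\sum_{\nu_2\in G_2}\hat f(\mu_2-\mu_1,\nu_2).
\]
Character orthogonality on $G_2$ collapses the inner sum to $|G_2|\,\delta_{\mu_2-\mu_1\in\operatorname{Rad}_L(\hat f)}$; under perfection this becomes $|G_2|\,\delta_{\mu_1,\mu_2}$, and after absorbing the prefactors one recovers $\delta_{\mu_1,\mu_2}f(\mu_1,\nu)$ as required. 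Conversely, if the convolution equation holds then specializing $\nu=0$ and picking $\mu_2-\mu_1$ in a hypothetical nontrivial left radical produces a contradiction, so this equation on its own also forces $\operatorname{Rad}_L(\hat f)=0$. The second convolution equation is handled symmetrically.

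The main obstacle is purely bookkeeping: keeping the normalization factors $d$, $|\Lambda_R/\Lambda'|$, $|G_1|=|G_2|=d|\Lambda_R/\Lambda'|$ and the support indicators consistent, and verifying cleanly that $\hat f$ descends from $\Lambda_1\times\Lambda_2$ to $G_1\times G_2$ so that the character orthogonality step is applicable. Once those pieces are lined up, both directions of the equivalence fall out of standard orthogonality for characters of finite abelian groups and no further combinatorial input is needed.
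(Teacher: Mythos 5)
Your proposal is correct and follows essentially the same route as the paper: the covariance equations hold by construction, and the remaining equations are reduced via character orthogonality on $G_1$, $G_2$ to the vanishing of the left and right radicals of $\hat f$, with the converse read off from the equation $\sum_\mu f(\mu,\nu)=\delta_{\nu,0}$ (the paper additionally invokes $|G_1|=|G_2|$ to pass from non-degenerate to perfect, exactly as you do). Your treatment of the convolution equations via bimultiplicativity is just a more explicit version of the orthogonality argument the paper states in one line.
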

\begin{proof}
	We f\/irst show that a solution with restriction to the support a nondegenerate pairing solves the equation.
	
The f\/irst equations are obviously fulf\/illed for the form assumed
\begin{gather*}
	f(\mu+ \alpha, \nu) = q^{-(\nu, \alpha)} f(\mu,\nu),\qquad f(\mu, \nu+ \alpha) = q^{-(\mu, \alpha)} f(\mu,\nu).
\end{gather*}
For the other equations the sums get only contributions in the support $\Lambda_1/\Lambda' \times \Lambda_2/\Lambda'$. The quantities $f(\mu,\nu)\cdot d|\Lambda_R/\Lambda'|$ for f\/ixed~$\nu$ (or~$\mu$) are characters on the respective support, and by the assumed non-degeneracy all $\nu\neq 0$ give rise to dif\/ferent nontrivial characters. Then the second and third relations follows from orthogonality of characters. Note that since \smash{$d|\Lambda_R/\Lambda'|=|G_1|=|G_2|$} (equality of the latter was an assumption!) we were able to chose the right normalization.
	
For the other direction assume a solution of the given form to the equations. Then already the third equation shows that no $f(-,\nu)$ may be the trivial character and hence the form on the support is nondegenerate and hence perfect by $|G_1|=|G_2|$.
\end{proof}	

\begin{Corollary}\label{cor:LambdaPrime}
A first consequence of the perfectness of $\hat{f}$ $($i.e., a necessary condition for quasi-triangularity$)$ is
\begin{gather*}
	\operatorname{Cent}_{\Lambda_R}(\Lambda_1)=\operatorname{Cent}_{\Lambda_R}(\Lambda_2)=\Lambda'.
\end{gather*}
This fixes $\Lambda'$ uniquely. Moreover in cases $\Lambda_1\neq \Lambda_2$, which can only happen for~$\g=D_{2n}$, where $\pi_1$ is noncyclic, we get an additional constraint relating $\Lambda_1$, $\Lambda_2$.
\end{Corollary}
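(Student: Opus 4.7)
The plan is to read off $\Lambda'$ directly from the radicals of $\hat{f}$. By Lemma~\ref{lm:NondegGroupPairing} perfectness of $\hat{f}\colon G_1\times G_2\to\C^\times$ is necessary for $R_0\bar\Theta$ to be an $R$-matrix, so I work with both $\operatorname{Rad}_L(\hat f)=0$ and $\operatorname{Rad}_R(\hat f)=0$ from the start.

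First I would establish the easy inclusion $\Lambda'\subseteq \operatorname{Cent}_{\Lambda_R}(\Lambda_i)$ for $i=1,2$. The condition that $f$ descends to a well-defined function on $G\times G=(\Lambda/\Lambda')\times(\Lambda/\Lambda')$ already forces it: for $\lambda\in\Lambda'\subseteq\Lambda_R$, the projection $\bar\lambda$ to $H=\Lambda/\Lambda_R$ is zero, so the factors $g$ and the two $\delta$-conditions are unaffected by the shift $\mu\mapsto\mu+\lambda$, and the invariance $f(\mu+\lambda,\nu)=f(\mu,\nu)$ collapses to $q^{-(\lambda,\nu)}=1$ for every $\nu$ in the support $\Lambda_2$, i.e., $\lambda\in\operatorname{Cent}_{\Lambda_R}(\Lambda_2)$. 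The symmetric argument in the second slot gives $\lambda\in\operatorname{Cent}_{\Lambda_R}(\Lambda_1)$.

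For the reverse inclusion, I would take $\mu\in\operatorname{Cent}_{\Lambda_R}(\Lambda_2)\subseteq\Lambda_R\subseteq\Lambda_1$, so that $\bar\mu\in\Lambda_R/\Lambda'\subseteq G_1$. Since the image of $\mu$ in $H$ is zero, the formula of Theorem~\ref{thm:solutionsgrpeq} simplifies to
\begin{gather*}
\hat f(\bar\mu,\bar\nu)=q^{-(\mu,\nu)}\cdot g(0,\bar\nu|_H)=q^{-(\mu,\nu)},
\end{gather*}
which equals $1$ for every $\nu\in\Lambda_2$ exactly by the centralizer hypothesis on $\mu$. Hence $\bar\mu\in\operatorname{Rad}_L(\hat f)$, and perfectness forces $\bar\mu=0$, i.e., $\mu\in\Lambda'$. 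This yields $\operatorname{Cent}_{\Lambda_R}(\Lambda_2)\subseteq\Lambda'$, and the same argument applied to the right radical gives $\operatorname{Cent}_{\Lambda_R}(\Lambda_1)\subseteq\Lambda'$.

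Combining both directions produces $\Lambda'=\operatorname{Cent}_{\Lambda_R}(\Lambda_1)=\operatorname{Cent}_{\Lambda_R}(\Lambda_2)$, which uniquely determines $\Lambda'$ once the pair $(\Lambda_1,\Lambda_2)$ is fixed. The coincidence of the two centralizers is automatic when $\Lambda_1=\Lambda_2$; when $\Lambda_1\neq\Lambda_2$ it becomes a genuine compatibility condition on the pair. Finally, $\Lambda_1\neq\Lambda_2$ can only occur if $\pi_1$ is non-cyclic (in a cyclic group, subgroups of equal order coincide, and $|H_1|=|H_2|=d$ is built into Theorem~\ref{thm:solutionsgrpeq}), which among the simple types happens only for $\g=D_{2n}$. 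I do not anticipate any real obstacle beyond careful bookkeeping between the two quotients $H=\Lambda/\Lambda_R$ and $G=\Lambda/\Lambda'$.
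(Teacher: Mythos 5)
Your proof is correct and is exactly the argument the paper has in mind: the corollary is stated without proof as ``a first consequence of the perfectness of $\hat{f}$'', and your identification of $\operatorname{Cent}_{\Lambda_R}(\Lambda_2)/\Lambda'$ with $\operatorname{Rad}_L(\hat f)\cap(\Lambda_R/\Lambda')$ (using $g(0,\cdot)=1$) together with the well-definedness of $f$ on $\Lambda/\Lambda'$ supplies precisely the details left implicit. The concluding remarks on uniqueness of $\Lambda'$ and on $\Lambda_1\neq\Lambda_2$ forcing a noncyclic $\pi_1$, hence $\g=D_{2n}$, are also as intended.
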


In our case, the only possibility for $\Lambda_1 \neq \Lambda_2$, s.t.\ $G_1 \cong G_2$ is $\g =D_{2n}$. In this case, we have $\operatorname{Cent}_{\Lambda_R}(\Lambda_W)=\operatorname{Cent}_{\Lambda_R}(\Lambda_R)$ and thus the above condition is always fulf\/illed.

Our main goal for the new approach on quasitriangularity as well as the later modularity is to reduce this non-degeneracy condition for $\hat{f}$ to a non-degeneracy condition for $g$ on $H_1,H_2\subset \pi_1$ that can be checked explicitly.

\subsection{A natural form on the fundamental group}\label{section3.2}

We now def\/ine for each triple $(\Lambda,\Lambda_1,\Lambda_2)$ and each $\ell$th root of unity $q$ a natural pairing $a_\ell$ on the subgroups $H_i:=\Lambda_i/\Lambda_R$ of the fundamental group $\pi_1:=\Lambda_W/\Lambda_R$. The simplest example is $a_\ell=e^{-2\pi i(\mu,\nu)}$. In general it is a transportation of the natural form $q^{-(\mu,\nu)}$ (which does not factorize over $\Lambda_R$) to $H_i$ by a suitable isomorphism $A_\ell$.

This isomorphism $A_\ell$ will encapsulate the crucial dependence on the common divisors of~$\ell$,~$|H|$ and the root lengths $d_i$; moreover, for dif\/ferent $H$ these forms are \emph{not} simply restrictions of one another.

Then, we can moreover transport any given pairing $g$ together with $q^{-(\mu,\nu)}$ along the isomorphism $A_\ell$ to the $H_i$ and thus def\/ine forms $a_\ell^g$ on $H$. The main result of this section is in Theorem~\ref{cor:factorizableAell} that the non-degeneracy condition in Lemma~\ref{lm:NondegGroupPairing} for~$R_0(f)$ depending on~$H_i$, $g$~is equivalent to $a_\ell^g$ being non-degenerate.

\begin{Definition}Let $\Lambda \subseteq \Lambda_W^\vee$ be a sublattice, s.t.\ $\Lambda_R \subseteq \Lambda$. By $\hat{\Lambda} \subset \Lambda_W^\vee$ we denote the unique sub\-lattice, s.t.\ the symmetric bilinear form $(\,\cdot\,,\,\cdot\,)\colon \Lambda_W^\vee \times \Lambda_W^\vee \to \mathbb{Q}$ induces a commuting diagram{\samepage
\begin{equation*}
	\begin{tikzcd}
	\Lambda_R \arrow[hookrightarrow]{r}\arrow{d}{\cong} & \hat{\Lambda} \arrow[hookrightarrow]{r}\arrow{d}{\cong} & \Lambda_W^\vee \arrow{d}{\cong} \\
	{\Lambda_W^\vee}^* \arrow[hookrightarrow]{r} & \Lambda^* \arrow[hookrightarrow]{r} & \Lambda_R^* ,
	\end{tikzcd}
	\end{equation*}
	where $\Lambda^*:= \operatorname{Hom}_\mathbb{Z}(\Lambda,\mathbb{Z})$. In particular, we have $\hat{\Lambda}_R=\Lambda_W^\vee$ and $\hat{\Lambda}_W^\vee=\Lambda_R$.}
\end{Definition}

\begin{Definition} \label{def:matrix_A_l}
	A \emph{centralizer transfer map} is an group endomorphism $A_\ell \in \operatorname{End}_{\mathbb{Z}}(\Lambda)$, s.t.\
	\begin{enumerate}\itemsep=0pt
		\item[1)] $A_\ell(\Lambda) \stackrel{!}{=} \Lambda \cap \ell \cdot \hat{\Lambda}_R=\operatorname{Cent}_{\Lambda}^\ell(\Lambda_R)$,
		\item[2)] $A_\ell(\Lambda_R) \stackrel{!}{=} \Lambda_R \cap \ell \cdot \hat{\Lambda}=\operatorname{Cent}_{\Lambda_R}^\ell(\Lambda)$.
	\end{enumerate}
Such a $A_\ell$ induces a group isomorphism
	\begin{gather*}
	\Lambda/\Lambda_R \stackrel{\sim}{\longrightarrow} \operatorname{Cent}_{\Lambda}^\ell(\Lambda_R)/\operatorname{Cent}_{\Lambda_R}^\ell(\Lambda).
	\end{gather*}
	Of course $A_\ell$ is not unique.
\end{Definition}
\begin{Question}
	Are there abstract arguments for the existence of these isomorphism and for its explicit form?
\end{Question}
We will calculate explicit expressions for $A_\ell$ depending on the cases in the next section. At this point we give the generic answers:
\begin{Example}
	For $\Lambda=\Lambda_W^\vee$ we have $A_\ell=\ell\cdot\operatorname{id}$.

	For $\Lambda=\Lambda_R$ the two conditions are equivalent, so existence is trivial (resp.\ obviously the two trivial groups are isomorphic) and we may simply take for $A_\ell$ any base change between left and right side. The expression may however be nontrivial.
\end{Example}
\begin{Lemma} \label{lm:AellForGCD=1}
	Assume $\gcd(\ell, |\Lambda_W^\vee/\Lambda|)=1$, then $A_\ell=\ell\cdot \operatorname{id}$. In particular this is the case if $\ell$ is prime to all root lengths and all divisors of the Cartan matrix.
	
Moreover if $\ell=\ell_1\ell_2$ with $\gcd(\ell_1, |\Lambda_W^\vee/\Lambda|)=1$, then $A_\ell=\ell_1\cdot A_{\ell_2}$.
	
This means we only have to calculate $A_\ell$ for all divisors $\ell$ of $|\Lambda_W^\vee/\Lambda|$, which is a subset of all divisors of root lengths times divisors of the Cartan matrix.
\end{Lemma}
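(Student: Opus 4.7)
The plan is to verify the two defining conditions of a centralizer transfer map directly for the candidate $A_\ell = \ell \cdot \operatorname{id}$. The whole proof rests on a single observation: if $\gcd(\ell,|\Lambda_W^\vee/\Lambda|)=1$, then multiplication by $\ell$ is injective on $\Lambda_W^\vee/\Lambda$, and by duality also on $\hat{\Lambda}/\Lambda_R$.

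First I would establish the index identity $|\hat{\Lambda}/\Lambda_R| = |\Lambda_W^\vee/\Lambda|$. By the definition of $\hat{\Lambda}$, the bilinear form descends to a perfect pairing
\begin{gather*}
\Lambda_W^\vee/\hat{\Lambda}\;\times\;\Lambda/\Lambda_R \;\longrightarrow\; \mathbb{Q}/\mathbb{Z},
\end{gather*}
so $|\Lambda_W^\vee/\hat{\Lambda}|=|\Lambda/\Lambda_R|$, and multiplicativity of indices in the chain $\Lambda_R \subseteq \hat{\Lambda} \subseteq \Lambda_W^\vee$ gives the claimed equality. In particular the gcd hypothesis passes from $\Lambda_W^\vee/\Lambda$ to $\hat{\Lambda}/\Lambda_R$.

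Next I would check condition (1) of Definition~\ref{def:matrix_A_l}: $\ell \Lambda = \Lambda \cap \ell\Lambda_W^\vee$. The inclusion $\subseteq$ is immediate. For $\supseteq$, write $x = \ell y$ with $y \in \Lambda_W^\vee$; then $\ell \bar y = 0$ in $\Lambda_W^\vee/\Lambda$, and by the coprimality assumption $\bar y = 0$, so $y \in \Lambda$. Condition (2), namely $\ell \Lambda_R = \Lambda_R \cap \ell\hat{\Lambda}$, follows by the identical argument now carried out in $\hat{\Lambda}/\Lambda_R$, whose order is still coprime to $\ell$ by the first step. The ``in particular'' clause then follows from $|\Lambda_W^\vee/\Lambda_R| = \det\bigl((\alpha_i,\alpha_j)\bigr) = \bigl(\prod_i d_i\bigr)\,|\!\det C|$, so any $\ell$ coprime to all $d_i$ and all divisors of $C$ is automatically coprime to $|\Lambda_W^\vee/\Lambda|$.

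For the multiplicativity $A_\ell = \ell_1 \cdot A_{\ell_2}$ when $\ell = \ell_1 \ell_2$ with $\gcd(\ell_1, |\Lambda_W^\vee/\Lambda|)=1$, I would verify the two conditions of Definition~\ref{def:matrix_A_l} for the endomorphism $\ell_1 A_{\ell_2}$. Using $A_{\ell_2}(\Lambda) = \Lambda \cap \ell_2 \Lambda_W^\vee$, one inclusion of $\ell_1(\Lambda \cap \ell_2\Lambda_W^\vee) = \Lambda \cap \ell\Lambda_W^\vee$ is trivial; for the other, given $x = \ell y$ with $y \in \Lambda_W^\vee$, set $z = \ell_2 y$ and note that $\ell_1 \bar z = \bar x = 0$ in $\Lambda_W^\vee/\Lambda$, so the coprimality of $\ell_1$ forces $z \in \Lambda \cap \ell_2 \Lambda_W^\vee$ with $x = \ell_1 z$. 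Condition (2) is the same argument in $\hat{\Lambda}/\Lambda_R$.

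The only non-routine step is the index identity $|\hat{\Lambda}/\Lambda_R| = |\Lambda_W^\vee/\Lambda|$, which lets the coprimality hypothesis, originally phrased on $\Lambda_W^\vee/\Lambda$, also apply to $\hat{\Lambda}/\Lambda_R$; once this bridge is crossed, the remainder is bookkeeping.
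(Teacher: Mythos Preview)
Your proof is correct and follows essentially the same approach as the paper: both verify the two defining conditions of Definition~\ref{def:matrix_A_l} for $\ell\cdot\operatorname{id}$ (respectively $\ell_1\cdot A_{\ell_2}$) by noting that multiplication by $\ell$ (respectively $\ell_1$) is an automorphism of the relevant finite quotient when its order is coprime to the multiplier, and both invoke the index identity $|\hat{\Lambda}/\Lambda_R|=|\Lambda_W^\vee/\Lambda|$ to transfer the coprimality hypothesis to the second condition. Your version supplies more detail than the paper (the perfect-pairing argument for the index identity, and the justification of the ``in particular'' clause via $|\Lambda_W^\vee/\Lambda_R|=\bigl(\prod_i d_i\bigr)\,|\pi_1|$), but the skeleton is the same.
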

\begin{proof}For the f\/irst condition we need to show for any $\lambda\in \Lambda_W^\vee$ that $\ell\lambda\in\Lambda$ already implies $\lambda\in\Lambda$. But if by assumption the order of the quotient group $\Lambda_W^\vee/\Lambda$ is prime to $\ell$, then $\ell\cdot$ is an isomorphism on this abelian group, hence follows the assertion. For the second condition applies the same argument noting that $|\hat{\Lambda}/\Lambda_R|=|\Lambda_W^\vee/\Lambda|$.
	
For the second claim we simply consider the inclusion chains
\begin{gather*}
A_\ell(\Lambda)	\subset \Lambda \cap \ell_2 \cdot \hat{\Lambda}_R\subset \Lambda \cap \ell \cdot \hat{\Lambda}_R,\\
A_\ell(\Lambda_R)\subset \Lambda \cap \ell_2 \cdot \hat{\Lambda}\subset \Lambda_R \cap \ell \cdot \hat{\Lambda},
\end{gather*}
where a f\/irst isomorphism is given by $A_{\ell_2}$ and again $\ell_1\cdot$ is a second isomorphism because it is prime to the index.
\end{proof}	
	
Our main result of this chapter is the following:

\begin{Theorem} \label{thm:radical} Let $\Lambda_R \subseteq \Lambda_1$, $\Lambda_2 \subseteq \Lambda_W$ be intermediate lattices, s.t.\ the condition in Corollary~{\rm \ref{cor:LambdaPrime}} is fulfilled, i.e., $\operatorname{Cent}_{\Lambda_R}(\Lambda_1)=\operatorname{Cent}_{\Lambda_R}(\Lambda_2)=\Lambda'$. Assume we have a centralizer transfer map~$A_\ell$.
	\begin{enumerate}\itemsep=0pt
		\item[$1.$] The following form is well defined on the quotients:
		\begin{align*}
		a_g^\ell\colon \ & \Lambda_1/\Lambda_R \times \Lambda_2/\Lambda_R \longrightarrow \mathbb{C}^\times, \\
		& (\bar{\lambda},\bar{\mu}) \longmapsto q^{-(\lambda,A_\ell(\mu))} \cdot g(\lambda,A_\ell(\mu)).
		\end{align*}
		\item[$2.$] Let
\begin{gather*}
\operatorname{Cent}_{\Lambda_1}^g(\Lambda_2):=\big\{\lambda \in \Lambda_1 \,|\, q^{(\lambda,\mu)}=g(\lambda,\mu) \; \forall\, \mu \in \Lambda_2 \big\}.
\end{gather*}
Then the inclusion $\operatorname{Cent}_{\Lambda_1}^g({\Lambda_2}) \hookrightarrow \Lambda_1$ induces an isomorphism
\begin{gather*}
	\operatorname{Cent}_{\Lambda_1}^g({\Lambda_2})/\Lambda' \cong \operatorname{Rad}\big(a_g^\ell\big).
\end{gather*}
\end{enumerate}
\end{Theorem}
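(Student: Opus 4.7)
For part~(1), well-definedness of $a_g^\ell$ on the quotients follows directly from the two defining properties of $A_\ell$. When $\lambda\in\Lambda_R$ we have $\bar\lambda=0$, so $g(\bar\lambda,-)=1$, while property~(1) places $A_\ell(\mu)\in\operatorname{Cent}_\Lambda^\ell(\Lambda_R)$, forcing $(\lambda,A_\ell(\mu))\in\ell\Z$ and hence $q^{-(\lambda,A_\ell(\mu))}=1$. Dually, when $\mu\in\Lambda_R$ property~(2) gives $A_\ell(\mu)\in\operatorname{Cent}_{\Lambda_R}^\ell(\Lambda)\subseteq\Lambda_R$, so $\overline{A_\ell(\mu)}=0$ trivializes the $g$-factor, and $(\lambda,A_\ell(\mu))\in\ell\Z$ follows from the defining property of $\operatorname{Cent}_{\Lambda_R}^\ell(\Lambda)$.

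For part~(2), the plan is to use the natural projection $\phi\colon\operatorname{Cent}_{\Lambda_1}^g(\Lambda_2)\to\Lambda_1/\Lambda_R$, $\lambda\mapsto\bar\lambda$. The kernel is quickly identified: for $\lambda\in\Lambda_R$ the centralizer condition collapses to $q^{(\lambda,\mu)}=g(0,\bar\mu)=1$ for all $\mu\in\Lambda_2$, so $\ker\phi=\operatorname{Cent}_{\Lambda_R}(\Lambda_2)=\Lambda'$ by the standing hypothesis of Corollary~\ref{cor:LambdaPrime}. This gives an injection $\bar\phi\colon\operatorname{Cent}_{\Lambda_1}^g(\Lambda_2)/\Lambda'\hookrightarrow\Lambda_1/\Lambda_R$. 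Containment of $\operatorname{image}(\bar\phi)\subseteq\operatorname{Rad}(a_g^\ell)$ is direct: if $\lambda$ centralizes $g$ against $\Lambda_2$, substitute $\mu=A_\ell(\nu)\in\Lambda_2$ (using that $A_\ell$ preserves $\Lambda_2$) into $q^{(\lambda,\mu)}=g(\bar\lambda,\bar\mu)$ to read off $a_g^\ell(\bar\lambda,\bar\nu)=1$.

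The main content is surjectivity. Given $\bar\lambda\in\operatorname{Rad}(a_g^\ell)$, I pick any lift $\lambda\in\Lambda_1$ and form the character $\psi\colon\Lambda_2\to\C^\times$, $\psi(\mu):=g(\bar\lambda,\bar\mu)\cdot q^{-(\lambda,\mu)}$. The radical hypothesis exactly says $\psi(A_\ell(\nu))=1$ for all $\nu\in\Lambda_2$, so $\psi$ factors through a character $\bar\psi\colon\Lambda_2/A_\ell(\Lambda_2)\to\mu_\ell$. It then suffices to write $\bar\psi=q^{(\alpha,\cdot)}$ for some $\alpha\in\Lambda_R$, because $\lambda-\alpha$ will then lie in $\operatorname{Cent}_{\Lambda_1}^g(\Lambda_2)$ and project to $\bar\lambda$. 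For this I would exhibit the auxiliary pairing $\Lambda_R/\Lambda'\times\Lambda_2/A_\ell(\Lambda_2)\to\mu_\ell$, $(\alpha,\mu)\mapsto q^{(\alpha,\mu)}$, and show it is perfect: its left kernel is $\operatorname{Cent}_{\Lambda_R}(\Lambda_2)=\Lambda'$, while its right kernel is $\operatorname{Cent}_{\Lambda_2}^\ell(\Lambda_R)/A_\ell(\Lambda_2)$, which vanishes precisely when $A_\ell(\Lambda_2)=\operatorname{Cent}_{\Lambda_2}^\ell(\Lambda_R)$.

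This last identity is the main technical obstacle. It is the natural analogue of property~(1) of Definition~\ref{def:matrix_A_l} for the intermediate lattice $\Lambda_2$ in place of $\Lambda$, and it is automatic at the extremes $\Lambda_2=\Lambda$ or $\Lambda_2=\Lambda_R$, as well as whenever $A_\ell$ is scalar multiplication as in Lemma~\ref{lm:AellForGCD=1}. For more general intermediate $\Lambda_2$ one would either strengthen Definition~\ref{def:matrix_A_l} to demand this identity for every sublattice between $\Lambda_R$ and $\Lambda$ (the conditions remain mutually compatible), or verify it against the explicit centralizer transfer maps constructed case-by-case in Section~\ref{section4}. I expect no serious trouble, because in essentially all cases of genuine interest (other than $D_{2n}$ with $\Lambda_1\neq\Lambda_2$) one takes $\Lambda_1=\Lambda_2=\Lambda$, where the required identity is tautological.
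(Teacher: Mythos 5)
Your proposal is correct and is in substance the same duality argument as the paper's, just unpacked element-wise. The paper organizes the proof as a commutative ladder of two short exact sequences, $0\to\Lambda_R/\Lambda'\to\Lambda_1/\Lambda'\to\Lambda_1/\Lambda_R\to 0$ mapping via $q^{-(\cdot,\cdot)}$, $\hat{f}$ and an induced $\hat{f}'$ to the dual of $0\to\operatorname{Cent}_{\Lambda_2}(\Lambda_R)/\Lambda'\to\Lambda_2/\Lambda'\to\Lambda_2/\operatorname{Cent}_{\Lambda_2}(\Lambda_R)\to 0$, and then invokes the five/snake lemma: since the left vertical is an isomorphism, $\ker\hat{f}=\operatorname{Cent}_{\Lambda_1}^g(\Lambda_2)/\Lambda'$ is carried isomorphically to $\ker\hat{f}'$, which after composing with $A_\ell^\wedge$ becomes $\operatorname{Rad}\big(a_g^\ell\big)$. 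Your injectivity step is exactly the statement that the left vertical has trivial kernel (namely $\operatorname{Cent}_{\Lambda_R}(\Lambda_2)=\Lambda'$), your auxiliary perfect pairing $\Lambda_R/\Lambda'\times\Lambda_2/A_\ell(\Lambda_2)\to\C^\times$ is exactly the statement that this left vertical is surjective after identifying $\Lambda_2/\operatorname{Cent}_{\Lambda_2}(\Lambda_R)$ with $\Lambda_2/A_\ell(\Lambda_2)$, and your explicit construction of the correction $\alpha\in\Lambda_R$ replaces the diagram chase. The "main technical obstacle" you flag, $A_\ell(\Lambda_2)=\operatorname{Cent}^\ell_{\Lambda_2}(\Lambda_R)$ (together with $A_\ell(\Lambda_2)\subseteq\Lambda_2$), is not an extra cost of your route: the paper needs precisely the same fact when it asserts that $A_\ell^\wedge\colon\big(\operatorname{Cent}_{\Lambda_2}(\Lambda_R)/\Lambda'\big)^\wedge\cong(\Lambda_2/\Lambda_R)^\wedge$ is an isomorphism, and (as in the statement "Assume we have a centralizer transfer map $A_\ell$") it is left as an implicit property of $A_\ell$ relative to the chosen intermediate lattices, verified only through the case-by-case constructions of Section~\ref{section4}. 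So your caution here is warranted, but it does not put your argument behind the published one; if anything, making this hypothesis explicit is an improvement.
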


\begin{Corollary}\label{cor:factorizableAell}
The quasitriangularity conditions for a choice $R_0$ are by Lemma~{\rm \ref{lm:NondegGroupPairing}} equivalent to the non-degeneracy of the group pairing on $\Lambda_1/\Lambda' \times \Lambda_2/\Lambda'$:
	\begin{gather*}
		\hat{f}(\lambda,\mu)=q^{-(\lambda,\mu)} g(\lambda,\mu).
	\end{gather*}
By the previous theorem this condition is now equivalent to the nondegeneracy of~$ a_g^\ell$.
\end{Corollary}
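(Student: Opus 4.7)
The plan is to simply chain together the two preceding results. By Lemma~\ref{lm:NondegGroupPairing} the quasitriangularity of $R=R_0\bar\Theta$ is equivalent to $\hat f\colon G_1\times G_2\to \C^\times$ being a \emph{perfect} pairing, where $G_i=\Lambda_i/\Lambda'$. Under the standing hypothesis $\operatorname{Cent}_{\Lambda_R}(\Lambda_1)=\operatorname{Cent}_{\Lambda_R}(\Lambda_2)=\Lambda'$ from Corollary~\ref{cor:LambdaPrime}, the expression $\hat f(\lambda,\mu)=q^{-(\lambda,\mu)}g(\bar\lambda,\bar\mu)$ descends to $G_1\times G_2$, and the cardinalities $|G_1|=|G_2|$ agree (this was already built into Theorem~\ref{thm:solutionsgrpeq}). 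Hence it suffices to check that the \emph{left radical} of $\hat f$ is trivial.

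The first key step is to identify this left radical explicitly. By definition, $\lambda\in\Lambda_1$ lies in the left radical of $\hat f$ (modulo $\Lambda'$) precisely when $q^{-(\lambda,\mu)}g(\bar\lambda,\bar\mu)=1$ for every $\mu\in\Lambda_2$, which is exactly the condition $g(\bar\lambda,\bar\mu)=q^{(\lambda,\mu)}$ defining $\operatorname{Cent}_{\Lambda_1}^g(\Lambda_2)$. Thus
\begin{gather*}
\operatorname{Rad}_L\bigl(\hat f\bigr) \;=\; \operatorname{Cent}_{\Lambda_1}^g(\Lambda_2)/\Lambda'.
\end{gather*}

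The second step is to invoke Theorem~\ref{thm:radical}(2), which provides precisely the isomorphism
\begin{gather*}
\operatorname{Cent}_{\Lambda_1}^g(\Lambda_2)/\Lambda' \;\cong\; \operatorname{Rad}\bigl(a_g^\ell\bigr).
\end{gather*}
Combining the two displays, $\operatorname{Rad}_L(\hat f)\cong\operatorname{Rad}(a_g^\ell)$, so $\hat f$ has trivial left radical iff $a_g^\ell$ is non-degenerate. Finally, since $|G_1|=|G_2|$, a finite group pairing is perfect as soon as one of the two radicals vanishes (the other is then forced to vanish by a counting/character-theoretic argument), which upgrades non-degeneracy of $\hat f$ to perfectness.

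I do not anticipate a serious obstacle here, since the corollary is really just a bookkeeping consequence of the two earlier results. The one subtlety to watch is making sure the quotient by $\Lambda_R$ used to define $a_g^\ell$ (on $\Lambda_i/\Lambda_R$) is compatible with the quotient by $\Lambda'$ used for $\hat f$ (on $\Lambda_i/\Lambda'$); this is taken care of by the inclusion $\Lambda_R/\Lambda'\subseteq G_i$ built into the setup of Theorem~\ref{thm:R0}, and it is exactly what Theorem~\ref{thm:radical}(2) already accounts for on the right-hand side of its isomorphism.
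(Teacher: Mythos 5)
Your proposal is correct and follows essentially the same route as the paper, which states this corollary without a separate proof precisely because it is the chain Lemma~\ref{lm:NondegGroupPairing} $\Rightarrow$ (perfectness of $\hat f$) $\Rightarrow$ (via Theorem~\ref{thm:radical}(2), identifying $\operatorname{Rad}_L\big(\hat f\big)$ with $\operatorname{Cent}_{\Lambda_1}^g(\Lambda_2)/\Lambda'\cong\operatorname{Rad}\big(a_g^\ell\big)$) that you spell out. Your explicit identification of the left radical and the counting argument upgrading non-degeneracy to perfectness are exactly the details the authors leave implicit.
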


This condition on the fundamental group, which is a f\/inite abelian group and mostly cyclic, can be checked explicitly once $a_g^\ell$ has been calculated.

\begin{proof}[Proof of Theorem~\ref{thm:radical}]	The f\/irst part of the theorem is a direct consequence of the def\/inition of the centralizer transfer matrix~$A_\ell$. For the second part, we f\/irst notice that by assumption we have a commutative diagram of f\/inite abelian groups
	\begin{equation*}
		\begin{tikzcd}
		\Lambda_R/\Lambda' \arrow[hookrightarrow]{r}\arrow{d}{q^{-(\cdot,\cdot)}} &\Lambda_1/\Lambda' \arrow[twoheadrightarrow]{r}\arrow{d}{\hat{f}} &\Lambda_1/\Lambda_R \arrow{d}{\hat{f}'}\\
		\left(\Lambda_2/\operatorname{Cent}_{\Lambda_2}(\Lambda_R) \right)^\wedge \arrow[hookrightarrow]{r}& (\Lambda_2/\Lambda')^\wedge \arrow[twoheadrightarrow]{r}& \left(\operatorname{Cent}_{\Lambda_2}(\Lambda_R) /\Lambda' \right)^\wedge,
		\end{tikzcd}
	\end{equation*}
where $G^\wedge$ denotes the dual group of a group $G$.

Now, by the f\/ive lemma we know that $\hat{f}$ is an isomorphism if and only if the induced map~$\hat{f}'$ is an isomorphism. Post-composing this map with the dualized centralizer transfer matrix $A_\ell^\wedge\colon (\operatorname{Cent}_{\Lambda_2}(\Lambda_R) /\Lambda' )^\wedge \cong (\Lambda_2/\Lambda_R)^\wedge$ gives~$a_g^\ell$.
\end{proof}

\section[Explicit calculation for every $\g$]{Explicit calculation for every $\boldsymbol{\g}$}\label{section4}
In the following, we want to compute the endomorphism $A_\ell \in \operatorname{End}_\mathbb{Z}(\Lambda)$ and the pairing~$a_\ell$ on the fundamental group explicitly in terms of the Cartan matrices and the common divisors of $\ell$ with root lengths and divisors of the Cartan matrix. We will f\/inally give a list for all~$\g$.

\subsection{Technical tools}\label{section4.1} We choose the basis of simple roots $\alpha_i$ for $\Lambda_R$ and the dual basis of fundamental coweights $\lambda_i^\vee$ for the dual lattice $\Lambda_W^\vee$ with $(\alpha_i,\lambda_j^\vee)=\delta_{i,j}$.

For any choice $\Lambda\subset \Lambda_W\subset \Lambda_W^\vee$, let $A_\Lambda$ be a \emph{basis matrix}, i.e., any $\Z$-linear isomorphism $\Lambda_W^\vee\to \Lambda$ sending the basis $\lambda_i^\vee$ of $\Lambda_W^\vee$ to some basis $\mu_i$ of $\Lambda$. It is unique up to pre-composition of a unimodular matrix $U \in \mathrm{SL}_n(\mathbb{Z})$.

The dual basis $A_{\hat{\Lambda}}$ of $\hat{\Lambda}$ is def\/ined by
\begin{gather*}
	\big(A_{\hat{\Lambda}}\big(\lambda_i^\vee\big),A_{\Lambda}\big(\lambda_j^\vee\big)\big)= \delta_{ij}.
\end{gather*}
Explicitly, $A_{\hat{\Lambda}}$ is given by $ A_{\hat{\Lambda}} = \big(A_{\Lambda}^{-1}A_R\big)^T,$ where $(A_R)_{ij}=(\alpha_i,\alpha_j)$. Now, let $A_\Lambda=P_\Lambda S_\Lambda Q_\Lambda$ be the unique Smith decomposition of $A_\Lambda$, which means: $P_\Lambda$, $Q_\Lambda$ are unimodular and $S_\Lambda$ is diagonal with diagonal entries $(S_\Lambda)_{ii}=:d^\Lambda_i$, such that $d^\Lambda_i \,|\, d^\Lambda_j$ for $i<j$.
\begin{Example}\label{ex:ExplicitFormOfSNF}
	For the root lattice the $d^{\Lambda_R}_i$ are the divisors of scalar product matrix $(\alpha_i,\alpha_j)$. Their product is
\begin{gather*}
	\prod_i d^{\Lambda_R}_i=\big|\Lambda_W^\vee/\Lambda_R\big|= \Big( \prod_i d_i\Big) \cdot |\pi_1|, \qquad d_i=\frac{(\alpha_i,\alpha_i)}{2}.
\end{gather*}	
	For the coweight lattice all $d^{\Lambda_{W}^\vee}_i=1$. For the weight lattice we recover the familiar $d^{\Lambda_{W}}_i=d_i$.
\end{Example}
Without loss of generality, we will assume the basis matrices $A_\Lambda$ to be symmetric, i.e., \smash{$Q_\Lambda = P_\Lambda^T$}. We then have the following lemma:
\begin{Lemma}\label{lm:ExplicitFormOfCentralizers} Let $\Lambda_R \subseteq \Lambda \subseteq \Lambda_W^\vee$ be a lattice. We define lattices
\begin{gather*}
A_{\rm Cent}:= \big(P_\Lambda^T\big)^{-1} D_\ell P_{\Lambda}^{-1}, \qquad D_\ell:= \operatorname{Diag}\left( \frac{\ell}{\operatorname{gcd}\big(\ell,d_i^\Lambda\big)}\right).
\end{gather*}
Then,
\begin{gather*}
\operatorname{Cent}_{\Lambda_R}(\Lambda)=A_RA_{\rm Cent}\Lambda_W^\vee, \qquad \operatorname{Cent}_{\Lambda}(\Lambda_R)=A_\Lambda A_{\rm Cent}\Lambda_W^\vee.
\end{gather*}
\end{Lemma}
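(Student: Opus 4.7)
The plan is to pass to coordinates via the basis $\{\lambda_i^\vee\}$ of $\Lambda_W^\vee$, collapse both centralizer conditions into a single integer linear congruence on coordinate vectors, and then solve that congruence via the Smith normal form of $A_\Lambda$. Concretely, I identify $\Lambda_W^\vee$ with $\Z^n$ by sending $\lambda_i^\vee$ to the $i$-th standard basis vector, so that the inclusions $\Lambda,\Lambda_R\hookrightarrow\Lambda_W^\vee$ are realized as multiplication by $A_\Lambda$ and $A_R$. The restriction of $(\,\cdot\,,\,\cdot\,)$ to $\Lambda_W^\vee\times\Lambda_W^\vee$ is then the $\mathbb{Q}$-valued bilinear form $\langle x,y\rangle=x^T A_R^{-1}y$, as one reads off from the duality $(\lambda_i^\vee,\alpha_j)=\delta_{ij}$ together with the expansion $\alpha_j=\sum_i(A_R)_{ij}\lambda_i^\vee$.

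The main observation is that in these coordinates both centralizers are cut out by the \emph{same} congruence on the coefficient vector $c\in\Z^n$. For $\nu=A_\Lambda c\in\Lambda$, the condition $(\nu,\alpha_k)\in\ell\Z$ for every simple root reads $(A_\Lambda c)_k\in\ell\Z$, i.e., $A_\Lambda c\equiv 0\pmod\ell$. For $\nu=A_R c\in\Lambda_R$, the condition $(\nu,\mu)\in\ell\Z$ for every $\mu=A_\Lambda z\in\Lambda$ unwinds via
\begin{gather*}
	(A_R c)^T A_R^{-1} A_\Lambda z = c^T A_\Lambda z;
\end{gather*}
using the symmetry of $A_\Lambda$ (the WLOG choice $Q_\Lambda=P_\Lambda^T$) and quantifying over $z\in\Z^n$ yields again $A_\Lambda c\equiv 0\pmod\ell$. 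So the two problems share an identical solution set for $c$, and the two formulas in the Lemma will arise by subsequent left-multiplication with $A_\Lambda$ and with $A_R$ respectively.

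It remains to solve $A_\Lambda c\equiv 0\pmod\ell$. Substituting the symmetric Smith decomposition $A_\Lambda=P_\Lambda S_\Lambda P_\Lambda^T$ and exploiting the unimodularity of $P_\Lambda$, the change of variable $c'=P_\Lambda^T c$ reduces the congruence to the diagonal system $d_i^\Lambda c_i'\equiv 0\pmod\ell$, equivalently $c_i'\in(\ell/\gcd(\ell,d_i^\Lambda))\,\Z$, i.e., $c'\in D_\ell\Z^n$. Undoing the unimodular substitution, the admissible coefficient vectors form precisely
\begin{gather*}
	c \in (P_\Lambda^T)^{-1} D_\ell \Z^n = (P_\Lambda^T)^{-1} D_\ell P_\Lambda^{-1} \Z^n = A_{\operatorname{Cent}} \Z^n,
\end{gather*}
so applying $A_\Lambda$ and $A_R$ respectively gives the two displayed equalities. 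The only step that requires genuine care is the reduction in the second paragraph, where the Gram matrix $A_R^{-1}$ and the symmetry of $A_\Lambda$ must conspire to collapse the two centralizer conditions into the identical congruence; once that is in place, what remains is a routine Smith-normal-form calculation.
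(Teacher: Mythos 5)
Your proof is correct. It takes a mildly different route from the paper's: the paper first rewrites the two centralizers as intersections with scaled dual lattices, $\operatorname{Cent}_{\Lambda_R}(\Lambda)=\Lambda_R\cap\ell\hat\Lambda$ and $\operatorname{Cent}_{\Lambda}(\Lambda_R)=\Lambda\cap\ell\Lambda_W^\vee$, and then evaluates each intersection separately by pulling the unimodular factors of the Smith decomposition out of the intersection and using $S_\Lambda\Lambda_W^\vee\cap\ell\Lambda_W^\vee=\operatorname{Diag}(\operatorname{lcm}(d_i^\Lambda,\ell))\Lambda_W^\vee$. You instead work directly from the defining pairing condition in coordinates and observe that both centralizers are cut out by the single congruence $A_\Lambda c\equiv 0\pmod{\ell}$, which you then solve once by the same Smith-normal-form diagonalization. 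The two arguments run on the same computational engine and both invoke the same WLOG symmetric choice $A_\Lambda=P_\Lambda S_\Lambda P_\Lambda^T$ (which is exactly where $Q_\Lambda=P_\Lambda^T$ enters for you, in collapsing the second centralizer condition to the same congruence). What your version buys is that the appearance of the one matrix $A_{\rm Cent}$ in both formulas is explained a priori rather than observed a posteriori --- the paper only records the relation $A_{\hat\Lambda}\operatorname{Cent}_{\Lambda}(\Lambda_R)=\operatorname{Cent}_{\Lambda_R}(\Lambda)$ at the very end of its computation. What the paper's route buys is that it never needs the explicit Gram matrix $A_R^{-1}$ of the form on $\Lambda_W^\vee$, since the duality is packaged into the lattice $\hat\Lambda$ introduced beforehand.
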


\begin{proof}
We compute explicitly,
\begin{gather*}
\operatorname{Cent}_{\Lambda_R}(\Lambda) = \Lambda_R \cap \ell \cdot \hat{\Lambda}=A_R \Lambda_W^\vee \cap \big(A_{\Lambda}^{-1}A_R\big)^T \ell \Lambda_W^\vee\\
\hphantom{\operatorname{Cent}_{\Lambda_R}(\Lambda)}{} =\big(A_{\Lambda}^{-1}A_R\big)^T \big(\big(\big(A_{\Lambda}^{-1}A_R\big)^T\big)^{-1}A_R \Lambda_W^\vee \cap \ell \Lambda_W^\vee\big)=A_R A_{\Lambda}^{-1} \big(A_\Lambda \cap \ell \Lambda_W^\vee\big)\\
\hphantom{\operatorname{Cent}_{\Lambda_R}(\Lambda)}{}
	=A_R \big(P_\Lambda S_\Lambda P_\Lambda^T\big)^{-1} \big(P_\Lambda S_\Lambda P_\Lambda^T \Lambda_W^\vee\cap \ell \Lambda_W^\vee\big)
	=A_R \big(P_\Lambda^T\big)^{-1} S_\Lambda^{-1} \big(S_\Lambda \Lambda_W^\vee\cap \ell \Lambda_W^\vee\big) \\
\hphantom{\operatorname{Cent}_{\Lambda_R}(\Lambda)}{}
	=A_R \big(P_\Lambda^T\big)^{-1} S_\Lambda^{-1} \operatorname{Diag}(\operatorname{lcm}(S_{\Lambda_{ii}},\ell))\Lambda_W^\vee
	=A_R \big(P_\Lambda^T\big)^{-1} D_\ell \Lambda_W^\vee=A_RA_{\rm Cent}\Lambda_W^\vee.
	\end{gather*}
	On the other hand,
\begin{gather*}
\operatorname{Cent}_{\Lambda}(\Lambda_R) = \Lambda \cup \ell \hat{\Lambda}_R = \Lambda \cup \ell \Lambda_W^\vee = A_\Lambda \Lambda_W^\vee \cup \ell \Lambda_W^\vee 	= P_\Lambda S_\Lambda P_\Lambda^T \Lambda_W^\vee \cup \ell \Lambda_W^\vee \\
\hphantom{\operatorname{Cent}_{\Lambda}(\Lambda_R)}{}
	= P_\Lambda \big(S_\Lambda \Lambda_W^\vee \cup \ell \Lambda_W^\vee\big) = P_\Lambda S_\Lambda D_\ell \Lambda_W^\vee
	= A_\Lambda \big(P_\Lambda^T\big)^{-1} D_\ell \Lambda_W^\vee=A_\Lambda A_{\rm Cent}\Lambda_W^\vee.			
	\end{gather*}
	In particular, this means $A_{\hat{\Lambda}}\operatorname{Cent}_{\Lambda}(\Lambda_R)=\operatorname{Cent}_{\Lambda_R}(\Lambda)$.
\end{proof}
\subsection[Case $\Lambda=\Lambda_W$]{Case $\boldsymbol{\Lambda=\Lambda_W}$}\label{section4.2}

In order to exhaust all cases that appear in our setting, we continue with $\Lambda=\Lambda_W$:

\begin{Lemma}\label{lm:AellForLambda=LambdaW}
In the case $\Lambda=\Lambda_W$, the centralizer transfer matrix $A_\ell$ is of the following form:
\begin{gather*}
A_\ell =	\begin{cases}
A_{\Lambda_W}A_{\rm Cent} Q_C^TP_C^{-1}A_{\Lambda_W}^{-1}, &\operatorname{gcd}(\ell,|\pi_1|) \neq 1,\\
\ell \cdot \operatorname{id}, & \text{else}.
\end{cases}
\end{gather*}
Here, $C=P_CS_CQ_C$ denotes the Smith decomposition of the Cartan matrix of $\g$.
\end{Lemma}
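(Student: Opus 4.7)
The strategy is to verify, for each of the two cases, that the proposed endomorphism satisfies the two defining properties of a centralizer transfer map from Definition~\ref{def:matrix_A_l}. In the coprime case $\gcd(\ell,|\pi_1|)=1$, this essentially follows from (and generalizes) Lemma~\ref{lm:AellForGCD=1}: when $\ell$ is invertible on the relevant torsion groups, both $\ell\Lambda_W = \Lambda_W \cap \ell\hat{\Lambda}_R$ and $\ell\Lambda_R = \Lambda_R \cap \ell\hat{\Lambda}_W$ hold, and therefore $A_\ell = \ell \cdot \operatorname{id}$ is a valid choice.

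For the main case, I would first pass to the $\lambda_i$-basis of $\Lambda_W$. Conjugation by $A_{\Lambda_W}$ turns the proposed formula into the matrix $A_{\rm Cent} Q_C^T P_C^{-1}$, which is integer-valued as a product of the diagonal integer matrix $A_{\rm Cent} = \operatorname{Diag}(\ell/\gcd(\ell,d_i))$ and the unimodular matrix $Q_C^T P_C^{-1}$. Hence it gives a well-defined endomorphism of $\Lambda_W$. Condition~(1) of Definition~\ref{def:matrix_A_l} is then immediate: since $Q_C^T P_C^{-1}$ preserves $\mathbb{Z}^n$, one has $A_\ell(\Lambda_W) = A_{\rm Cent} \mathbb{Z}^n$, which is precisely $\operatorname{Cent}_{\Lambda_W}^\ell(\Lambda_R)$ in the chosen basis by Lemma~\ref{lm:ExplicitFormOfCentralizers}.

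Condition~(2) is the main step. Using $\Lambda_R = C \mathbb{Z}^n$ in the $\lambda_i$-basis and substituting the Smith decomposition $C = P_C S_C Q_C$, it reduces, after absorbing the unimodular $P_C^{-1} P_C$ on one side and $Q_C$ on the other, to the lattice identity
\begin{gather*}
A_{\rm Cent}\, Q_C^T S_C\, \mathbb{Z}^n \;=\; P_C S_C Q_C\, A_{\rm Cent}\, \mathbb{Z}^n.
\end{gather*}
Establishing this identity is what I expect to be the main obstacle: it requires balancing the failure of $A_{\rm Cent}$ to commute with the unimodular Smith factors $P_C, Q_C$, and so encodes the interaction between the common divisors of $\ell$ with the root lengths (captured by $A_{\rm Cent}$) and with $|\pi_1|$ (captured by the elementary divisors in $S_C$). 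My plan is to exploit that $A_{\rm Cent}$ and $S_C$ are simultaneously diagonal, and hence commute, combined with a careful Hermite-normal-form comparison of both sides; where a uniform argument does not go through, the remaining verification can be completed case by case along the explicit list of simple~$\g$ developed in Section~\ref{section4}.
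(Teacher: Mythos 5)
Your reduction is sound and runs parallel to the paper's own proof: one checks the two conditions of Definition~\ref{def:matrix_A_l}, condition~(1) is immediate from Lemma~\ref{lm:ExplicitFormOfCentralizers} because $Q_C^TP_C^{-1}$ is unimodular, and condition~(2) is correctly converted, in the $\lambda_i$-basis, into the lattice identity $A_{\rm Cent}\, Q_C^T S_C\, \Z^n = P_C S_C Q_C\, A_{\rm Cent}\, \Z^n$. But that identity \emph{is} the content of the lemma, and you leave it unproven: a ``careful Hermite-normal-form comparison'' is not an argument, and falling back on a case-by-case check over all simple $\g$ would be a different (and much longer) proof than intended. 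The commutativity of the two diagonal matrices $A_{\rm Cent}$ and $S_C$ cannot be the engine, since the obstruction sits entirely in the unimodular factors $P_C$, $Q_C$. More seriously, nothing in your plan uses the hypothesis $\operatorname{gcd}(\ell,|\pi_1|)\neq 1$, without which the claimed formula is not the right one (for $B_n$ with $\ell$ odd one has $A_{\rm Cent}=\ell\cdot\operatorname{id}$ but $C\neq C^T$, and the two branches of the lemma genuinely differ); any correct proof must locate where this hypothesis enters.

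The missing idea is a single intertwining relation: $A_{\rm Cent}C^T = CA_{\rm Cent}$. Writing $e_i=\ell/\operatorname{gcd}(\ell,d_i)$ and using $C_{ij}=(\alpha_i,\alpha_j)/d_i$, the $(i,j)$ entries of the two sides are $e_i(\alpha_j,\alpha_i)/d_j$ and $e_j(\alpha_i,\alpha_j)/d_i$, so the relation holds iff $\operatorname{lcm}(\ell,d_i)=\operatorname{lcm}(\ell,d_j)$ whenever $\alpha_i$ and $\alpha_j$ are not orthogonal --- and this is exactly where the hypothesis is used, since for the doubly-laced types with $|\pi_1|=2$ it forces $\ell$ even while $d_i\in\{1,2\}$. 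Granting it, your identity follows in one line:
\begin{gather*}
A_{\rm Cent}\,Q_C^TS_C\,\Z^n=A_{\rm Cent}\,Q_C^TS_CP_C^T\,\Z^n=A_{\rm Cent}C^T\Z^n=CA_{\rm Cent}\Z^n=P_CS_CQ_C\,A_{\rm Cent}\,\Z^n;
\end{gather*}
equivalently, in the paper's formulation, $(C^T)^{-1}Q_C^TP_C^{-1}C=(P_C^T)^{-1}Q_C$ is unimodular, so $A_\ell A_R\Lambda_W^\vee=A_RA_{\rm Cent}\Lambda_W^\vee$. Your treatment of condition~(1) and of the coprime branch otherwise agrees with the paper.
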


\begin{proof}As we noted in Example \ref{ex:ExplicitFormOfSNF}, we have $A_{\Lambda_W}=\operatorname{Diag}(d_i)$, for $d_i$ being the $i$th root length. Since $d_i \in \{1,p\}$ for some prime number $p$, up to a permutation $A_{\Lambda_W}$ is already in Smith normal form: this means that $P_{\Lambda_W}$ is a permutation matrix of the form $(P_{\Lambda_W})_{ij}=\delta_{j,\sigma(i)}$ for some $\sigma \in S_n$, s.t.\ $d_{\sigma(1)}\leq \cdots \leq d_{\sigma(n)}$. It follows that $A_{\rm Cent}=\operatorname{Diag}\big( \frac{\ell}{\gcd(\ell,d_i)}\big)$.

Using the def\/inition $C_{ij}=\frac{(\alpha_i,\alpha_j)}{d_i}$, in the case $\operatorname{gcd}(\ell,|\pi_1|) \neq 1$ we obtain
\begin{gather*}
A_{\rm Cent}C^T=CA_{\rm Cent}.
\end{gather*}
	Thus,
\begin{gather*}
A_\ell A_R= A_{\Lambda_W}A_{\rm Cent} Q_C^TP_C^{-1}A_{\Lambda_W}^{-1}A_R =A_RC^{-1}A_{\rm Cent}Q_C^TP_C^{-1}C \\
\hphantom{A_\ell A_R}{} =A_R A_{\rm Cent} \big(C^T\big)^{-1}Q_C^TP_C^{-1}C=A_R A_{\rm Cent}.
\end{gather*}
By the previous lemma, this proves the f\/irst condition for~$A_\ell$. The second condition follows immediately from the previous lemma.

The case $\operatorname{gcd}(\ell,|\pi_1|) = 1$ follows from Lemma~\ref{lm:AellForGCD=1} and the fact that $|\pi_1|=|\Lambda_W^\vee/\Lambda_R^\vee|$.
\end{proof}

\subsection[Case $A_n$]{Case $\boldsymbol{A_n}$}\label{section4.3}
In the following example, we treat the case $\g=A_n$ with fundamental group $\Lambda_W/\Lambda_R=\Z_{n+1}$ for general intermediate lattices $\Lambda_R \subseteq \Lambda \subseteq \Lambda_W$.
\begin{Example}\label{ex:AellForAn}In order to compute the centralizer transfer map $A_\ell$, we f\/irst compute the Smith decomposition of $A_R$:
	\begin{gather*}
	A_R=
	\begin{pmatrix}
	2	& -1& 0	& \dots	& 	& 	 0 \\
	-1	& 2 &-1	& 0 	& 	& 	 0 \\
	0	& -1& 2 & \ddots & \ddots & \vdots \\
	0	&0 & \ddots & \ddots & -1 & 0 \\
	\vdots	& & \ddots & -1 & 2 & -1 \\
	0	& \dots & & 0 & -1 & 2
	\end{pmatrix}\\
\hphantom{A_R}{}	=
	\arraycolsep=4pt\def\arraystretch{0.8}
	\begin{pmatrix}
	-1	& 0& 0	& \dots	& 	& 	 0 \\
	2	& -1 &0	& 	& 	& 	 0 \\
	0	& 2& -1 & \ddots & & \vdots \\
	0	&0 & \ddots & \ddots & & 0 \\
	\vdots	& & \ddots & 2 & -1 & 0 \\
	0	& \dots & & 0 & 2 & 1
	\end{pmatrix}\!
	\begin{pmatrix}
	1	& 0& 0	& \dots	& 	& 	 0 \\
	0	& 1 &0	& 	& 	& 	 0 \\
	0	& 0& 1 & \ddots & & \vdots \\
	\vdots& & \ddots & \ddots & & \\
	& & \ddots & & 1 & 0 \\
	0	& \dots & & & 0 & n+1
	\end{pmatrix}\!
	\begin{pmatrix}
	-2	& 1& 0	& \dots	& 	& 	 0 \\
	-3	& 0 &1	& \ddots 	& 	& 	 0 \\
	-4	& 0& 0 & \ddots & & \vdots \\
	\vdots& \vdots & & \ddots & 1 & 0 \\
	-n & & & & 0 & 1 \\
	1	& 0 & \dots & & 0 & 0
	\end{pmatrix}.
	\end{gather*}
A sublattice $\Lambda_R \subsetneq \Lambda \subsetneq \Lambda_W$ is uniquely determined by a divisor $d \,|\, n+1$, so that $\Lambda/\Lambda_R \cong \Z_d$ and is generated by the multiple $\hat{d}\lambda_n$, where $\hat{d}:=\frac{n+1}{d}$. Then
	\begin{gather*} d_i^\Lambda =
	\begin{cases}
	1,& i<n,\\
	d, & i=n.
	\end{cases}
	\end{gather*}
Since $A_n$ is simply laced with cyclic fundamental group, the formula $A_\Lambda=P_R S_\Lambda P_R^T$ gives us symmetric basis matrices of sublattices $\Lambda_R \subseteq \Lambda \subseteq \Lambda_W$. We also substitute the above basis matrix of the root lattice~$A_R$ by $A_R(Q_R)^{-1}P_R^T$. It is then easy to see that the def\/inition $A_\ell:=P_R D_\ell P_R^T$ gives a centralizer transfer matrix. We calculate it explicitly
	\begin{gather*}
	(A_\ell)_{ij}=\big(P_R D_\ell P_R^{-1}\big)_{ij}=
	\begin{cases}
	\delta_{ij}, &i<n, \\
	\displaystyle (n+1-j)\left( \frac{\ell}{\operatorname{gcd}(\ell,d)} -1 \right) , &i=n \text{ and } j<n, \\
	\displaystyle\frac{\ell}{\operatorname{gcd}(\ell,d)} , & i=j=n. 		
	\end{cases}
	\end{gather*}
Now a form $g$ is uniquely determined by a $d$th root of unity $g(\chi,\chi)=\exp \big(\frac{2 \pi i\cdot k}{d}\big)=\zeta_{d}^k$ with some~$k$. Then we calculate the form $a_g^\ell$ on the generator
	\begin{gather*}
	a_g^\ell(\chi,\chi)=q^{-(\chi,A_\ell(\chi))} g(\chi,A_\ell(\chi))
	= q^{-\frac{(n+1)^2 \cdot \ell}{d^2 \operatorname{gcd}(\ell,\hat{d})}(\lambda_n^\vee,\lambda_n^\vee)} \cdot g(\chi,\chi)^{\frac{ \ell}{\operatorname{gcd}(\ell,\hat{d})}}\\
\hphantom{a_g^\ell(\chi,\chi)}{}
	= \exp \left(\frac{2 \pi i \cdot(k \ell - \hat{d}n)}{d \cdot \operatorname{gcd}(\ell,\hat{d})} \right).
	\end{gather*}	
For example the trivial $g$ (i.e., $k=0$) gives an $R$-matrix for all lattices $\Lambda$ (def\/ined by $\hat{d}d=n+1$) if\/f $\frac{\hat{d}}{\operatorname{gcd}(\ell,\hat{d})}$ is coprime to~$d$. For $\ell$ coprime to the divisor $n+1$ this amounts to all lattices associated to decompositions of $n+1$ into two coprime factors.
\end{Example}

\subsection[Case $D_n$]{Case $\boldsymbol{D_n}$}\label{section4.4}

Finally, we consider the root lattice $D_n$. Since we have $\pi_1(D_{2n\geq 4}) \cong \Z_2 \times \Z_2$ and $\pi_1(D_{2n+1\geq 5}) \cong \Z_4$, it is appropriate to split this investigation in two steps. We start with~$D_{2n\geq 4}$. In order to compute the respective Smith decompositions, we used the software \textit{Wolfram Mathematica}.

\begin{Example} In the case $D_{2n\geq 4}$, we have three dif\/ferent possibilities for the lattices $\Lambda_R \subseteq \Lambda_1,\Lambda_2 \subseteq \Lambda_W$:

1.~$\Lambda_1 \neq \Lambda_2$, $H_1 \cong H_2 \cong \Z_2$: In this case, the subgroups $\Lambda_i/\Lambda_R \subseteq \Lambda_R$ are spanned by the fundamental weights $\lambda_{2(n-1)+i}$. As in the case $A_n$, we def\/ine the centralizer transfer map $A_\ell:=P_R D_\ell P_R^{-1}$ on $H_2$. This is possible since the symmetric basis matrix $A_{\Lambda_2}=P_R S_{\Lambda_2} P_R^T$ of $\Lambda_2$ is already in Smith normal form. Using the software \textit{Wolfram Mathematica} in order to compute $P_R$, we obtain $A_\ell(\lambda_{2n})=\frac{\ell}{\operatorname{gcd}(2,\ell)}$. Combining this with $(\lambda_{2n-1},\lambda_{2n})=\frac{n-1}{2}$, we get
	\begin{gather*}
		a_g^\ell(\lambda_{2n-1},\lambda_{2n})=\exp \left(\frac{2 \pi i\cdot (kl-2(n-1))}{2\cdot \operatorname{gcd}(2,\ell)} \right)
	\end{gather*}
	for $g(\lambda_{2n-1},\lambda_{2n})=\exp \big( \frac{2 \pi i k}{2} \big) $.

 2.~$\Lambda_1 = \Lambda_2$, $H_i \cong \Z_2$: Without restrictions and in order to use the same def\/inition for $A_\ell$ as above, we choose $\Lambda_i$, s.t.\ the group $\Lambda_i/\Lambda_R$ is spanned by $\lambda_{2n}$. Combining the above result $A_\ell(\lambda_{2n})=\frac{\ell}{\operatorname{gcd}(2,\ell)}$ with $(\lambda_{2n},\lambda_{2n})=\frac{n}{2}$, we obtain{\samepage
\begin{gather*}
a_g^\ell(\lambda_{2n},\lambda_{2n})=\exp \left(\frac{2 \pi i\cdot (kl-2n)}{2\cdot \operatorname{gcd}(2,\ell)} \right)
\end{gather*}
for $g(\lambda_{2n},\lambda_{2n})=\exp \big( \frac{2 \pi i k}{2} \big) $.}

3.~$\Lambda_1=\Lambda_2=\Lambda_W$, $H \cong \Z_2 \times \Z_2$: A group pairing $g\colon (\mathbb{Z}_2 \times \mathbb{Z}_2) \times (\mathbb{Z}_2 \times \mathbb{Z}_2) \to \mathbb{C}^\times$ is uniquely def\/ined by a matrix $K \in \mathfrak{gl}(2,\mathbb{F}_2)$, so that
	\begin{gather*}
		g(\lambda_{2(n-1)+i},\lambda_{2(n-1)+j})=\exp \left( \frac{2 \pi i K_{ij}}{2} \right).
	\end{gather*}
	Since $D_n$ is simply-laced, we have $A_\ell=\ell \cdot \operatorname{id}$. Using $(\lambda_{2(n-1)+i},\lambda_{2(n-1)+j}) \text{ mod } 2=\delta_{i+j \text{odd}}$, we obtain
	\begin{gather*}
		a_\ell^g(\lambda_{2(n-1)+i},\lambda_{2(n-1)+j}) =\exp \left( \frac{2 \pi i \cdot K_{ij}\ell}{2}\right)(-1)^{i+j}.
	\end{gather*}
\end{Example}

The last step is the case $D_{2n+1\geq 5}$:
\begin{Example}Since it it is simply-laced and its fundamental group is cyclic, the case $D_{2n+1\geq 5}$ can be treated very similar to $A_n$. We distinguish two cases:

 1.~$\Lambda_1=\Lambda_2$, $H_i =\car{2\lambda_{2n+1}} \cong \Z_2$. As in the case $A_n$, we def\/ine the centralizer transfer map $A_\ell:=P_R D_\ell P_R^{-1}$ on $H_2$. Using $(\lambda_{2n+1},\lambda_{2n+1})=\frac{2n+1}{4}$, we obtain
		\begin{gather*}
			a_g^\ell(2\lambda_{2n+1},2\lambda_{2n+1})=\exp \left(\frac{2 \pi i\cdot (k\ell-2(2n+1))}{2\cdot \operatorname{gcd}(2,\ell)} \right).
		\end{gather*}
		for $g(2\lambda_{2n+1},2\lambda_{2n+1})=\exp \big( \frac{2 \pi i k}{2} \big) $.

2.~$\Lambda_1=\Lambda_2=\Lambda_W$, $H =\car{\lambda_{2n+1}} \cong \Z_4$. By an analogous argument as above, we obtain
		\begin{gather*}
			a_g^\ell(\lambda_{2n+1},\lambda_{2n+1})=\exp \left(\frac{2 \pi i\cdot (k\ell-(2n+1))}{4} \right)
		\end{gather*}
		for $g(\lambda_{2n+1},\lambda_{2n+1})=\exp \big( \frac{2 \pi i k}{4} \big) $.
\end{Example}

\subsection{Table of all quasitriangular quantum groups}\label{section4.5}
In the following table, we list all simple Lie algebras and check for which non-trivial choices of $\Lambda$, $\Lambda_i$, $\ell$ and $g$ the element $R_0\bar{\Theta}$ is an $R$-matrix. As before, we def\/ine $H_i:=\Lambda_i/\Lambda_R$ and $H:=\Lambda/\Lambda_R$. In the cyclic case, if $x_i$ are generators of the~$H_i$, then the pairing is uniquely def\/ined by an element $1 \leq k \leq |H_i|$, s.t.\ $g(x_1,x_2)=\exp \big( \frac{2 \pi i k}{|H_i|} \big)$. In the case~$D_{2n}$, $\Lambda=\Lambda_W$, $g$ is uniquely def\/ined by a $2\times 2$-matrix $K \in \mathfrak{gl}(2,\F_2)$, s.t.\ $g(\lambda_{2(n-1)+i},\lambda_{2(n-1)+j})=\exp \big( \frac{2 \pi i K^g_{ij}}{2} \big)$ for $i,j\in \{1,2\}$.

 The columns of the following table are labeled by
 \begin{enumerate}\itemsep=0pt
 	\item[1)] the f\/inite-dimensional simple complex Lie algebra~$\g$,
 	\item[2)] the natural number~$\ell$, determining the root of unity $q=\exp\big( {\frac{2 \pi i}{\ell}}\big) $,
 	\item[3)] the number of possible $R$-matrices for the Lusztig ansatz,
 	\item[4)] the subgroups $H_i \subseteq H=\Lambda/\Lambda_R$ introduced in Theorem \ref{thm:solutionsgrpeq},
 	\item[5)] the subgroups $H_i$ in terms of generators given by multiples of fundamental dominant weights $\lambda_i \in \Lambda_W$,
 	\item[6)] the group pairing $g\colon H_1 \times H_2 \to \C^\times$ determined by its values on generators,
 	\item[7)] the group pairing $a_g^\ell \subseteq \Lambda/\Lambda'$ introduced in Theorem~\ref{thm:radical} determined by its values on generators.
 \end{enumerate}

{\footnotesize 
\centering
\begin{longtable}{@{}c@{\,}||@{\,}c@{\,}|@{\,}c@{\,}||@{\,}c@{\,}|@{\,}c@{\,}|@{\,}c@{\,}|@{\,}c@{}}
	$\g$
	&$\ell$
	&\#
	&$H_i\cong$
	&$H_i\,{\scriptstyle (i=1,2)}$
	&$g$
	&$a_g^\ell$
	\\ \hline \hline
	\multirow{2}{*}{\text{all}} 	& 	& \multirow{2}{*}{1}	& \multirow{2}{*}{$\Z_1$} &\multirow{2}{*}{$\car{0}$} &\multirow{2}{*}{$g=1 $} &\multirow{2}{*}{$1$} 	\\
	&&&&&&\\	
	\hline
	\multirow{4}{*}{} 	&	& \multirow{4}{*}{$\infty$}	& \multirow{2}{*}{$\Z_d$} &\multirow{2}{*}{$\car{\hat{d}\lambda_n}$} &\multirow{2}{*}{$g(\hat{d}\lambda_n,\hat{d}\lambda_n)=\exp \big( \frac{2\pi i k}{d}\big) $} &\multirow{4}{*}{$\exp \big(\frac{2 \pi i \cdot(k \ell - \hat{d}n)}{d \cdot \operatorname{gcd}(\ell,\hat{d})} \big)$} 	\\
	$A_{n\geq 1}$		&	& 	&&&&\\
	$\pi_1=\Z_{n+1}$	& 	&	& \multirow{2}{*}{$d\,|\, n+1$}&\multirow{2}{*}{$\hat{d}=\frac{n+1}{d}$}&\multirow{2}{*}{$\operatorname{gcd}\big( d,\frac{k \ell - \hat{d}n}{\operatorname{gcd}(\ell,\hat{d})}\big) =1 $}&\\
	&&&&&&\\	
	\hline
	\multirow{4}{*}{} 	& \multirow{2}{*}{$\ell$ even}	& \multirow{2}{*}{2}	& \multirow{4}{*}{$\Z_2$} &\multirow{4}{*}{$\car{\lambda_n}$} &\multirow{2}{*}{$g(\lambda_n,\lambda_n)=\pm 1 $} &\multirow{2}{*}{$-1$} 	\\
	$B_{n\geq 2}$		&	& 	&&&&\\ \cline{2-3} \cline{6-7}
	$\pi_1=\Z_{2}$	& \multirow{2}{*}{$\ell$ odd} 	& \multirow{2}{*}{1}	&&&\multirow{2}{*}{$g(\lambda_n,\lambda_n)=(-1)^{n+1}$}& \multirow{2}{*}{$\exp \big(\frac{2 \pi i \cdot(k \ell - n)}{2} \big)$}\\
	&&&&&&\\	
	\hline
	\multirow{6}{*}{} 	& \multirow{2}{*}{$\ell\equiv2\text{ mod }4$}	& \multirow{2}{*}{1}	& \multirow{6}{*}{$\Z_2$} &\multirow{6}{*}{$\car{\lambda_n}$} &\multirow{2}{*}{$g(\lambda_n,\lambda_n)=1 $} &\multirow{2}{*}{$\exp \big(\frac{2 \pi i \cdot(k \frac{\ell}{2} + 1)}{2} \big)$} 	\\
	& 	& 	&&&&\\	\cline{2-3} \cline{6-7}
	$C_{n\geq 3}$		& \multirow{2}{*}{$\ell\equiv0\text{ mod }4$}	& \multirow{2}{*}{2}	&&&\multirow{2}{*}{$g(\lambda_n,\lambda_n)=\pm 1 $} &\multirow{2}{*}{$-1$} \\
	$\pi_1=\Z_{2}$		& 	& 	&&&&	\\ \cline{2-3} \cline{6-7}
	& \multirow{2}{*}{$\ell$ odd}	& \multirow{2}{*}{1}	&&&\multirow{2}{*}{$g(\lambda_n,\lambda_n)=- 1 $}&\multirow{2}{*}{$\exp \big(\frac{2 \pi i \cdot(k \ell - 2n)}{2} \big)$}\\
	&&&&&&\\	
	\hline
	\multirow{18}{*}{} 	& \multirow{2}{*}{$\ell \equiv 2 \text{ mod }4$}	&\multirow{2}{*}{1} 	& \multirow{6}{*}{$\Z_2$} &\multirow{3}{*}{$H_1 \cong\car{\lambda_{2n-1}}$} & \multirow{2}{*}{$g(\lambda_{2n-1},\lambda_{2n})=(-1)^n$} &\multirow{4}{*}{$\exp \big(\frac{2 \pi i\cdot (k\frac{\ell}{2}-n+1))}{2} \big)$}	\\
	&&&&&&\\	\cline{2-3} \cline{6-6}
	&\multirow{2}{*}{$\ell \equiv 0 \text{ mod }4$}	& 	\multirow{2}{*}{$2\delta_{2 \,|\, n}$}& &&\multirow{2}{*}{$g(\lambda_{2n-1},\lambda_{2n})=\pm 1$, $n$ even} & \\
	& 	&	& &\multirow{3}{*}{$H_2 \cong\car{\lambda_{2n}}$}&&	\\ \cline{2-3} \cline{6-7}
	&\multirow{2}{*}{$\ell$ odd} &\multirow{2}{*}{1}&&&\multirow{2}{*}{$g(\lambda_{2n-1},\lambda_{2n})=-1$}&\multirow{2}{*}{$\exp \big(\frac{2 \pi i\cdot (k\ell-2(n-1))}{2} \big)$}\\
	$D_{2n\geq 4}$&&&&&&\\\cline{2-7}
	$\pi_1=\Z_{2} \times \Z_2$&\multirow{2}{*}{$\ell \equiv 2 \text{ mod }4$}&\multirow{2}{*}{$1$}&\multirow{6}{*}{$\Z_2$}&\multirow{6}{*}{$\car{\lambda_{2n}}$}&\multirow{2}{*}{$g(\lambda_{2n},\lambda_{2n})=(-1)^{n+1}$}&\multirow{4}{*}{$\exp \big( \frac{2 \pi i (k \frac{\ell}{2}-n)}{2} \big) $}\\	
	&&&&&&\\\cline{2-3} \cline{6-6}
	&\multirow{2}{*}{$\ell \equiv 0 \text{ mod }4$}&\multirow{2}{*}{$2\delta_{2 \nmid n}$}&&&\multirow{2}{*}{$g(\lambda_{2n},\lambda_{2n})=\pm 1$, $n$ odd}&\\
	&&&&&&\\\cline{2-3} \cline{6-7}
	&\multirow{2}{*}{$\ell$ odd}&\multirow{2}{*}{1}&&&\multirow{2}{*}{$g(\lambda_{2n},\lambda_{2n})=-1$}&\multirow{2}{*}{$\exp \big( \frac{2 \pi i (k \ell-2n)}{2} \big) $}\\
	&&&&&&\\\cline{2-7}
	&\multirow{2}{*}{$\ell$ even}	&\multirow{2}{*}{$16$}&\multirow{4}{*}{$\Z_2 \times \Z_2$}&\multirow{4}{*}{$\car{\lambda_{2n},\lambda_{2n+1}}$}&\multirow{2}{*}{$g(\lambda_{2(n-1)+i},\lambda_{2(n-1)+j})=\pm 1$}&\multirow{4}{*}{$\exp \big( \frac{2 \pi i \cdot K_{ij}\ell}{2}\big)(-1)^{i+j}$}\\
	&&&&&&\\\cline{2-3}\cline{6-6}
	&\multirow{2}{*}{$\ell$ odd}&\multirow{2}{*}{$6$}&&&\multirow{2}{*}{$\det(K)=K_{12}+K_{12}$ mod $2$}&\\
	&&&&&&\\
	\hline
	\multirow{10}{*}{} 	& \multirow{2}{*}{$\ell\equiv2\text{ mod }4$}	& \multirow{2}{*}{ 1}	& \multirow{6}{*}{$\Z_2$} &\multirow{6}{*}{$\car{2\lambda_{2n+1}}$} &\multirow{2}{*}{$g(2\lambda_{2n+1},2\lambda_{2n+1})=1 $} &\multirow{4}{*}{$\exp \big(\frac{2 \pi i \cdot(k \frac{\ell}{2} -2n-1)}{2} \big)$} 	\\
	& 	& 	&&&&\\	\cline{2-3} \cline{6-6}
	& \multirow{2}{*}{$\ell\equiv0\text{ mod }4$}	& \multirow{2}{*}{2}	&&&\multirow{2}{*}{$g(2\lambda_{2n+1},2\lambda_{2n+1})=\pm 1 $} & \\
	& 	& 	&&&&	\\ \cline{2-3} \cline{6-7}
	$D_{2n+1\geq 5}$	& \multirow{2}{*}{$\ell$ odd}	& \multirow{2}{*}{1}	&&&\multirow{2}{*}{$g(2\lambda_{2n+1},2\lambda_{2n+1})=- 1 $}&\multirow{2}{*}{$\exp \big(\frac{2 \pi i \cdot(k \ell - 2(2n+1))}{2} \big)$}\\
	$\pi_1=\Z_{4}$	&&&&&&\\ \cline{2-7}
	&\multirow{2}{*}{$\ell$ even}&\multirow{2}{*}{4}&\multirow{4}{*}{$\Z_4$}&\multirow{4}{*}{$\car{\lambda_{2n+1}}$}&\multirow{2}{*}{$g(\lambda_{2n+1},\lambda_{2n+1})=c$, $c^4=1$}&\multirow{4}{*}{$\exp \big(\frac{2 \pi i \cdot(k \ell - (2n+1))}{4} \big)$}\\
	&&&&&&\\ \cline{2-3}\cline{6-6}
	&\multirow{2}{*}{$\ell$ odd}&\multirow{2}{*}{2}&&&\multirow{2}{*}{$g(\lambda_{2n+1},\lambda_{2n+1})=\pm 1$}&\\
	&&&&&&\\
	\hline
	\multirow{6}{*}{} 	& \multirow{2}{*}{$\ell \equiv 0$ mod $3$}	& \multirow{2}{*}{3 }	& \multirow{6}{*}{$\Z_3$} &\multirow{6}{*}{$\car{\lambda_n}$} &\multirow{2}{*}{$g(\lambda_n,\lambda_n)=c$, $c^3=1$} &\multirow{6}{*}{$\exp \big(\frac{2 \pi i \cdot(k \ell -1)}{3} \big)$} 	\\
	&	& 	&&&&\\ \cline{2-3} \cline{6-6}
	$E_{6}$	& \multirow{2}{*}{$\ell \equiv 1$ mod $3$} 	& \multirow{2}{*}{2 }	&&&\multirow{2}{*}{$g(\lambda_n,\lambda_n)=1,\exp \big(\frac{2 \pi i 2}{3} \big) $}&\\
	$\pi_1=\Z_{3}$&&&&&&\\ \cline{2-3} \cline{6-6}
	&\multirow{2}{*}{$\ell \equiv 2$ mod $3$}&\multirow{2}{*}{2 }&&&\multirow{2}{*}{$g(\lambda_n,\lambda_n)=1,\exp \big(\frac{2 \pi i }{3} \big) $}&\\	
	&&&&&&\\	
	\hline
	\multirow{4}{*}{} 	& \multirow{2}{*}{$\ell$ even}	& \multirow{2}{*}{2 }	& \multirow{4}{*}{$\Z_2$} &\multirow{4}{*}{$\car{\lambda_n}$} &\multirow{2}{*}{$g(\lambda_n,\lambda_n)=\pm 1 $} &\multirow{4}{*}{$\exp \big(\frac{2 \pi i \cdot(k \ell -1)}{2} \big)$}\\
	$E_{7}$		&	& 	&&&&\\ \cline{2-3} \cline{6-6}
	$\pi_1=\Z_{2}$	& \multirow{2}{*}{$\ell$ odd} 	& \multirow{2}{*}{1 }	&&&\multirow{2}{*}{$g(\lambda_n,\lambda_n)=1$}& \\
	&&&&&&\\	
	\hline		
	\caption{Solutions for $R_0$-matrices.}	\label{tbl:Solutions}
\end{longtable}

}

The Lie algebras $E_8$, $F_4$ and $G_2$ have trivial fundamental groups and thus have no non-trivial solution. We want to emphasize once more that the choice $\Lambda_i=\Lambda_R$ always leads to a~quasitriangular quantum group.

The following lemma connects our results with Lusztig's original result:
\begin{Lemma}\label{lm:Lusztigkernel}
In Lusztig's definition of a quantum group he uses the quotient
\begin{gather*}
\Lambda'_{\rm Lusz}=2\operatorname{Cent}_{\Lambda_R}(2\Lambda_W).
 \end{gather*} This coincide with our choice $\Lambda'=\operatorname{Cent}_{\Lambda_R}(\Lambda_1 + \Lambda_2)$, if and only if
	\begin{gather}\label{LusztigsChiceLambda'equation}
	2\gcd\big(\ell,d_i^\Lambda\big)=\gcd\big(\ell,2d_i^W\big),
	\end{gather}
	where the $d_i^\Lambda$ denote the invariant factors of $\Lambda_W^\vee/\Lambda$ and the $d_i^W$ denote the invariant factors of $\Lambda_W^\vee/\Lambda_W$ $($i.e., ordered root lengths$)$.
	
In particular, for $\ell$ odd these choices never coincide. For $\Lambda=\Lambda_W$, $\Lambda'=\Lambda'_{\rm Lusz}$ holds if and only if $2d_i \,|\, \ell$. This is the most extreme case of divisibility and it is precisely the case appearing in logarithmic conformal field theories.
\end{Lemma}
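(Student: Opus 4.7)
The plan is to compute both sides of the claimed equality $\Lambda'=\Lambda'_{\rm Lusz}$ using Lemma~\ref{lm:ExplicitFormOfCentralizers} and to reduce matters to a comparison of invariant factors. Writing $\Lambda:=\Lambda_1+\Lambda_2$ and taking the symmetric Smith decomposition $A_\Lambda=P_\Lambda S_\Lambda P_\Lambda^T$ with diagonal entries $d_i^\Lambda$, the lemma directly gives
\begin{gather*}
\operatorname{Cent}_{\Lambda_R}(\Lambda)=A_R\bigl(P_\Lambda^T\bigr)^{-1}\operatorname{Diag}\bigl(\ell/\gcd(\ell,d_i^\Lambda)\bigr)\Lambda_W^\vee.
\end{gather*}
Since $A_{\Lambda_W}=\operatorname{Diag}(d_i^W)$ is already diagonal (up to the permutation matrix $P_{\Lambda_W}$ that orders the root lengths), the Smith decomposition of $A_{2\Lambda_W}=2A_{\Lambda_W}$ has the same unimodular $P_{\Lambda_W}$ and invariant factors $2d_i^W$. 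Applying the lemma once more and scaling by $2$ yields
\begin{gather*}
2\operatorname{Cent}_{\Lambda_R}(2\Lambda_W)=A_R\bigl(P_{\Lambda_W}^T\bigr)^{-1}\operatorname{Diag}\bigl(2\ell/\gcd(\ell,2d_i^W)\bigr)\Lambda_W^\vee.
\end{gather*}

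Next I would observe that the diagonal multisets on the right-hand sides are, by construction, the invariant factors of the quotients $\Lambda_R/\Lambda'$ and $\Lambda_R/\Lambda'_{\rm Lusz}$ respectively, and these are intrinsic to the quotients. Hence $\Lambda'=\Lambda'_{\rm Lusz}$ forces the multisets to agree entry by entry (both Smith decompositions are ordered so that $d_i\mid d_{i+1}$), yielding the identity $2\gcd(\ell,d_i^\Lambda)=\gcd(\ell,2d_i^W)$. For the converse — that matching invariant factors actually implies equality of the two sublattices — I would exploit the chain $2\Lambda_W\subseteq 2\Lambda\subseteq \Lambda$ to compare the unimodular base changes $P_\Lambda$ and $P_{\Lambda_W}$ in a common frame. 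This compatibility step is the technical heart of the argument; it is trivial when $\Lambda=\Lambda_W$ (the case where the two $P$-matrices coincide outright) and reduces in general to tracking the induced maps on the Smith quotients.

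The two special consequences then follow by elementary arithmetic. For $\ell$ odd, $\gcd(\ell,2d_i^W)=\gcd(\ell,d_i^W)$ is odd while $2\gcd(\ell,d_i^\Lambda)$ is even, so the identity can never hold. For $\Lambda=\Lambda_W$ one has $d_i^\Lambda=d_i^W=:d_i$, and a short case-check on $d\in\{1,2\}$ (the only root lengths occurring for those simple algebras with nontrivial fundamental group) confirms that $2\gcd(\ell,d)=\gcd(\ell,2d)$ is equivalent to $2d\mid\ell$. The main obstacle, as indicated above, is the converse direction of the main equivalence, where care is needed to ensure that agreement of invariant factors implies genuine equality of the lattices despite the a priori different unimodular base changes involved.
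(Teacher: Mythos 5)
Your proposal follows essentially the same route as the paper: both lattices are written explicitly via Lemma~\ref{lm:ExplicitFormOfCentralizers} as $A_R$ times a diagonal matrix acting on $\Lambda_W^\vee$ (the paper computes $2\operatorname{Cent}_{\Lambda_R}(2\Lambda_W)=A_R \operatorname{Diag}\bigl(2\ell/\gcd(\ell,2d_i^W)\bigr)\Lambda_W^\vee$ directly rather than through the Smith form of $2\Lambda_W$), and the equivalence with~(\ref{LusztigsChiceLambda'equation}) is then read off by comparing diagonal entries, with the special cases handled by the same elementary arithmetic. The unimodular base-change compatibility that you flag as the ``technical heart'' is in fact passed over silently in the paper's one-line conclusion, so your write-up is, if anything, more explicit about where care is needed.
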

\begin{proof}We f\/irst note that in our cases, $\Lambda'=\operatorname{Cent}_{\Lambda_R}(\Lambda_1 + \Lambda_2)=\operatorname{Cent}_{\Lambda_R}(\Lambda)$. We have
\begin{gather*}
2\operatorname{Cent}_{\Lambda_R}(2\Lambda_W) = 2\big(\Lambda_R \cap \widehat{2\Lambda_W}\big) \\
\hphantom{2\operatorname{Cent}_{\Lambda_R}(2\Lambda_W)}{} =A_R2\left(\Lambda_W^\vee \cap A_W^{-1} \frac{\ell}{2}\Lambda_W^\vee\right) =A_R \operatorname{Diag}\left( \frac{2 \ell}{\gcd(\ell,2d_i^W)}\right) \Lambda_W^\vee. 		
	\end{gather*}
By Lemma \ref{lm:ExplicitFormOfCentralizers}, this coincides with $\Lambda'$ if and only if equation~(\ref{LusztigsChiceLambda'equation}) holds.
\end{proof}

\section[Factorizability of quantum group $R$-matrices]{Factorizability of quantum group $\boldsymbol{R}$-matrices}\label{section5}

We f\/irst recall the def\/inition of factorizable braided tensor categories and factorizable Hopf algebras, respectively.

\begin{Definition}[\cite{EGNO15}]\label{def:FactorizabilityOfCats}
A braided tensor category $\mathcal{C}$ is \emph{factorizable} if the canonical braided tensor functor $G\colon \mathcal{C}\boxtimes \mathcal{C}^\text{op} \to \mathcal{Z}(\mathcal{C})$ is an equivalence of categories.
\end{Definition}

In \cite{Sch01}, Schneider gave a dif\/ferent characterization of factorizable Hopf algebras in terms of its Drinfeld double, leading to the following theorem:

\begin{Definition}\label{def:FactorizableHopfAlgs}A f\/inite-dimensional quasitriangular Hopf algebra $(H,R)$ is called \emph{factorizable} if its \emph{monodromy matrix} $M:=R_{21}\cdot R \in H\otimes H$ is non-degenerate, i.e., the following linear map is bijective
	\begin{gather*}
		H^* \to H,\qquad \phi\mapsto (\operatorname{id}\otimes \phi)(M).
	\end{gather*}
	Equivalently, this means we can write $M=\sum_i R_1^i\otimes R_2^i$ for two basis' $R_1^i,R_2^j \in H$.
\end{Definition}
\begin{Theorem}\label{thm:FactorizabilityOfCats}
Let $(H,R)$ be a finite-dimensional quasitriangular Hopf algebra. Then the ca\-te\-gory of finite-dimensional $H$-modules $H-\mathsf{mod}_{fd}$ is factorizable if and only if $(H,R)$ is a~factorizable Hopf algebra.
\end{Theorem}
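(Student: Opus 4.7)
The plan is to pass from the categorical statement about $G$ to an equivalent statement about a Hopf algebra map between $D(H)$ and $H\otimes H^{\mathrm{op}}$, and then identify that map with the Drinfeld map determined by the monodromy $M=R_{21}R$. I would use the well-known equivalence of braided tensor categories $\mathcal{Z}(H\text{-}\mathsf{mod}_{fd})\simeq D(H)\text{-}\mathsf{mod}_{fd}$, as well as the fact that $H\text{-}\mathsf{mod}_{fd}\boxtimes H\text{-}\mathsf{mod}_{fd}^{\mathrm{op}}\simeq (H\otimes H^{\mathrm{op}})\text{-}\mathsf{mod}_{fd}$.

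First, I would spell out the canonical braided functor $G$ in the present situation. The $R$-matrix gives a braided functor $\mathcal{F}_+:H\text{-}\mathsf{mod}_{fd}\to \mathcal{Z}(H\text{-}\mathsf{mod}_{fd})$ sending $V$ to $V$ equipped with the half-braiding determined by $R$, while the inverse braiding (from $R_{21}^{-1}$) yields $\mathcal{F}_-:H\text{-}\mathsf{mod}_{fd}^{\mathrm{op}}\to \mathcal{Z}(H\text{-}\mathsf{mod}_{fd})$. Combining them gives $G=\mathcal{F}_+\boxtimes \mathcal{F}_-$. Under the equivalence with $D(H)\text{-}\mathsf{mod}_{fd}$, the functor $\mathcal{F}_\pm$ corresponds to restriction of scalars along a Hopf algebra map; explicitly, $\mathcal{F}_\pm$ is induced by a map $D(H)\to H$ that restricts to $\mathrm{id}_H$ on the $H$-subalgebra of $D(H)$ and restricts on $H^*\subset D(H)$ to the Drinfeld maps $\phi_{\pm}:H^*\to H$ defined by $\phi_+(f)=(f\otimes \mathrm{id})(R_{21})$ and $\phi_-(f)=(\mathrm{id}\otimes f)(R^{-1})$. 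Packaging these together yields a Hopf algebra map
\[
\Phi\colon D(H)\longrightarrow H\otimes H^{\mathrm{op}},\qquad f\mapsto \phi_+(f_{(1)})\otimes \phi_-(f_{(2)}),\qquad h\mapsto \Delta(h),
\]
such that $G$ is equivalent to restriction of scalars along $\Phi$.

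Next, I would argue that $G$ is an equivalence of braided tensor categories if and only if $\Phi$ is an isomorphism of Hopf algebras. For finite-dimensional Hopf algebras the two sides have equal vector space dimension $(\dim H)^2$, so this amounts to bijectivity of $\Phi$. A direct computation, using $M=R_{21}R$ and the multiplicativity of the $\phi_\pm$, identifies the composition of $\Phi$ with the multiplication $H\otimes H^{\mathrm{op}}\to H$ (applied after suitable antipode twists) with the Drinfeld monodromy map $\phi_M\colon H^*\to H$, $\phi_M(f)=(f\otimes \mathrm{id})(M)$. From this one sees that $\Phi$ is bijective if and only if $\phi_M$ is bijective, which is exactly the non-degeneracy condition in Definition~\ref{def:FactorizableHopfAlgs}.

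The main technical obstacle is keeping conventions straight in the identification of $\Phi$ with the Drinfeld map — different normalizations of the half-braiding ($R$ versus $R_{21}$, and the choice of opposite category versus category with opposite braiding) can shift $\phi_+$ to $\phi_-$ or introduce antipode twists, so a careful bookkeeping is required to verify that the composite really reproduces $\phi_M$ up to an isomorphism. Once this identification is pinned down, the equivalence reduces to the classical result of Schneider~\cite{Sch01} on factorizable Hopf algebras, and the theorem follows.
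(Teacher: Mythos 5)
The paper does not prove this statement at all: it is imported from the literature, introduced with the remark that Schneider's characterization of factorizability via the Drinfeld double ``leads to'' the theorem, and the reference \cite{Sch01} (together with Shimizu's later Theorem~\ref{thm_Shimizu}) is doing all the work. So there is no in-paper argument to compare against; what you have written is an outline of the standard proof that the cited reference supplies. Your route --- identify $\mathcal{Z}(H-\mathsf{mod}_{fd})$ with $D(H)-\mathsf{mod}_{fd}$, realize $G$ as restriction along the Hopf algebra map $\Phi=(\pi_+\otimes\pi_-)\circ\Delta_{D(H)}\colon D(H)\to H\otimes H$ built from the two projections $\pi_\pm$ determined by $R$ and $R_{21}^{-1}$, and then invoke Schneider's theorem that $\Phi$ is bijective if and only if the monodromy map $\phi_M$ is --- is exactly the intended argument, and it is correct as a reduction.

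Two steps in your sketch are thinner than they should be if this were meant as a self-contained proof. First, the claim that ``$G$ is an equivalence iff $\Phi$ is an isomorphism'' needs a word: an equivalence given specifically by restriction of scalars forces $\ker\Phi$ to annihilate every module in the essential image, hence $\ker\Phi=0$, and then the dimension count $\dim D(H)=(\dim H)^2$ finishes it; without noting that the functor is restriction (not an arbitrary equivalence) the implication is not automatic. Second, and more substantively, the inference ``the composite of $\Phi$ with multiplication (after antipode twists) is $\phi_M$, hence $\Phi$ is bijective iff $\phi_M$ is'' does not follow from the computation alone: multiplication $H\otimes H\to H$ is far from injective, so bijectivity of the composite neither implies nor is implied by bijectivity of $\Phi$ without further input. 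Schneider's actual argument uses that $\Phi$ restricts to $\Delta$ on the copy of $H$, that $H\otimes H$ is free over the Hopf subalgebra $\Delta(H)$ (Nichols--Zoeller), and that the induced map on the quotient $(H\otimes H)/(H\otimes H)\Delta(H)^+\cong H$ is $\phi_M$ up to units; that freeness is the missing ingredient that turns your computation into an equivalence of bijectivity. Since you explicitly defer to \cite{Sch01} at the end, this is acceptable as an outline, but be aware that the quoted ``direct computation'' is not by itself sufficient.
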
	
	
Shimizu~\cite{Shi16} has recently proven a number of equivalent characterizations of factorizability for arbitrary (in particular non-semisimple) braided tensor categories. Besides the two previous characterizations (equivalence to Drinfeld center and nondegeneracy of the monodromy matrix), factorizability is equivalent to the fact that the so-called transparent objects are all trivial, see Theorem~\ref{thm_Shimizu} below, which will become visible during our analysis later.
	
\subsection[Monodromy matrix in terms of $R_0$]{Monodromy matrix in terms of $\boldsymbol{R_0}$}\label{section5.1}

In order to obtain conditions for the factorizability of the quasitriangular small quantum groups $(u_q(\g,\Lambda,\Lambda'),R_0(f)\bar{\Theta})$ as in Theorem \ref{thm:R0} in terms of $\g$, $q$, $\Lambda$ and $f$, we start by calculating the monodromy matrix $M:=R_{21} \cdot R \in
u_q(\g,\Lambda,\Lambda')\otimes u_q(\g,\Lambda,\Lambda')$ in general as far as possible:
\begin{Lemma}
 For $R=R_0(f)\bar\Theta$ as in Theorem~{\rm \ref{thm:R0}}, the factorizability of $R$ is equivalent to the invertibility of the following
 complex-valued matrix~$m$ with entries indexed by elements in $\mu,\nu\in \Lambda/\Lambda'$:
 \begin{gather*}
 	m_{\mu,\nu}:=\sum_{\mu',\nu'\in\Lambda/\Lambda'}f(\mu-\mu',\nu-\nu')f(\nu',\mu').
 \end{gather*}
\end{Lemma}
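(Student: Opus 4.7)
My approach is to compute $M=R_{21}R=(R_0)_{21}\bar\Theta_{21}R_0\bar\Theta$ directly and to use the triangular decomposition $u_q(\g,\Lambda,\Lambda')=u^-u^0u^+$ to isolate a finite-dimensional ``Cartan block'' whose invertibility characterises factorisability. The first step is to commute $\bar\Theta_{21}$ past $R_0$: since $(\Theta_\beta)_{21}\in u^+_\beta\otimes u^-_\beta$ and $K_\mu$ acts on $u^\pm_\beta$ by the scalar $q^{\pm(\mu,\beta)}$, this commutation produces only $q$-powers, which, via the functional equations $f(\mu+\alpha,\nu)=q^{-(\nu,\alpha)}f(\mu,\nu)$ of Theorem~\ref{thm:R0}, can be absorbed into weight-shifts of the labels of $R_0$. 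The outcome is an expression for $M$ as a double sum over $\beta,\beta'\in\Lambda_R^{\geq 0}$ and over $\mu,\nu,\mu',\nu'\in\Lambda/\Lambda'$, whose first tensor factor lies in $u^+_\beta\,u^0\,u^-_{\beta'}$ and whose second factor lies in $u^-_\beta\,u^0\,u^+_{\beta'}$.

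Next, the tensor square $u\otimes u$ carries a $\Lambda_R\times\Lambda_R$-bigrading from the weights of the two factors, and the linear map $\phi\mapsto(\operatorname{id}\otimes\phi)(M)$ of Definition~\ref{def:FactorizableHopfAlgs} respects this bigrading. The key structural input is that, by Theorem~\ref{thm:Theta}(b), $\bar\Theta$ is assembled from dual PBW bases for the perfect pairing $(\,\cdot\,,\,\cdot\,)\colon u^-_\beta\otimes u^+_\beta\to\C$. Hence, in each weight block with $(\beta,\beta')\neq(0,0)$, the contribution of $\bar\Theta_{21}\bar\Theta$ to $M$ realises the canonical isomorphism between $u^+_\beta$ and $(u^-_\beta)^*$, so the corresponding block of $\phi\mapsto(\operatorname{id}\otimes\phi)(M)$ is automatically bijective. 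This is the same mechanism that makes every Drinfeld double factorisable.

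All factorisability obstructions are therefore concentrated in the weight-zero block $\beta=\beta'=0$, where $\bar\Theta_{21}$ and $\bar\Theta$ both degenerate to $1\otimes 1$ and $M$ reduces to $(R_0)_{21}R_0=\sum_{\mu,\nu,\mu',\nu'}f(\mu,\nu)f(\mu',\nu')K_{\nu+\mu'}\otimes K_{\mu+\nu'}$. Setting $a=\nu+\mu'$ and $b=\mu+\nu'$, the coefficient of $K_a\otimes K_b$ becomes $\sum_{\mu',\nu'}f(\mu',\nu')f(a-\nu',b-\mu')$, which after relabelling $\mu'\leftrightarrow\nu'$ is exactly the entry $m_{a,b}$ in the statement. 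Consequently, the weight-zero block of $\phi\mapsto(\operatorname{id}\otimes\phi)(M)$ is invertible if and only if the complex matrix $m$ is, which proves the lemma.

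The main obstacle is making the second step rigorous: one must argue that the higher-weight blocks of $M$ are automatically non-degenerate. This rests on upgrading the intertwining property $\Delta(x)\Theta=\Theta\bar\Delta(x)$ of Theorem~\ref{thm:Theta}(a) to an explicit statement that $\bar\Theta_{21}\bar\Theta$ produces, after commuting all Cartan factors to one side, the canonical element of the perfect PBW pairing in every weight block. Once this is done, the entire factorisability question for $R=R_0(f)\bar\Theta$ reduces to the finite linear-algebra problem of invertibility of $m$ on $\C[\Lambda/\Lambda']\otimes\C[\Lambda/\Lambda']$, exactly as claimed.
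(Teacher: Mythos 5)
Your overall route is the same as the paper's: compute $M=(R_0)_{21}\bar\Theta_{21}R_0\bar\Theta$ explicitly, use the two triangular decompositions to split $u\otimes u$ into weight blocks indexed by $(\beta,\beta')\in\Lambda_R^+\times\Lambda_R^+$, observe that $\phi\mapsto(\operatorname{id}\otimes\phi)(M)$ is block-diagonal for this decomposition, and strip off the PBW part of each block using the duality of Theorem~\ref{thm:Theta}(b). Your identification of the weight-zero block with the matrix $m$ (up to a transposition/relabelling, which does not affect invertibility) is also correct.

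The gap is in your treatment of the blocks with $(\beta,\beta')\neq(0,0)$. Each such block is not the canonical element of the PBW pairing alone; it is the \emph{product} of that canonical element with a Cartan element
\begin{gather*}
M_0^{\beta}=\sum_{\mu_1,\nu_1,\mu_2,\nu_2}f(\mu_1,\nu_1)f(\mu_2,\nu_2)\,q^{(\beta,\nu_2-\mu_2)}\,K_{\nu_1+\mu_2}\otimes K_{\mu_1+\nu_2},
\end{gather*}
so the corresponding block of $\phi\mapsto(\operatorname{id}\otimes\phi)(M)$ is a Kronecker product of the (indeed invertible) PBW duality with the matrix $m^{\beta}_{\mu,\nu}=\sum_{\mu',\nu'}f(\mu-\mu',\nu-\nu')f(\nu',\mu')q^{(\beta,\mu'-\nu')}$. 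Such a block is bijective if and only if $m^{\beta}$ is invertible; it is \emph{not} automatically bijective, and the Drinfeld-double heuristic does not apply because the Cartan factor cannot be discarded. Concretely, if $m$ is degenerate then so is every $m^{\beta}$ (they differ only by argument shifts and nonzero scalars), so the positive-weight blocks fail together with the zero block rather than being unconditionally fine. What is actually needed to close the argument---and what the paper supplies---is the statement that $m^{\beta}$ and $m=m^{0}$ have the same invertibility: one uses the functional equations~(\ref{f01}) of Theorem~\ref{thm:R0} to trade the factors $q^{(\beta,\mu')}$ and $q^{-(\beta,\nu')}$ for shifts $\nu'\mapsto\nu'+\beta$, $\nu\mapsto\nu+\beta$ in the arguments of $f$, and then observes that permuting rows and columns and multiplying by a nonzero constant preserves invertibility. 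You gesture at exactly this absorption in your first step, but you do not deploy it where it is needed; as written, the implication ``$m$ invertible $\Rightarrow$ $R$ factorizable'' is not established, and the proposed fix (upgrading the intertwining property of $\bar\Theta$) would still leave the Cartan factor of each higher block unexamined.
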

\begin{proof}
 We f\/irst plug in the expressions for $R_0$ from Theorem~\ref{thm:solutionsgrpeq} and $\bar\Theta$ from Theorem~\ref{thm:R0} and
 simplify:
 \begin{gather*}
 M:=R_{21} \cdot R =(R_0)_{21}\cdot \bar\Theta_{21} \cdot R_0\cdot \bar\Theta\\
 \hphantom{M}{} =\left(\sum_{\mu_1,\nu_1\in \Lambda} f(\mu_1,\nu_1)K_{\nu_1}\otimes
 K_{\mu_1}\right) \left( \sum_{\beta_1\in\Lambda_R^+}(-1)^{{\rm tr} \beta_1} q_{\beta_1}
 \sum_{b_1\in B_{\beta_2}}b_1^{*+}\otimes b_1^- \right)\\
 \hphantom{M=}{}
 \times \left(\sum_{\mu_2,\nu_2\in \Lambda} f(\mu_2,\nu_2)K_{\mu_2}\otimes
 K_{\nu_2}\right) \left( \sum_{\beta_2\in\Lambda_R^+}(-1)^{{\rm tr} \beta_2} q_{\beta_2}
 \sum_{b_2\in B_{\beta_2}}b_2^-\otimes b_2^{*+}\right)\\
 \hphantom{M}{}
 =\sum_{\beta_1,\beta_2\in\Lambda_R^+}\!(-1)^{{\rm tr} \beta_1+\beta_2} q_{\beta_1}q_{\beta_2}\! \left(\sum_{\mu_1,\mu_2,\nu_1,\nu_2\in \Lambda}\!
 f(\mu_1,\nu_1)f(\mu_2,\nu_2)q^{\beta_1(\nu_2-\mu_2)}K_{\nu_1+\mu_2}\otimes K_{\mu_1+\nu_2} \right)\! \\
 \hphantom{M=}{}
 \times \left(\sum_{b_1\in B_{\beta_1},b_2\in B_{\beta_2}} b_1^{*+}b_2^{-}\otimes b_1^{-}b_2^{*+}\right),
 \end{gather*}
 where $\Lambda_R^+=\N_0[\Delta]$. The last equation holds since $b_1^-\in u_{\beta_1}^-$ and hence fulf\/ills $K_{\nu_2}b_1^-=q^{-\beta_1\nu_2}b_1^-K_{\nu_2}$ and similarly for~$b_1^{*+}$. We have two triangular decompositions
 \begin{gather*}
 	 u_q=u_q^0u_q^-u_q^+,\qquad u_q=u_q^0u_q^+u_q^- ,
 \end{gather*}
 and the $\Lambda_R^+$-gradation on $u_q^{\pm}$ induces a gradation
 \begin{gather*}
 	u_q \otimes u_q \cong \bigoplus_{\beta_1,\beta_2} \big(u^0\otimes u^0\big)\big({u_q^+}_{\beta_1}{u_q^-}_{\beta_2}\otimes {u_q^-}_{\beta_1}{u_q^+}_{\beta_2}\big).
 \end{gather*}
 The factorizability of $R$ is equivalent to the invertibility of $M$ interpreted as a matrix indexed by the PBW basis. The grading implies a block matrix form of $M$, so the invertibility $M$ is equivalent to the invertibility of $M^{\beta_1,\beta_2} \in (u_q \otimes u_q)_{(\beta_1,\beta_2)}$ for every $\beta_1,\beta_2 \in \Lambda_R^+$ as follows
 \begin{gather*}
 	M^{\beta_1,\beta_2}:= \left(\sum_{\mu_1,\mu_2,\nu_1,\nu_2\in \Lambda}
 	f(\mu_1,\nu_1)f(\mu_2,\nu_2)q^{\beta_1(\nu_2-\mu_2)}K_{\nu_1+\mu_2}\otimes K_{\mu_1+\nu_2} \right)\\
 \hphantom{M^{\beta_1,\beta_2}:=}{}\times
 	\left(\sum_{b_1\in B_{\beta_1},b_2\in B_{\beta_2}} 	b_1^{*+}b_2^{-}\otimes b_1^{-}b_2^{*+}\right).
 \end{gather*}
 Since the second sum in $M^{\beta_1,\beta_2}$ runs over a basis in ${u_q^+}_{\beta_1}{u_q^-}_{\beta_2}\otimes {u_q^-}_{\beta_1}{u_q^+}_{\beta_2}$,
 the invertibility of~$M$ is equivalent to the invertibility for all $\beta_1 \in\Lambda_R^+$ the following element:
 \begin{gather*}
 M_0^{\beta_1}:=\sum_{\mu_1,\mu_2,\nu_1,\nu_2\in \Lambda/\Lambda'}q^{\beta_1(\nu_2-\mu_2)}
 f(\mu_1,\nu_1)f(\mu_2,\nu_2)K_{\nu_1+\mu_2}\otimes K_{\mu_1+\nu_2} \\
 \hphantom{M_0^{\beta_1}}{} =\sum_{\mu,\nu\in\Lambda/\Lambda'} K_\nu \otimes K_\mu
 \cdot \left(\sum_{\mu',\nu'\in\Lambda/\Lambda'} q^{\beta_1(\mu'-\nu')}
 f(\mu-\mu',\nu-\nu')f(\nu',\mu')\right).
 \end{gather*}
 Since $K_\nu\otimes K_\mu$ is a vector space basis of $u_q^0\otimes u_q^0=\C[\Lambda/\Lambda']\otimes\C[\Lambda/\Lambda']$, this in turn is
 equivalent to the invertibility of the following family of matrices $m^{\beta_1}$ for all $\beta_1\in\Lambda_R^+$ with rows/columns indexed by elements in $\mu,\nu\in \Lambda/\Lambda'$:
 \begin{gather*}
 	m^{\beta_1}_{\mu,\nu}:=\sum_{\mu',\nu'\in\Lambda/\Lambda'}	f(\mu-\mu',\nu-\nu')f(\nu',\mu')q^{\beta_1(\mu'-\nu')}.
 \end{gather*}
 We now use the fact that $R$ was indeed an $R$-matrix. By property~(\ref{f01}) in Theorem~\ref{thm:R0} we have
 \begin{gather*}
 	m^{\beta_1}_{\mu,\nu}=\sum_{\mu',\nu'\in\Lambda/\Lambda'}f(\mu-\mu',\nu-\nu')f(\nu'+\beta_1,\mu')q^{-\beta_1\nu'}.
 \end{gather*}
Since the invertibility of a matrix $m_{\mu,\nu}$ is equivalent to the invertibility of any matrix $m_{\mu,\nu+\beta_1}$, we may substitute $\nu'\mapsto \nu'+\beta_1$, $\nu\mapsto \nu+\beta_1$, pull the constant factor $q^{-\beta_1^2}$ in front (which also does not af\/fect invertibility) and hence eliminate the f\/irst $\beta_1$ from the condition. Hence the invertibility of $R$ is equivalent to the invertibility of the following family of matrices $\tilde{m}^{\beta_1}$ for all $\beta_1\in\Lambda_R^+$:
 \begin{gather*}
 	\tilde{m}^{\beta_1}_{\mu,\nu}:=\sum_{\mu',\nu'\in\Lambda/\Lambda'}	f(\mu-\mu',\nu-\nu')f(\nu',\mu')q^{-\beta_1\nu'}.
 \end{gather*}
We may now use the same procedure to eliminate the second $\beta_1$, hence the invertibility of $R$ is equivalent to the invertibility of the following matrix with rows/columns induced by elements in $\mu,\nu\in \Lambda/\Lambda'$:	
 \begin{gather*}
 	m_{\mu,\nu}:=\sum_{\mu',\nu'\in\Lambda/\Lambda'}f(\mu-\mu',\nu-\nu')f(\nu',\mu').
 \end{gather*}
 This was the assertion we wanted to prove.
\end{proof}

\begin{Definition}Let $g\colon G_1 \times G_2 \to \mathbb{C}^\times$ be a group pairing. It induces a symmetric form on the product $G_1 \times G_2$ we denote by $\operatorname{Sym}(g)$:
	\begin{align*}
	\operatorname{Sym}(g)\colon \ & (G_1 \times G_2)^{\times 2} \longrightarrow \mathbb{C}^\times, \\
	& ((\mu_1,\mu_2),(\nu_1,\nu_2)) \longmapsto g(\mu_1,\nu_2)g(\nu_1,\mu_2).
	\end{align*}
\end{Definition}

\begin{Lemma}\label{lm:Sym(f)nondeg}
	If $g\colon G_1 \times G_2 \to \mathbb{C}^\times$ is a perfect pairing of abelian groups, then the symmetric form $\operatorname{Sym}(g)$ is perfect.
\end{Lemma}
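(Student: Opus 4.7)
The plan is to exploit the fact that $\operatorname{Sym}(g)$ is symmetric and that perfectness of $g$ means both its left and right radicals vanish. I would simply test the defining condition of the radical of $\operatorname{Sym}(g)$ against pairs of the special form $(\nu_1,0)$ and $(0,\nu_2)$, and read off each coordinate separately.

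More precisely, suppose $(\mu_1,\mu_2) \in G_1 \times G_2$ lies in the radical of $\operatorname{Sym}(g)$, i.e., $g(\mu_1,\nu_2)g(\nu_1,\mu_2) = 1$ for every $(\nu_1,\nu_2) \in G_1 \times G_2$. Setting $\nu_1 = 0$ gives $g(\mu_1,\nu_2)=1$ for all $\nu_2 \in G_2$, so $\mu_1 \in \operatorname{Rad}_L(g) = 0$ by the assumed perfectness of $g$. Setting $\nu_2 = 0$ gives $g(\nu_1,\mu_2)=1$ for all $\nu_1 \in G_1$, so $\mu_2 \in \operatorname{Rad}_R(g) = 0$. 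Hence $(\mu_1,\mu_2) = (0,0)$, proving that the (left) radical of $\operatorname{Sym}(g)$ is trivial. Since $\operatorname{Sym}(g)$ is manifestly symmetric in its two arguments, the right radical vanishes by the same argument, so $\operatorname{Sym}(g)$ is perfect.

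There is no real obstacle here: the argument is essentially a one-line reduction that separates the two coordinates by specializing one of $\nu_1$, $\nu_2$ to zero, and then invokes the hypothesis that both radicals of $g$ are trivial. The only thing to check is that the construction $\operatorname{Sym}(g)$ as defined is indeed symmetric, which is immediate from swapping $(\mu_1,\mu_2)$ with $(\nu_1,\nu_2)$ in $g(\mu_1,\nu_2)g(\nu_1,\mu_2)$.
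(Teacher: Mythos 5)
Your proof is correct. It does, however, take a different (more elementary) route than the paper: the paper observes that $g\times g$ is an isomorphism $G_1\times G_2\to \widehat{G_2}\times\widehat{G_1}$ by perfectness of $g$, and that $\operatorname{Sym}(g)$ is exactly this map composed with the canonical identification $\widehat{G_2}\times\widehat{G_1}\cong\widehat{G_1\times G_2}$, so perfectness is inherited structurally. You instead compute the radical directly: for $(\mu_1,\mu_2)$ in the radical of $\operatorname{Sym}(g)$, specializing the test vector to $(\nu_1,\nu_2)=(0,\nu_2)$ kills the factor $g(\nu_1,\mu_2)$ (since $g$ is bimultiplicative, $g(0,\mu_2)=1$) and forces $\mu_1\in\operatorname{Rad}_L(g)=0$, and symmetrically $\mu_2\in\operatorname{Rad}_R(g)=0$; the symmetry of $\operatorname{Sym}(g)$ then handles the right radical for free. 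Both arguments are complete and short. Yours is self-contained and uses nothing beyond the definitions; the paper's has the advantage of exhibiting $\operatorname{Sym}(g)$ as the dualized pairing, a structural picture that is reused implicitly when the radical of $\operatorname{Sym}\big(\hat f\big)$ is analyzed later in Section~\ref{section5}. Either proof is acceptable here.
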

\begin{proof}By assumption, $g \times g$ def\/ines an isomorphism between $G_1 \times G_2$ to $\widehat{G_2} \times \widehat{G_1}$. The symmetric form $\operatorname{Sym}(g)$ is given by the composition of this isomorphism with the canonical isomorphism $\widehat{G_2} \times \widehat{G_1} \cong \widehat{G_1 \times G_2}$. This proves the claim.
\end{proof}

Consider for a f\/inite abelian group $G$ and subgroups $G_1,G_2 \leq G$ the canonical exact sequence
\begin{gather}\label{al:exactSequence}
0\to G_1 \cap G_2 \to G_1 \times G_2\to G_1+G_2 \to 0.
\end{gather}
For $\mu \in G_1+G_2$, we denote its f\/iber by
\begin{gather*}
	(G_1 \times G_2)_{\mu}:=\{(\mu_1,\mu_2)\in G_1 \times G_2 \,|\, \mu_1+\mu_2=\mu \}.
\end{gather*}
Moreover, we def\/ine
\begin{gather*}
\operatorname{Rad}:= \big\{ (\mu_1,\mu_2) \in G_1 \times G_2 \,|\, \operatorname{Sym}\big(\hat{f}\big)((\mu_1,\mu_2),x)=1 \; \forall\, x \in (G_1 \times G_2)_0 \big\}, \\
\operatorname{Rad}_\mu:= \operatorname{Rad} \cap (G_1 \times G_2)_{\mu}, \\
\operatorname{Rad}_0^\perp := \{ \mu_1 + \mu_2 \in G \,|\, (\mu_1,\mu_2) \in \operatorname{Rad} \}.
\end{gather*}

\begin{Lemma}We have two split exact sequences:
		\begin{gather*}
		0\to \operatorname{Rad}_0 \to \operatorname{Rad}\to \operatorname{Rad}_0^\perp \to 0,\\
		0\to \operatorname{Rad}_0^\perp \to G \to \operatorname{Rad}_0 \to 0.
		\end{gather*}
\end{Lemma}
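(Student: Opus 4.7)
My plan is to obtain both sequences from the perfect-pairing property of $\operatorname{Sym}(\hat f)$ (Lemma~\ref{lm:Sym(f)nondeg}) together with the canonical short exact sequence~(\ref{al:exactSequence}). For the first sequence, I would simply restrict~(\ref{al:exactSequence}) to the subgroup $\operatorname{Rad} \leq G_1 \times G_2$: the kernel of the addition map $\operatorname{Rad} \to G_1+G_2$ is $\operatorname{Rad} \cap (G_1 \times G_2)_0 = \operatorname{Rad}_0$ by definition, and its image is $\operatorname{Rad}_0^\perp$ by definition. Exactness at each of the three terms is then immediate.

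For the second sequence, I would first use perfectness to count: since $\operatorname{Rad}$ is the annihilator of the subgroup $(G_1 \times G_2)_0$ under the perfect pairing $\operatorname{Sym}(\hat f)$, we have $|\operatorname{Rad}| = |G_1 \times G_2|/|(G_1 \times G_2)_0| = |G_1+G_2|$, which combined with the first sequence yields $|\operatorname{Rad}_0|\cdot|\operatorname{Rad}_0^\perp| = |G|$ in the paper's setting, where $G = G_1+G_2$. To construct the surjection $G \twoheadrightarrow \operatorname{Rad}_0$ with kernel $\operatorname{Rad}_0^\perp$, I would apply the standard identity $(H+H^\perp)^\perp = H \cap H^\perp$ with $H=(G_1\times G_2)_0$: this yields a canonical isomorphism $(G_1 \times G_2)/((G_1 \times G_2)_0 + \operatorname{Rad}) \cong \widehat{\operatorname{Rad}_0}$, and the left-hand side descends through the addition quotient $G_1 \times G_2 \twoheadrightarrow G$ (whose kernel is contained in $(G_1 \times G_2)_0$) to $G/\operatorname{Rad}_0^\perp$. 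A noncanonical duality isomorphism $\widehat{\operatorname{Rad}_0} \cong \operatorname{Rad}_0$ then produces the required map.

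The splittings I would construct from the pairing structure: the natural inclusion $\operatorname{Rad}_0 \hookrightarrow G_1 \cap G_2 \hookrightarrow G$ is a candidate section of the second sequence, and a section of the first sequence is obtained by lifting $\operatorname{Rad}_0^\perp$ into $\operatorname{Rad}$ along the pairing-induced identification $\operatorname{Rad}/\operatorname{Rad}_0 \cong \operatorname{Rad}_0^\perp$. The main technical obstacle is verifying genuine splitting: for finite abelian groups, exact sequences need not split, and one must invoke the specific structure of $\hat f(\mu,\nu) = q^{-(\mu,\nu)} g(\bar\mu,\bar\nu)$ to force $\operatorname{Rad}_0 \cap \operatorname{Rad}_0^\perp = 0$ inside $G$ (which is exactly the condition that the candidate inclusion is a genuine section). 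I would treat this $p$-locally, decomposing $G$ into $p$-primary components and using that on each component $\operatorname{Sym}(\hat f)$ restricts to a perfect symmetric pairing whose radical splits off as a direct summand.
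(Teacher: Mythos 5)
You prove exactness of both sequences correctly, and your route to the second one is cleaner and more uniform than the paper's: where the paper only records $\operatorname{Rad}=\ker\bigl(\hat\iota\circ\operatorname{Sym}\bigl(\hat f\bigr)\bigr)\cong\hat G\cong G$ and then discusses the map $G\to\operatorname{Rad}_0$ case by case, your annihilator identity $(H+H^\perp)^\perp=H\cap H^\perp$ for $H=(G_1\times G_2)_0$ produces $G/\operatorname{Rad}_0^\perp\cong\widehat{\operatorname{Rad}_0}$ in one canonical step. The first sequence is, as you and the paper both observe, exact by definition.

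The genuine gap is the splitting, and the mechanism you propose cannot be repaired. The condition $\operatorname{Rad}_0\cap\operatorname{Rad}_0^\perp=0$ that you want to force is false in cases the lemma covers: take $\g=A_1$, $\Lambda=\Lambda_R$, $\ell=8$, so that $\Lambda'=4\Z\alpha$ and, identifying $G=G_1=G_2$ with $\Z_4$ via $\alpha\mapsto 1$, the pairing $\hat f(m,n)=q^{-2mn}=i^{-mn}$ is perfect and symmetric. Then $\operatorname{Rad}$ is the diagonal copy of $\Z_4$, $\operatorname{Rad}_0=\{0,2\}$ is the $2$-torsion, and $\operatorname{Rad}_0^\perp=2G=\{0,2\}$; hence $\operatorname{Rad}_0=\operatorname{Rad}_0^\perp$ and both sequences read $0\to\Z_2\to\Z_4\to\Z_2\to0$, which does not split. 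In particular the $2$-primary part of the radical does not split off as a direct summand, contrary to your $p$-local heuristic, and your candidate section of the first sequence (``lifting along $\operatorname{Rad}/\operatorname{Rad}_0\cong\operatorname{Rad}_0^\perp$'') is circular, since that identification is precisely the exactness you already proved. The same failure occurs whenever $G$ has a cyclic direct factor of order divisible by $4$, e.g., for any $\ell\equiv 0$ mod $4$. For comparison, the paper argues the splitting by a three-way case distinction: symmetric $\hat f$, where $\operatorname{Rad}_0$ is the $2$-torsion of $G$ and splitting is simply asserted (this is exactly where the $\Z_4$ obstruction bites); antisymmetric $g$ for $D_{2n}$, where $\operatorname{Rad}_0\cong\Z_2^{k\geq 2}$ and an explicit candidate section $x\mapsto\sum_{\tilde x\in\operatorname{Rad}_x}\tilde x$ is written down; and $G_1\neq G_2$, where $G\cong(G_1\cap G_2)\times\pi_1$. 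So you have in fact isolated a real defect in the splitness claim rather than merely a hole in your own write-up; note that the paper only uses the splitting to define $\operatorname{Sym}_G\bigl(\hat f\bigr)$, which it needs only when $\operatorname{Rad}_0=0$, where the splitting is vacuous.
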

\begin{proof}The f\/irst sequence is exact by def\/inition of the three groups. Moreover, we know
\begin{gather*}
\operatorname{Rad} = \ker\big(\hat{\iota} \circ \operatorname{Sym}\big(\hat{f}\big)\big) \cong \ker(\hat{\iota}) = \text{im}(\hat{\pi}) \cong \hat{G} \cong G,
\end{gather*}
where $\hat{\iota}$, $\hat{\pi}$ denote the duals of the inclusion and projection in~(\ref{al:exactSequence}). In Example~\ref{ex:RadSymf} we will see that in the case $G_1=G_2=G$, $\hat{f}$ symmetric, $\operatorname{Rad}_0$ is the $2$-torsion subgroup of~$G$, and the second map in the second exact sequence is just the projection, hence both diagrams split in this case. If $\hat{f}$ is asymmetric, we will see in Section~\ref{section5.3} that $\operatorname{Rad}_0$ is isomorphic to $\Z_2^k$ for some $k\geq 2$, thus
\begin{gather*}
\operatorname{Rad}_0^\perp \longrightarrow \operatorname{Rad}, \qquad x \longmapsto \sum_{\tilde{x} \in \operatorname{Rad}_x} \tilde{x}
\end{gather*}
is a section of the f\/irst exact sequence. Here we used that the sum over all elements in $\Z_2^k$ vanishes. Again, it follows that both diagrams split. Finally, if $G_1 \neq G_2$ (i.e., in the case $D_{2n}$), then $\hat{f}=q^{-(\cdot,\cdot)}$ on $G_1 \cap G_2$. By the same argument as in Example~\ref{ex:RadSymf}, $\operatorname{Rad}_0$ is the $2$-torsion subgroup of $G_1 \cap G_2$. But we have $G \cong G_1 \cap G_2 \times \pi_1 $ in this case, hence both sequences split.
\end{proof}

\begin{Corollary}Using the projection $\alpha\colon G \to \operatorname{Rad}_0^\perp$ and the inclusion $\beta\colon \operatorname{Rad}_0^\perp \to \operatorname{Rad}$ from the above lemma, we can define a symmetric form on~$G$:
	\begin{align*}
	\operatorname{Sym}_G\big(\hat{f}\big)\colon \ & G \times G \longrightarrow \C^\times, \\
	& (\mu,\nu) \longmapsto \operatorname{Sym}\big(\hat{f}\big)(\beta\circ\alpha(\mu),\beta\circ\alpha(\nu)).
	\end{align*}
	Moreover, we have $\operatorname{Rad}\big(\operatorname{Sym}_G\big(\hat{f}\big)\big) \cong \operatorname{Rad}_0$.
\end{Corollary}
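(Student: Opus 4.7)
The symmetry of $\operatorname{Sym}_G(\hat{f})$ is immediate: it is the pullback of the manifestly symmetric pairing $\operatorname{Sym}(\hat{f})$ along the group homomorphism $\beta\circ\alpha: G\to G_1\times G_2$, and pullback preserves symmetry. The substantive content is the identification $\operatorname{Rad}(\operatorname{Sym}_G(\hat{f}))\cong\operatorname{Rad}_0$, and my plan is to reduce this to a non-degeneracy statement for the restriction of $\operatorname{Sym}(\hat{f})$ to $\beta(\operatorname{Rad}_0^\perp)$.

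First, since $\alpha$ is a retraction onto $\operatorname{Rad}_0^\perp$ and $\beta$ is injective, as $\nu$ ranges over $G$ the element $\beta\alpha(\nu)$ ranges over all of $\beta(\operatorname{Rad}_0^\perp)$. Thus $\mu\in \operatorname{Rad}(\operatorname{Sym}_G(\hat{f}))$ iff $\beta\alpha(\mu)$ lies in the radical of the restricted form $\operatorname{Sym}(\hat{f})|_{\beta(\operatorname{Rad}_0^\perp)\times\beta(\operatorname{Rad}_0^\perp)}$. Granted that this restricted form is non-degenerate, $\beta\alpha(\mu)=0$ is forced, hence $\alpha(\mu)=0$ by injectivity of $\beta$, so $\mu\in\ker\alpha$. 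The splitting of the second exact sequence in the preceding lemma identifies $\ker\alpha\cong G/\operatorname{Rad}_0^\perp\cong\operatorname{Rad}_0$, delivering the isomorphism claimed.

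To establish that $\operatorname{Sym}(\hat{f})|_{\beta(\operatorname{Rad}_0^\perp)}$ is non-degenerate I would proceed in two steps. By Lemma~\ref{lm:Sym(f)nondeg}, $\operatorname{Sym}(\hat{f})$ is perfect on $G_1\times G_2$. By the very definition of $\operatorname{Rad}$ we have $\operatorname{Rad}=(G_1\times G_2)_0^\perp$, and the standard double-annihilator identity for perfect pairings on finite abelian groups then gives $\operatorname{Rad}^\perp=(G_1\times G_2)_0$. Consequently the radical of the restriction $\operatorname{Sym}(\hat{f})|_{\operatorname{Rad}}$ is $\operatorname{Rad}\cap\operatorname{Rad}^\perp=\operatorname{Rad}\cap(G_1\times G_2)_0=\operatorname{Rad}_0$. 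The previous lemma supplies $\beta$ as a genuine group-theoretic section, realizing $\beta(\operatorname{Rad}_0^\perp)$ as a direct complement of $\operatorname{Rad}_0$ inside $\operatorname{Rad}$. For any symmetric pairing, the restriction to a complement of the radical is non-degenerate: if $c\in\beta(\operatorname{Rad}_0^\perp)$ were in the radical of the restriction, then $c$ would pair trivially with all of $\operatorname{Rad}=\operatorname{Rad}_0\oplus\beta(\operatorname{Rad}_0^\perp)$, forcing $c\in\operatorname{Rad}_0\cap\beta(\operatorname{Rad}_0^\perp)=0$.

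The main obstacle is purely bookkeeping, namely keeping track of which subgroup lives in $G_1\times G_2$ and which in $G$, and ensuring that both splittings used are $\mathbb{Z}$-linear (so that $\beta\circ\alpha$ is a homomorphism and $\beta(\operatorname{Rad}_0^\perp)$ is a genuine subgroup complement). The algebraic work is carried by the perfectness of $\operatorname{Sym}(\hat{f})$ together with the double-annihilator identity; the two splittings from the previous lemma are exactly what is needed to package this into a form on $G$ with the advertised radical.
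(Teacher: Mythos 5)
Your proof is correct, and it supplies details the paper leaves implicit: the Corollary is stated without proof, as an immediate consequence of the preceding lemma on the two split exact sequences. Your route is the natural one and matches the intended argument: perfectness of $\operatorname{Sym}\big(\hat{f}\big)$ (Lemma~\ref{lm:Sym(f)nondeg}) plus the double-annihilator identity give $\operatorname{Rad}\cap\operatorname{Rad}^\perp=\operatorname{Rad}\cap(G_1\times G_2)_0=\operatorname{Rad}_0$, so the restriction to the complement $\beta\big(\operatorname{Rad}_0^\perp\big)$ is non-degenerate and the radical of the pulled-back form is exactly $\ker\alpha\cong\operatorname{Rad}_0$.
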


\begin{Theorem}\label{thm:InvertyMonodromymat}
We have shown in Theorem~{\rm \ref{thm:R0}} and Lemma~{\rm \ref{lm:NondegGroupPairing}} that the assumption that $R=R_0(f)\bar\Theta$ is an $R$-matrix is equivalent to the existence of subgroups $G_1,G_2\subset \Lambda/\Lambda'$ of same order some~$d|\Lambda_R/\Lambda'|$ and~$f$ restricting up to a scalar to a non-degenerate pairing $\hat{f}\colon G_1\times G_2\to\C^\times$ and $f$ vanishes otherwise.

	In this notation the matrix $m$ as defined in the previous lemma can be rewritten as
\begin{gather*}
m_{\mu,\nu}=\frac{1}{d^2|\Lambda_R/\Lambda'|^2} \sum_{\substack{\tilde{\mu} \in (G_1 \times G_2)_{\mu}\\\tilde{\nu} \in (G_1 \times G_2)_{\nu}}} \operatorname{Sym}\big(\hat{f}\big)(\tilde{\mu},\tilde{\nu}).
\end{gather*}
It is invertible if and only if $\operatorname{Rad}_0=0$. In this case,
\begin{gather*}
m_{\mu,\nu} = \frac{|G_1 \cap G_2|}{d^2|\Lambda_R/\Lambda'|^2} \operatorname{Sym}_G\big(\hat{f}\big).
\end{gather*}
\end{Theorem}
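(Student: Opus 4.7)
The first identity in the statement is a direct substitution: inserting the explicit form $f = \frac{1}{d|\Lambda_R/\Lambda'|}\hat{f}$ from Theorem~\ref{thm:solutionsgrpeq} into the formula for $m_{\mu,\nu}$ in the previous lemma and reindexing via $\tilde\mu := (\mu - \mu', \mu')$, $\tilde\nu := (\nu', \nu - \nu')$, the support constraints on $f$ amount exactly to $\tilde\mu \in (G_1 \times G_2)_\mu$ and $\tilde\nu \in (G_1 \times G_2)_\nu$, while $f(\mu-\mu',\nu-\nu')f(\nu',\mu')$ collapses to $\frac{1}{d^2|\Lambda_R/\Lambda'|^2}\operatorname{Sym}(\hat f)(\tilde\mu,\tilde\nu)$ by the very definition of $\operatorname{Sym}$.

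For the invertibility analysis, the plan is to fix a set-theoretic section $s\colon G \to G_1 \times G_2$ of the projection $p(\mu_1,\mu_2)=\mu_1+\mu_2$ and to parameterize each fiber as $s(\mu) + (\xi,-\xi)$ with $\xi \in G_1 \cap G_2 \cong (G_1 \times G_2)_0$. Bi-multiplicativity of $\operatorname{Sym}(\hat f)$ factors the resulting quadruple sum, so the inner part becomes a character sum on $G_1 \cap G_2$ governed by the symmetric bicharacter $B(\xi,\eta):=\operatorname{Sym}(\hat f)((\xi,-\xi),(\eta,-\eta))$, whose radical is by construction $\operatorname{Rad}_0$. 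A single application of character orthogonality in $\eta$ collapses this inner sum to a delta supported on those $\xi$ for which the character $\eta\mapsto\operatorname{Sym}(\hat f)(s(\mu)+(\xi,-\xi),(\eta,-\eta))$ on $G_1\cap G_2$ is trivial, which is exactly the condition $s(\mu)+(\xi,-\xi)\in\operatorname{Rad}$.

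Assume first $\operatorname{Rad}_0 = 0$. Then $B$ is non-degenerate, so for each $\mu$ the required shift $\xi(\mu)$ is unique; the resulting map $\tilde s(\mu):=s(\mu)+(\xi(\mu),-\xi(\mu))$ is a set-theoretic section of $p$ with image in $\operatorname{Rad}$. Since $p|_{\operatorname{Rad}}\colon\operatorname{Rad}\to G$ has kernel $\operatorname{Rad}_0=0$ and $|\operatorname{Rad}|=|G|$ by the split exact sequence in the preceding lemma, $p|_{\operatorname{Rad}}$ is a group isomorphism, so its inverse $\tilde s$ is itself a homomorphism. Substituting $\tilde s$ for $s$ on both entries (the $(\xi,-\xi)$-corrections pair trivially against any target in $\operatorname{Rad}$) yields
\[
m_{\mu,\nu} \;=\; \frac{|G_1\cap G_2|}{d^2|\Lambda_R/\Lambda'|^2}\,\operatorname{Sym}(\hat f)(\tilde s(\mu),\tilde s(\nu)).
\]
In this case $\operatorname{Rad}_0^\perp=G$, so $\alpha=\mathrm{id}$ and $\beta=\tilde s$ in the definition of $\operatorname{Sym}_G(\hat f)$, identifying the right-hand side with the claimed expression; invertibility then follows from the preceding corollary via $\operatorname{Rad}(\operatorname{Sym}_G(\hat f))\cong \operatorname{Rad}_0=0$.

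Conversely, if $\operatorname{Rad}_0\neq 0$ the image of $\xi\mapsto B(\xi,\cdot)$ in $\widehat{G_1\cap G_2}$ is a proper subgroup of index $|\operatorname{Rad}_0|$, and the $s$-independent coset assigned to $\mu$ is trivial precisely when $\mu\in\operatorname{Rad}_0^\perp$. Since $|\operatorname{Rad}_0^\perp|=|G|/|\operatorname{Rad}_0|<|G|$, one can pick $\mu\notin\operatorname{Rad}_0^\perp$; for such a $\mu$ the inner character sum vanishes identically in $\nu$, producing an entire zero row of $m$ and hence non-invertibility. The main subtleties I anticipate are tracking the section-dependence so as to recognise the genuinely $s$-invariant quantity valued in $\operatorname{Rad}$, and verifying that the refined section $\tilde s$ actually promotes from a set map to a group homomorphism, so that the final expression is an honest bilinear form on $G$ rather than merely a function.
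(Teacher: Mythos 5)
Your proposal is correct and follows essentially the same route as the paper's proof: rewrite $m_{\mu,\nu}$ as a sum of $\operatorname{Sym}\big(\hat{f}\big)$ over the fibers of $G_1\times G_2\to G_1+G_2$, apply character orthogonality over the fiber $(G_1\times G_2)_0\cong G_1\cap G_2$, and recognize the surviving representatives as a lift into $\operatorname{Rad}$, which identifies the result with $\operatorname{Sym}_G\big(\hat{f}\big)$ up to the constant $|G_1\cap G_2|$ and delta factors supported on $\operatorname{Rad}_0^\perp$. Your global homomorphic section $\tilde{s}=(p|_{\operatorname{Rad}})^{-1}$ is merely a cleaner bookkeeping of what the paper does with ad hoc fiber representatives and the symmetry of $m$; the substance is identical.
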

We f\/irst note that $\operatorname{Rad}_0=0$ implies $\operatorname{Rad}_0^\perp = G$ and thus $G = G_1 + G_2$. Together with Corollary~\ref{cor:LambdaPrime} this implies
\begin{Corollary}
	\begin{gather*}
		\Lambda'=\operatorname{Cent}_{\Lambda_R}(\Lambda).
	\end{gather*}
\end{Corollary}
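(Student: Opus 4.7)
The plan is to combine the observation already recorded in the text (that $\operatorname{Rad}_0 = 0$ forces $G = G_1 + G_2$) with the earlier Corollary~\ref{cor:LambdaPrime}, by way of the elementary lattice identity ``centralizer of a sum equals intersection of centralizers''. So this corollary is really a two-line deduction and I will lay out those two lines.

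First I would lift the group equality $G = G_1 + G_2$ to the lattice equality $\Lambda = \Lambda_1 + \Lambda_2$. Since $G = \Lambda/\Lambda'$ and $G_i = \Lambda_i/\Lambda'$, and since $\Lambda' \subseteq \Lambda_R \subseteq \Lambda_i$ (the first inclusion holds because $\Lambda' \subseteq \operatorname{Cent}_{\Lambda_R}(\Lambda_R)$ is part of the definition of $u_q(\g,\Lambda,\Lambda')$, and the second because $\Lambda_R \subseteq \Lambda_i$ by hypothesis), the preimage of $G_1 + G_2$ in $\Lambda$ under the quotient map is precisely $\Lambda_1 + \Lambda_2$. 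Thus $G = G_1 + G_2$ yields $\Lambda = \Lambda_1 + \Lambda_2$.

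Second, I would apply $\operatorname{Cent}_{\Lambda_R}(-)$ to both sides. By bilinearity of $(\,\cdot\,,\,\cdot\,)$, an element $\nu \in \Lambda_R$ satisfies $(\nu, \mu_1 + \mu_2) \in \ell\mathbb{Z}$ for all $\mu_i \in \Lambda_i$ if and only if it satisfies the corresponding condition separately on $\Lambda_1$ and on $\Lambda_2$, so
\begin{equation*}
\operatorname{Cent}_{\Lambda_R}(\Lambda) \;=\; \operatorname{Cent}_{\Lambda_R}(\Lambda_1 + \Lambda_2) \;=\; \operatorname{Cent}_{\Lambda_R}(\Lambda_1) \cap \operatorname{Cent}_{\Lambda_R}(\Lambda_2).
\end{equation*}
By Corollary~\ref{cor:LambdaPrime}, each factor on the right equals $\Lambda'$, so their intersection is $\Lambda'$, giving the desired $\operatorname{Cent}_{\Lambda_R}(\Lambda) = \Lambda'$.

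There is no real obstacle, since both ingredients are already in place; the only small subtlety worth pointing out is the lift of the coset equality $G = G_1+G_2$ to the lattice equality $\Lambda = \Lambda_1+\Lambda_2$, which relies on $\Lambda' \subseteq \Lambda_1 \cap \Lambda_2$, and this is built into the very setup of the $G_i$.
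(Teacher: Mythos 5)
Your proposal is correct and follows essentially the same route as the paper, which simply notes that $\operatorname{Rad}_0=0$ forces $G=G_1+G_2$ and then invokes Corollary~\ref{cor:LambdaPrime}; you have merely made explicit the lift to $\Lambda=\Lambda_1+\Lambda_2$ and the identity $\operatorname{Cent}_{\Lambda_R}(\Lambda_1+\Lambda_2)=\operatorname{Cent}_{\Lambda_R}(\Lambda_1)\cap\operatorname{Cent}_{\Lambda_R}(\Lambda_2)$, both of which the paper leaves implicit. The extra care about $\Lambda'\subseteq\Lambda_1\cap\Lambda_2$ is a worthwhile clarification but not a new idea.
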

Before we proof the theorem, we f\/irst give a simple example:
\begin{Example} \label{ex:RadSymf}
Let $G_1=G_2=G$ (correspondingly $\Lambda_1=\Lambda_2=\Lambda$) and assume $\hat{f}$ is symmetric non-degenerate, then the radical measures $2$-torsion:
\begin{gather*}
\operatorname{Rad}\big(\operatorname{Sym}_G\big(\hat{f}\big)\big) \cong \operatorname{Rad}_0=\{\mu\in G\,|\, 2\mu=0\}.
\end{gather*}
Again, this is the only case appearing for cyclic fundamental groups. Hence in all cases except $\g=D_{2n}$ factorizability is equivalent to $|\Lambda/\Lambda'|$ being odd.
	\end{Example}

\begin{proof}[Proof of Theorem~\ref{thm:InvertyMonodromymat}] The f\/irst part of the theorem follows by applying Lemma~\ref{lm:NondegGroupPairing} to the matrix $m$ as given in the previous lemma. Now, assume that $m$ is invertible. We must have $G=G_1+G_2$, otherwise the matrix has zero-columns and rows, dif\/ferently formulated: the f\/ibers $(G_1 \times G_2)_{\mu}$ in the short exact sequence must be non-empty for all $\mu \in G$. If on the other hand, $\operatorname{Rad}_0=0$, then $\operatorname{Rad}_0^\perp = G$ and thus $G_1+G_2=G$ must also hold, thus we assume this from now on. By the short exact sequence the f\/iber $(G_1 \times G_2)_{0} \cong G_1 \cap G_2$, other f\/ibers are of the explicit form $\tilde{\mu}+(G_1 \times G_2)_{0}$ for some choice of representative $\tilde{\mu}$. Therefore,
\begin{gather*}
m_{\mu,\nu}= \frac{1}{d^2|\Lambda_R/\Lambda'|^2} \sum_{\substack{\tilde{\mu} \in (G_1 \times G_2)_{\mu}\\\tilde{\nu} \in (G_1 \times G_2)_{\nu}}} \operatorname{Sym}\big(\hat{f}\big)(\tilde{\mu},\tilde{\nu})\\
\hphantom{m_{\mu,\nu}}{}
= \frac{1}{d^2|\Lambda_R/\Lambda'|^2} \sum_{\tilde{\nu} \in (G_1 \times G_2)_{\nu}} \operatorname{Sym}\big(\hat{f}\big)(\tilde{\mu},\tilde{\nu}) \sum_{\tilde{\eta} \in (G_1 \times G_2)_{0}} \operatorname{Sym}\big(\hat{f}\big)(\tilde{\eta},\tilde{\nu}) \\
\hphantom{m_{\mu,\nu}}{} = \frac{|G_1 \cap G_2|}{d^2|\Lambda_R/\Lambda'|^2} \sum_{\tilde{\nu} \in (G_1 \times G_2)_{\nu}} \operatorname{Sym}\big(\hat{f}\big)(\tilde{\mu},\tilde{\nu}) \cdot \delta_{\operatorname{Sym}(f)(\tilde{\nu},\_)|_{G_1\cap G_2}=1}=(*).
	\end{gather*}
Fix as above a representative $\tilde{\nu}$ of the f\/iber of $\nu$, i.e., $\tilde{\nu} \in (G_1 \times G_2)_{\nu}$ such that $\operatorname{Sym}(f)(\tilde{\nu},\_)|_{G_1\cap G_2}$ $=1$ holds. Two elements fulf\/illing this property dif\/fer by an element in the subgroup $\operatorname{Rad}_0 \leq G_1 \cap G_2$, thus
\begin{gather*}
(*)=\frac{|G_1 \cap G_2|}{d^2|\Lambda_R/\Lambda'|^2} \operatorname{Sym}\big(\hat{f}\big)(\tilde{\mu},\tilde{\nu}) \sum_{\tilde{\xi} \in \operatorname{Rad}_0} \operatorname{Sym}\big(\hat{f}\big)(\tilde{\xi},\tilde{\nu}) \cdot \delta_{\operatorname{Sym}(f)(\tilde{\nu},\_)|_{G_1\cap G_2}=1} \\
\hphantom{(*)}{} = \frac{|G_1 \cap G_2||\operatorname{Rad}_0|}{d^2|\Lambda_R/\Lambda'|^2} \operatorname{Sym}\big(\hat{f}\big)(\tilde{\mu},\tilde{\nu}) \cdot \delta_{\operatorname{Sym}(\hat{f})(\tilde{\nu},\_)|_{G_1\cap G_2}=1}\, \delta_{\operatorname{Sym}(\hat{f})(\tilde{\mu},\_)|_{\operatorname{Rad}_0}=1}.
\end{gather*}
Since $m$ is symmetric, we have
\begin{gather*}
m_{\mu,\nu}=\frac{|G_1 \cap G_2||\operatorname{Rad}_0|}{d^2|\Lambda_R/\Lambda'|^2} \operatorname{Sym}\big(\hat{f}\big)(\tilde{\mu},\tilde{\nu}) \cdot \delta_{\operatorname{Sym}(\hat{f})(\tilde{\nu},\_)|_{G_1\cap G_2}=1} \delta_{\operatorname{Sym}(\hat{f})(\tilde{\mu},\_)|_{G_1\cap G_2}=1} \\
\hphantom{m_{\mu,\nu}}{} =\frac{|G_1 \cap G_2||\operatorname{Rad}_0|}{d^2|\Lambda_R/\Lambda'|^2} \operatorname{Sym}_G\big(\hat{f}\big)(\mu,\nu) \delta_{\operatorname{Rad}_\mu \neq \varnothing}\delta_{\operatorname{Rad}_\nu \neq \varnothing}
\end{gather*}
and this is invertible if an only if $\operatorname{Rad}_0 \cong \operatorname{Rad}\big(\operatorname{Sym}_G\big(\hat{f}\big)\big)=0$.
\end{proof}

\subsection[Factorizability for symmetric $R_0(f)$]{Factorizability for symmetric $\boldsymbol{R_0(f)}$}\label{section5.2}

For $R_0 = \sum_{\mu,\nu} f(\mu,\nu) K_\mu \otimes K_\nu$ being the Cartan part of an $R$-matrix, assume that $\hat{f}=|G|f$ on~$G$ is symmetric. We have shown in Example~\ref{ex:RadSymf} that factorizability is equivalent to~$|G|$ being odd.

We now want to give a necessary and suf\/f\/icient condition for this:

\begin{Lemma}\label{lm:conditionFor|G|odd}
	Let $\Lambda_R \subseteq \Lambda \subseteq \Lambda_W$ be an arbitrary intermediate lattice for a certain irreducible root system. Then the order of the group $G=\Lambda/\operatorname{Cent}_{\Lambda_R}(\Lambda)$ is odd if and only if both of the following conditions are satisfied:
	\begin{enumerate}\itemsep=0pt
		\item[$1)$] $|\Lambda/\Lambda_R|$ is odd,
		\item[$2)$] $\ell$ is either odd or $(\ell \equiv 2$ {\rm mod} $4$, $\g=B_n$, $\Lambda=\Lambda_R)$ including $A_1$.
	\end{enumerate}
\end{Lemma}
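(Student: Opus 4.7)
The plan is to exploit the short exact sequence
\begin{gather*}
0 \to \Lambda_R/\Lambda' \to \Lambda/\Lambda' \to \Lambda/\Lambda_R \to 0
\end{gather*}
with $\Lambda'=\operatorname{Cent}_{\Lambda_R}(\Lambda)$, which gives $|G|=|\Lambda_R/\Lambda'|\cdot|\Lambda/\Lambda_R|$. Thus $|G|$ is odd if and only if each factor is odd. The parity of the second factor is exactly condition~(1), so the task reduces to showing that $|\Lambda_R/\Lambda'|$ is odd if and only if condition~(2) holds.

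For this, I would invoke Lemma~\ref{lm:ExplicitFormOfCentralizers}, which yields the explicit formula
\begin{gather*}
|\Lambda_R/\Lambda'| = |\det A_{\mathrm{Cent}}| = \prod_{i=1}^n \frac{\ell}{\gcd(\ell,d_i^\Lambda)}.
\end{gather*}
If $\ell$ is odd, each factor is odd and we are done. If $\ell=2^k m$ with $m$ odd and $k\geq 1$, the product is odd precisely when $2^k\mid d_i^\Lambda$ for every invariant factor $d_i^\Lambda$ of $A_\Lambda$.

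The remaining step is a finite case analysis, which I expect to be the main obstacle. First I would treat $\Lambda=\Lambda_R$: here $A_\Lambda$ is the Gram matrix $((\alpha_i,\alpha_j))$, and the smallest invariant factor is the gcd of its entries. Inspecting all simple types one finds that this gcd equals $2$ only for $A_1$ and $B_n$ ($n\geq 2$), and equals $1$ in all other cases. For $B_n$ the key structural observation I would use is that $\det(A_R)=|\Lambda_W^\vee/\Lambda_R|=(\prod_i d_i)\cdot |\pi_1|=2^{n-1}\cdot 2=2^n$, so that $A_R/2$ is an integer matrix of determinant~$1$, hence unimodular. This forces the Smith normal form of $A_R$ to be $2\,I_n$, i.e.\ every invariant factor equals~$2$. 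Consequently, for $B_n$ with $\Lambda=\Lambda_R$, $|\Lambda_R/\Lambda'|$ is odd iff $2^k\mid 2$ iff $k=1$, i.e.\ $\ell\equiv 2\pmod 4$; all other simple types with $\Lambda=\Lambda_R$ have $d_1^\Lambda=1$ and so require $\ell$ odd.

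Finally I would rule out the remaining case $\Lambda_R\subsetneq\Lambda$: condition~(1) forces $|\Lambda/\Lambda_R|$ to be odd, and since $\pi_1$ has odd-order subgroups only for $\g=A_n$ (with $n+1$ admitting an odd divisor) and $\g=E_6$, both of which are simply-laced, one has $A_\Lambda=P_\Lambda S_\Lambda P_\Lambda^T$ with $d_1^\Lambda=\cdots=d_{n-1}^\Lambda=1$ (as computed in Example~\ref{ex:AellForAn} for $A_n$, and trivially for simply-laced $\Lambda=\Lambda_W$ since all $d_i=1$). Hence for any even $\ell$ the product $\prod_i \ell/\gcd(\ell,d_i^\Lambda)$ contains at least one full factor of $\ell$ and so is even. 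Combining these cases yields exactly the list in condition~(2), and the lemma follows.
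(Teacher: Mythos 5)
Your proof is correct, and it rests on the same key identity as the paper's: both reduce $|G|$ to the product $\prod_{i}\frac{\ell}{\gcd(\ell,d_i^\Lambda)}\cdot|\Lambda/\Lambda_R|$ via Lemma~\ref{lm:ExplicitFormOfCentralizers} (your factor $|\Lambda_R/\Lambda'|=\det A_{\rm Cent}$ equals the paper's $|\Lambda/\operatorname{Cent}_\Lambda(\Lambda_R)|=\det D_\ell$, so the two decompositions of $|G|$ coincide). Where you genuinely diverge is the combinatorial endgame. The paper compares the $2$-adic valuation $m_\ell$ of $\det D_\ell$ against the valuation $m_{\pi_1}$ of $|\pi_1|$ and runs through simply-laced versus doubly-laced types; for the crucial fact that $B_n$ with $\Lambda=\Lambda_R$ works at $\ell\equiv 2 \bmod 4$ it simply asserts that the Smith normal form of $A_R$ is $2\cdot\operatorname{id}$. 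You instead observe that oddness of the product is equivalent to $2^{v_2(\ell)}$ dividing the smallest invariant factor $d_1^\Lambda$, which for $\Lambda=\Lambda_R$ is the gcd of the Gram matrix entries, and you supply a clean proof of the $B_n$ assertion ($A_R/2$ is an integer matrix of determinant $\det(A_R)/2^n=1$, hence unimodular, forcing $S_R=2I_n$) that the paper leaves unjustified. Your handling of $\Lambda\supsetneq\Lambda_R$ (nontrivial odd-order subgroups of $\pi_1$ occur only for $A_n$ and $E_6$, both of which have $d_1^\Lambda=1$) is also more direct than the paper's valuation bookkeeping. The net effect is a more elementary and, in the $B_n$ step, more complete argument for the same statement.
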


\begin{proof}We saw that in all our cases, there exists an isomorphism
\begin{gather*}
\Lambda/\Lambda_R \cong \operatorname{Cent}_{\Lambda}(\Lambda_R)/\operatorname{Cent}_{\Lambda_R}(\Lambda).
\end{gather*}
Moreover, from Lemma~\ref{lm:ExplicitFormOfCentralizers} we know that $|\Lambda/\operatorname{Cent}_{\Lambda}(\Lambda_R)|=\det (D_\ell)$, where $D_\ell$ was the diagonal matrix $\operatorname{Diag}\big( \frac{\ell}{\operatorname{gcd}(\ell,d_i^\Lambda)}\big) )$ with $d_i^\Lambda$ being the invariant factors of the lattice $\Lambda$ (i.e., the diagonal entries of the Smith normal form of a basis matrix of $\Lambda$). Thus,
\begin{gather*}|G|= |\Lambda/\operatorname{Cent}_{\Lambda_R}(\Lambda)| = |\Lambda/\operatorname{Cent}_{\Lambda}(\Lambda_R)||\operatorname{Cent}_{\Lambda}(\Lambda_R)/\operatorname{Cent}_{\Lambda_R}(\Lambda)| \\
\hphantom{|G|}{} =	|\Lambda/\operatorname{Cent}_{\Lambda}(\Lambda_R)||\Lambda/\Lambda_R| = \det (D_\ell) |\Lambda/\Lambda_R|
= \prod_{i=1}^n \frac{\ell}{\operatorname{gcd}(\ell,d_i^\Lambda)} |\Lambda/\Lambda_R|.
\end{gather*}
Clearly, this term is odd if $\ell$ and $|\Lambda/\Lambda_R|$ are odd. In the case ($\ell \equiv 2$ mod $4$, $\g=B_n$, $\Lambda=\Lambda_R$), the Smith normal form $S_R$ of the basis matrix $A_R$ is given by $2\cdot \operatorname{id}$. Thus, $|G|$ is odd in this case. On the other hand, let $|G|$ be odd:

We f\/irst consider the case $\ell$ \textit{even}. A necessary condition for $|\Lambda/\Lambda'|$ odd is that the multipli\-ci\-ty~$m_\ell$ of the prime~$2$ in $\prod\limits_{i=1}^n \frac{\ell}{\operatorname{gcd}(\ell,d_i^\Lambda)}$ is at most the multiplicity $m_{\pi_1}$ of the prime~$2$ in~$|\pi_1|$. We check this condition for rank $n>1$:
\begin{itemize}\itemsep=0pt
\item For $\g$ simply-laced (or triply-laced $\g=G_2$) we have all $d_i=1$, hence $n\,|\, m_\ell$ (equality for $\ell=2$ ${\rm mod}~4$). The cases $D_n$ with $m_{\pi_1}=2$ have rank $n\geq 4$, all others except $A_n$ have $m_{\pi_1}=0,1$, so the necessary condition $m_\ell\leq m_{\pi_1}$ is never fulf\/illed. The cases $A_n$ have $2^{m_{\pi_1}}|(n+1)\leq (m_\ell+1)\stackrel{!}{\leq} (m_{\pi_1}+1)$ which can only be true in rank $n=1$ treated above.
\item For $\g$ doubly-laced of rank $n>1$, we always have always $m_{\pi_1}=0,1$ but $m_\ell$ can be considerably smaller than above, namely for $\ell=2$ ${\rm mod}~4$ equal to the number of short simple roots $d_{\alpha_i}=1$ (otherwise	$m_\ell$ again increases by $n$ for every factor $2$ in~$\ell$), hence the necessary condition $m_\ell\leq m_{\pi_1}$ can be fulf\/illed only for $B_n$ (which would also include $A_1$ above for $n=1$). More precisely, since $m_\ell=m_{\pi_1}$ and the decomposition for $\Lambda/\Lambda'$ has an additional	factor $|\Lambda/\Lambda_R|$, it can only be odd for $\Lambda=\Lambda_R$.
	\end{itemize}
On the other hand, if $\ell$ is \textit{odd}, then the whole product term is odd. But since $|G|$ was assumed to be odd, also $|\Lambda/\Lambda'|$ must be odd.
\end{proof}

\begin{Corollary}\label{cor:FactForLambda=LambdaR}
	Let $\Lambda=\Lambda_R$. In the previous section we have seen that $\hat{f}=q^{-(\cdot,\cdot)}$ gives always an $R$-matrix in this case. By the proof of the previous lemma, we have
	\begin{gather*}
		\operatorname{Rad}_0 \cong \prod_{i=1}^n \Z_{\gcd\big(2,\frac{\ell}{\gcd(\ell,d_i^R)} \big) },
	\end{gather*}
	where the $d_i^R$ denote the invariant factors of $\Lambda_W^\vee/\Lambda_R$.
\end{Corollary}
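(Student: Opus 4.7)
The plan is to reduce the statement to Example~\ref{ex:RadSymf} and then to an elementary divisor computation for the finite abelian group $G=\Lambda_R/\Lambda'$.

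First I note that $\Lambda=\Lambda_R$ forces $\Lambda_1=\Lambda_2=\Lambda_R$, so $G_1=G_2=G$ and $\hat f(\mu,\nu)=q^{-(\mu,\nu)}$ is manifestly symmetric. By Corollary~\ref{cor:LambdaPrime} we have $\Lambda'=\operatorname{Cent}_{\Lambda_R}(\Lambda_R)$, and by Example~\ref{ex:RadSymf} the radical $\operatorname{Rad}_0$ coincides in this symmetric case with the $2$-torsion subgroup $\{\mu\in G\mid 2\mu=0\}$. Thus it suffices to identify $G$ as an abstract abelian group and then read off its $2$-torsion.

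Next I apply Lemma~\ref{lm:ExplicitFormOfCentralizers} with $\Lambda=\Lambda_R$, which yields
\begin{gather*}
\operatorname{Cent}_{\Lambda_R}(\Lambda_R)=A_R\bigl(P_R^T\bigr)^{-1}D_\ell\,\Lambda_W^\vee,\qquad D_\ell=\operatorname{Diag}\!\Bigl(\tfrac{\ell}{\gcd(\ell,d_i^R)}\Bigr).
\end{gather*}
Writing $\Lambda_R=A_R\Lambda_W^\vee$ and factoring out the isomorphism $A_R(P_R^T)^{-1}$, the quotient becomes
\begin{gather*}
G\cong \Lambda_W^\vee\big/ D_\ell \Lambda_W^\vee\cong\prod_{i=1}^n \Z_{\ell/\gcd(\ell,d_i^R)}.
\end{gather*}
Here I use that $D_\ell$ is already diagonal, so the Smith normal form is trivial to read off; the $d_i^R$ are by definition the invariant factors of $A_R$, i.e., of $\Lambda_W^\vee/\Lambda_R$.

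Finally, the $2$-torsion of a finite cyclic group $\Z_m$ is $\Z_{\gcd(2,m)}$, and $2$-torsion commutes with finite direct products. Applied to the decomposition above this gives
\begin{gather*}
\operatorname{Rad}_0\cong\prod_{i=1}^n \Z_{\gcd\!\bigl(2,\,\ell/\gcd(\ell,d_i^R)\bigr)},
\end{gather*}
as claimed. The only step requiring any care is the bookkeeping of base changes in the second paragraph, ensuring that the unimodular factors $P_R,Q_R$ can be absorbed so that the quotient is really computed by $D_\ell$ alone; this is guaranteed by the invertibility of $A_R(P_R^T)^{-1}$ over $\Z$, so no serious obstacle appears.
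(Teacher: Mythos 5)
Your proposal is correct and follows essentially the same route the paper intends: Example~\ref{ex:RadSymf} identifies $\operatorname{Rad}_0$ with the $2$-torsion of $G=\Lambda_R/\operatorname{Cent}_{\Lambda_R}(\Lambda_R)$, and the elementary-divisor decomposition $G\cong\prod_i\Z_{\ell/\gcd(\ell,d_i^R)}$ via $D_\ell$ is exactly what the proof of Lemma~\ref{lm:conditionFor|G|odd} (resting on Lemma~\ref{lm:ExplicitFormOfCentralizers}) supplies. Your explicit bookkeeping of the unimodular factors is a welcome addition, since the paper leaves that implicit.
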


\subsection[Factorizability for $D_{2n}$, $R_0$ antisymmetric]{Factorizability for $\boldsymbol{D_{2n}}$, $\boldsymbol{R_0}$ antisymmetric}\label{section5.3}
The split case $\g=D_{2n}$, $G=G_1 \times G_2$ is clearly factorizable, so the only remaining case for which we have to check factorizabilty is $\g=D_{2n}$, $\Lambda=\Lambda_W$ for $\hat{f}$ being not symmetric. We know that in this case, the corresponding form $g$ on $\Lambda/\Lambda_R$ is uniquely def\/ined by a $2\times 2$-matrix $K \in \mathfrak{gl}(2,\F_2)$, s.t.\ $g(\lambda_{2(n-1)+i},\lambda_{2(n-1)+j})=\exp \big( \frac{2 \pi i K_{ij}}{2} \big)$ for $i,j\in \{1,2\}$. From this we see that if $g$ is not symmetric, it must be antisymmetric, i.e., $g(\mu,\nu)=g(\nu,\mu)^{-1}$. Thus, the following lemma applies in this case, and hence there are no factorizable $R$-matrices for $D_{2n}$, $\Lambda=\Lambda_W$.
\begin{Lemma}\label{lm:FactForAsymmPairing}For $\g$ simply-laced and $\Lambda=\Lambda_W$, let $\hat{f}=q^{-(\cdot,\cdot)}g\colon G \times G \to \C^\times $ be a non-degenerate form as in Theorem~{\rm \ref{thm:solutionsgrpeq}} and Lemma~{\rm \ref{lm:NondegGroupPairing}}, s.t.\ the form $g\colon\pi_1 \times \pi_1 \to \C^\times$ is asymmetric. Then,
	\begin{gather*}		
		\operatorname{Rad}_0 \cong \bigoplus_{i=1}^{n} \Z_{\gcd(2,\ell d_i^R)},
	\end{gather*}
	where the $d_i^R$ denote the invariant factors of $\pi_1$. In particular, $\operatorname{Rad}_0=0$ holds if and only if $\gcd(2, \ell|\pi_1|)=1$.
\end{Lemma}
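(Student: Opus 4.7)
The strategy is to compute the radical $\operatorname{Rad}_0$ inside $(G_1\times G_2)_0\cong G$ directly from its definition and reduce it to a $2$-torsion computation in a quotient lattice.

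First I would set up the geometry. Since $\Lambda=\Lambda_W=\Lambda_1=\Lambda_2$ and $\g$ is simply-laced, $\Lambda_W=\Lambda_W^\vee$ and $\hat{\Lambda}_W=\Lambda_R$. By Corollary~\ref{cor:LambdaPrime} we have $\Lambda'=\operatorname{Cent}_{\Lambda_R}(\Lambda_W)=\Lambda_R\cap \ell\hat{\Lambda}_W=\ell\Lambda_R$, so $G=\Lambda_W/\ell\Lambda_R$ and $(G_1\times G_2)_0=\{(\nu,-\nu)\mid\nu\in G\}$.

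Next I would translate the radical condition. For $\mu\in\Lambda_W$ the pair $(\bar\mu,-\bar\mu)$ lies in $\operatorname{Rad}_0$ iff $\operatorname{Sym}(\hat f)((\mu,-\mu),(\nu,-\nu))=1$ for every $\nu\in\Lambda_W$. Unpacking using $\hat f(\mu,\nu)=q^{-(\mu,\nu)}g(\bar\mu,\bar\nu)$ and the bicharacter property, this becomes
\begin{gather*}
\hat f(\mu,\nu)\,\hat f(\nu,\mu) \;=\; q^{-2(\mu,\nu)}\,g(\bar\mu,\bar\nu)\,g(\bar\nu,\bar\mu)\;=\;1.
\end{gather*}
The antisymmetry of $g$ kills the $g$-contribution, so the condition collapses to $q^{-2(\mu,\nu)}=1$ for all $\nu\in\Lambda_W$, i.e., $2(\mu,\nu)\in\ell\Z$ for all $\nu\in\Lambda_W$. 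Using $\hat{\Lambda}_W=\Lambda_R$ this is precisely $2\mu\in\ell\Lambda_R$, which says $\bar\mu$ is a $2$-torsion element of $G=\Lambda_W/\ell\Lambda_R$. Thus $\operatorname{Rad}_0$ equals the $2$-torsion subgroup of $\Lambda_W/\ell\Lambda_R$.

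Finally I would identify this $2$-torsion. The inclusion $\Lambda_R\hookrightarrow\Lambda_W$, written in the bases $(\alpha_i)$ and $(\lambda_j)$, is given by the (symmetric) Cartan matrix $C$, so $\Lambda_W/\ell\Lambda_R\cong\operatorname{coker}(\ell C)$. The Smith normal form of $\ell C$ is $\ell\cdot S_C$; since the invariant factors of $C$ are precisely those of $\operatorname{coker}(C)=\pi_1$, we get
\begin{gather*}
\Lambda_W/\ell\Lambda_R\;\cong\;\bigoplus_{i=1}^n \Z_{\ell\,d_i^R},
\end{gather*}
where the $d_i^R$ are the invariant factors of $\pi_1=\Lambda_W^\vee/\Lambda_R$. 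The $2$-torsion of a cyclic group $\Z_m$ is $\Z_{\gcd(2,m)}$, whence
\begin{gather*}
\operatorname{Rad}_0\;\cong\;\bigoplus_{i=1}^{n}\Z_{\gcd(2,\,\ell d_i^R)}.
\end{gather*}
This vanishes iff every $\ell d_i^R$ is odd, iff $\ell$ is odd and all $d_i^R$ are odd, iff $\gcd(2,\ell|\pi_1|)=1$, proving the addendum.

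The only subtle step is the passage from $2\mu\in\ell\Lambda_R$ to a $2$-torsion statement in the finite abelian group $\Lambda_W/\ell\Lambda_R$ and the correct identification of its invariant factors via the Cartan matrix; everything else is bookkeeping. No case distinction between $D_{2n}$ and other simply-laced types is needed in the argument itself, although in practice antisymmetric $g$ on $\pi_1$ only occurs for $D_{2n}$ since cyclic groups admit no nontrivial alternating bicharacters.
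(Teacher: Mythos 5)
Your proposal is correct and follows essentially the same route as the paper: unwind the definition of $\operatorname{Rad}_0$ on the antidiagonal, use antisymmetry of $g$ to cancel the $g$-contributions so that the condition reduces to $q^{-2(\mu,\nu)}=1$, identify this with the $2$-torsion of $\Lambda_W/\ell\Lambda_R$, and read off the invariant factors via the Smith normal form. The only cosmetic difference is that the paper computes the quotient as $(2\Lambda_W\cap\ell A_R\Lambda_W)/2\ell A_R\Lambda_W$ using the decomposition $A_R=P_RS_RP_R^T$, while you pass directly to $\operatorname{coker}(\ell C)$; for simply-laced $\g$ these are the same computation.
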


\begin{proof}We recall the def\/inition of $\operatorname{Rad}_0\big(\operatorname{Sym}_G\big(\hat{f}\big)\big) $ in this case:
\begin{gather*}
\operatorname{Rad}_0(\operatorname{Sym}_G(\hat{f})) = \big\{ \mu \in G \,|\, f(\nu,\mu)^{-1}=f(\mu,\nu) \ \forall \, \nu \in G \big\} \\
\hphantom{\operatorname{Rad}_0(\operatorname{Sym}_G(\hat{f}))}{}
= \big\{ \mu \in G \,|\, q^{(\nu,\mu)}g(\nu,\mu)^{-1}=q^{-(\mu,\nu)}g(\mu,\nu) \ \forall \, \nu \in G \big\} \\
\hphantom{\operatorname{Rad}_0(\operatorname{Sym}_G(\hat{f}))}{}
= \big\{ \mu \in G \,|\, q^{(\nu,\mu)}=q^{-(\mu,\nu)} \ \forall \, \nu \in G \big\} \\
\hphantom{\operatorname{Rad}_0(\operatorname{Sym}_G(\hat{f}))}{}
= \big\{ \mu \in G \,|\, q^{(2\mu,\nu)}=1 \ \forall \, \nu \in G \big\} \\
\hphantom{\operatorname{Rad}_0(\operatorname{Sym}_G(\hat{f}))}{}
= \big\{ \mu \in G \,|\, 2\mu \in \operatorname{Cent}_{2\Lambda_W}(\Lambda_W)/2\operatorname{Cent}_{\Lambda_R}(\Lambda_W) \big\} = (\ast) .
	\end{gather*}
For $\g$ is simply-laced, we have $\Lambda_W = \Lambda_W^\vee$, thus
\begin{gather*}
(\ast)\cong \operatorname{Cent}_{2\Lambda_W}(\Lambda_W)/2\operatorname{Cent}_{\Lambda_R}(\Lambda_W)
= (2 \Lambda_W \cap \ell A_R \Lambda_W)/2\ell A_R \Lambda_W \\
\hphantom{(\ast)}{} = P_R \operatorname{Diag}\big(\operatorname{lcm}\big(2,\ell d_i^R\big)\big)\Lambda_W / P_R 2 \ell S_R \Lambda_W
= \Lambda_W / \operatorname{Diag}\big(\gcd\big(2,\ell d_i^R\big)\big) \Lambda_W.
	\end{gather*}
	This proves the claim.
\end{proof}
\subsection{Transparent objects in non-factorizable cases}\label{section5.4}
In this section, we determine the transparent objects in the representation category of $u_q(\g,\Lambda)$ with our $R$-matrix given by $R_0\bar{\Theta}$ and $R_0=\frac{1}{|\Lambda/\Lambda'|}\sum\limits_{\mu,\nu\in\Lambda/\Lambda'} \hat{f}$ with $\hat{f}$ a group pairing $\Lambda_1/\Lambda'\times \Lambda_2/\Lambda'\to \C^\times$.
\begin{Definition}
	Let $\mathcal{C}$ be a braided monoidal category with braiding $c$. An object $V \in \mathcal{C}$ is called \emph{transparent} if the double braiding $c_{W,V}\circ c_{V,W}$ is the identity on $V \otimes W$ for all $W \in \mathcal{C}$.
\end{Definition}

The following theorem by Shimizu gives a very important characterization of factorizable categories:

\begin{Theorem}[{\cite[Theorem 1.1]{Shi16}}]\label{thm_Shimizu}
A braided finite tensor category is factorizable if and only if the transparent objects are direct sums of finitely many copies of the unit object.
\end{Theorem}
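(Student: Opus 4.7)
The plan is to derive both directions using the coend $L = \int^{X \in \mathcal{C}} X^\vee \otimes X$, which in a finite tensor category exists and carries a Hopf algebra structure internal to $\mathcal{C}$ (Lyubashenko, Majid). The braiding canonically equips $L$ with a Hopf pairing $\omega_L \colon L \otimes L \to \mathbf{1}$ whose components through the universal dinatural inclusions $\iota_V \colon V^\vee \otimes V \to L$ encode the double braiding $c_{W,V} \circ c_{V,W}$. The key reduction is to show that both factorizability of $\mathcal{C}$ and triviality of the M\"uger center $\mathcal{Z}_2(\mathcal{C})$ (the full subcategory of transparent objects) are each equivalent to non-degeneracy of $\omega_L$.

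First, to prove factorizability $\Leftrightarrow$ $\omega_L$ non-degenerate, I would use Shimizu's identification of $\mathcal{Z}(\mathcal{C})$ with modules over a suitable central algebra built from $L$, and of $\mathcal{C} \boxtimes \mathcal{C}^{\text{op}}$ with comodules over $L$. Under these identifications the canonical braided tensor functor $G$ becomes, up to equivalence, the comparison functor induced by $\omega_L$, so $G$ is an equivalence iff $\omega_L$ induces an isomorphism $L \stackrel{\sim}{\to} L^\vee$. This step leans on Shimizu's adjoint-lifting machinery and a categorical Fundamental Theorem of Hopf Modules, and crucially requires no semisimplicity.

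Second, for transparency $\Leftrightarrow$ $\omega_L$ non-degenerate, I would use that $\omega_L \circ (\iota_V \otimes \iota_W)$ is, by construction of $\omega_L$, a coend representation of the double braiding on $V \otimes W$. Therefore an object $V$ is transparent iff $\iota_V$ factors through the radical of $\omega_L$, and the full subcategory of transparent objects is controlled by this radical. Hence $\mathcal{Z}_2(\mathcal{C})$ consists only of direct sums of $\mathbf{1}$ precisely when the radical of $\omega_L$ is zero, i.e.\ when $\omega_L$ is non-degenerate.

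The main obstacle is the non-semisimple finite setting: one cannot argue via a simples decomposition, and the ``radical'' of $\omega_L$ must be interpreted categorically as a subobject of $L$ rather than as a subset. Verifying that $\omega_L$ is genuinely a Hopf pairing, that Shimizu's functorial adjoints behave compatibly under the module/comodule identifications, and that this categorical radical faithfully detects the M\"uger center are the technical hurdles. Once these pieces are in place, combining the two equivalences yields the stated characterization.
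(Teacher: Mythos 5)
First, a point of order: the paper does not prove this statement at all --- Theorem~\ref{thm_Shimizu} is quoted verbatim from Shimizu's work \cite{Shi16} and used as a black box, so there is no internal proof to compare your attempt against. Judged as a reconstruction of Shimizu's actual argument, your outline is on the right track in broad strokes: the proof does run through the coend $L=\int^{X}X^\vee\otimes X$, its canonical Hopf pairing $\omega_L$, and the equivalence of several non-degeneracy conditions (non-degeneracy of $\omega_L$, factorizability, triviality of the M\"uger center).

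However, your second reduction contains a genuine gap. Transparency of $V$ is \emph{not} the condition that $\iota_V$ factors through the radical of $\omega_L$. By construction $\omega_L\circ(\iota_V\otimes\iota_W)$ encodes the double braiding, so ``$V$ transparent'' translates into $\omega_L\circ(\iota_V\otimes \mathrm{id}_L)=(\varepsilon_L\circ\iota_V)\otimes\varepsilon_L$, i.e., the morphism $\iota_V-\eta_L\circ\mathrm{ev}_V$ (not $\iota_V$ itself) is annihilated by the map $L\to L^\vee$ induced by $\omega_L$; an object with $\iota_V$ itself in the radical would have double braiding composing to zero under evaluation, not to the identity. More seriously, even granting non-degeneracy, the resulting identity $\iota_V=\eta_L\circ\mathrm{ev}_V$ does not obviously imply $V\cong\mathbf{1}^{\oplus n}$: in a semisimple category one reads this off from $L\cong\bigoplus_i X_i^\vee\otimes X_i$ over the simples, but in the non-semisimple finite setting this implication is precisely the crux of Shimizu's proof and requires his analysis of the M\"uger center via the end $\int_X X^\vee\otimes X$ and an associated left-exact functor, together with an argument in the converse direction that a non-zero radical of $\omega_L$ actually produces a non-trivial transparent \emph{object} rather than merely a non-zero subobject of $L$. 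Your sketch names these as ``technical hurdles'' but supplies no mechanism for either implication, so as it stands the equivalence between triviality of the M\"uger center and non-degeneracy of $\omega_L$ is asserted rather than proved.
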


\begin{Corollary}In particular, for a Hopf algebra $H$ the representation category $H-\mathsf{mod}_{fd}$ is factorizable if and only if the transparent objects are multiples of the trivial representation and vice versa.
\end{Corollary}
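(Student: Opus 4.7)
The plan is to deduce this Corollary as a direct application of Shimizu's general characterization (Theorem~\ref{thm_Shimizu}) once we import it into the Hopf algebra setting. First, I would note that for any finite-dimensional Hopf algebra $H$, the category $H\text{-}\mathsf{mod}_{fd}$ is a finite tensor category in the sense required by Shimizu: it is $\C$-linear, rigid via the antipode, has finitely many simples and enough projectives, and its monoidal unit is the trivial one-dimensional representation $\C$ on which $H$ acts through the counit $\varepsilon$. When $(H,R)$ is quasitriangular, $R$ equips this category with a braiding $c_{V,W}(v\otimes w)=\tau(R.(v\otimes w))$, so it becomes a braided finite tensor category.

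Next, I would invoke Theorem~\ref{thm:FactorizabilityOfCats}, which identifies factorizability of the Hopf algebra $(H,R)$ (in the sense of Definition~\ref{def:FactorizableHopfAlgs}, i.e.\ non-degeneracy of the monodromy matrix $M=R_{21}R$) with factorizability of the braided category $H\text{-}\mathsf{mod}_{fd}$ (in the sense of Definition~\ref{def:FactorizabilityOfCats}, i.e.\ the canonical functor $\mathcal{C}\boxtimes\mathcal{C}^{\mathrm{op}}\to\mathcal{Z}(\mathcal{C})$ being an equivalence). This is the bridge that lets the word ``factorizable'' in the Corollary refer, interchangeably, to either the Hopf-algebraic or categorical notion.

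Then I would apply Shimizu's Theorem~\ref{thm_Shimizu} to $\mathcal{C}=H\text{-}\mathsf{mod}_{fd}$: this category is factorizable precisely when every transparent object is a finite direct sum of copies of the unit object. Since the unit object here is exactly the trivial representation $\C_\varepsilon$, ``direct sums of finitely many copies of the unit'' translates to ``multiples of the trivial representation''. Combining this equivalence with the previous bridge yields both directions of the stated iff.

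There is essentially no hard step: the Corollary is a packaging of Theorem~\ref{thm:FactorizabilityOfCats} and Theorem~\ref{thm_Shimizu} in Hopf-algebraic language. The only potential subtlety worth a sentence of justification is verifying that $H\text{-}\mathsf{mod}_{fd}$ satisfies the finite tensor category hypotheses needed to apply Shimizu's result, but for finite-dimensional $H$ this is standard; for our application to $H=u_q(\g,\Lambda,\Lambda')$ it is immediate since $H$ is finite-dimensional. The ``vice versa'' is just the contrapositive observation that the existence of a non-trivial transparent object obstructs factorizability, which is already contained in the iff.
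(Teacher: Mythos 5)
Your proposal is correct and matches the paper's (implicit) argument exactly: the Corollary is stated without proof as an immediate combination of Theorem~\ref{thm:FactorizabilityOfCats} (identifying Hopf-algebraic and categorical factorizability) with Shimizu's Theorem~\ref{thm_Shimizu}, noting that the unit object of $H\text{-}\mathsf{mod}_{fd}$ is the trivial representation. Nothing further is needed.
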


Since in our cases $\Lambda_1\neq \Lambda_2$ can only appear in $D_{2n}$, and we know those are factorizable, we shall in the following restrict ourselves to the case $\Lambda_1=\Lambda_2=\Lambda$. The proof below works also in the more general case, but requires more notation. As usual we f\/irst reduce the Hopf algebra question to the group ring and then solve the group theoretical problem.

\begin{Lemma}If a $u_q(\g)$-module $V$, with a highest-weight vector $v$ and $K_\mu v=\chi(K_\mu)v$, is a~transparent object, then necessarily the $1$-dimensional $\Lambda/\Lambda'$-module $\C_\chi$ is a transparent object over the Hopf algebra $\C[\Lambda/\Lambda']$ with $R$-matrix~$R_0$. If $V$ is $1$-dimensional, then~$V$ is transparent if and only if~$\C_\chi$ is.
\end{Lemma}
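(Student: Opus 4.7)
The plan is to exploit the standard property of Lusztig's $\bar\Theta$-factor that it acts as the identity on $v\otimes w$ whenever $v$ is a highest-weight vector, reducing the transparency condition for $V$ on 1-dimensional partners to a condition on $R_0$ alone. Concretely, $\bar\Theta = 1\otimes 1 + \sum_{\beta>0}\bar\Theta_\beta$ with $\bar\Theta_\beta$ homogeneous of positive degree $\beta$ in the root-lattice grading, and the $\beta>0$ terms annihilate $v$ by the highest-weight hypothesis. Hence $R(v\otimes w)=R_0(v\otimes w)$ for every~$w$.

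Next I would decompose $w=\sum_\psi w_\psi$ into $\Lambda/\Lambda'$-weight components. Writing $R_0=\sum_{\mu,\nu}f(\mu,\nu)K_\mu\otimes K_\nu$, one has $R_0(v\otimes w_\psi)=\hat f(\chi,\psi)\, v\otimes w_\psi$ with $\hat f(\chi,\psi):=\sum_{\mu,\nu}f(\mu,\nu)\chi(K_\mu)\psi(K_\nu)$. Applying $R_{21}=(R_0)_{21}(\bar\Theta)_{21}$, each $(\bar\Theta)_{21,\beta}$ for $\beta>0$ acts on the $V$-factor through a homogeneous element of the opposite triangular subalgebra and thus strictly shifts the $V$-weight away from~$\chi$. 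Projecting onto the weight-$\chi$ subspace of $V$ therefore retains only the $\beta=0$ term of $(\bar\Theta)_{21}$, yielding
\begin{gather*}
\mathrm{pr}_\chi\!\bigl(R_{21}R(v\otimes w_\psi)\bigr) = (R_0)_{21}R_0(v\otimes w_\psi) = \hat f(\psi,\chi)\hat f(\chi,\psi)\, v\otimes w_\psi.
\end{gather*}

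If $V$ is transparent, then $R_{21}R(v\otimes w_\psi)=v\otimes w_\psi$ and so its weight-$\chi$ component equals $v\otimes w_\psi$; this forces $\hat f(\psi,\chi)\hat f(\chi,\psi)=1$ for every weight $\psi$ appearing in some finite-dimensional $u_q(\g,\Lambda,\Lambda')$-module. Every character $\psi$ of $\Lambda/\Lambda'$ is realised as the weight of the one-dimensional module $\C_\psi$ on which all $E_{\alpha_i}$ and $F_{\alpha_i}$ act as zero, so the condition is precisely that the $R_0$-monodromy on $\C_\chi\otimes\C_\psi$ is trivial for every $\psi$, i.e., that $\C_\chi$ is transparent over $\C[\Lambda/\Lambda']$ with $R$-matrix~$R_0$.

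For the second claim, when $V=\C_\chi$ is one-dimensional, $v$ is simultaneously annihilated by all $E_{\alpha_i}$ and all $F_{\alpha_i}$, so the argument used for $\bar\Theta$ applies verbatim to $(\bar\Theta)_{21}$, giving $(\bar\Theta)_{21}(v\otimes w)=v\otimes w$ as well. Hence $R_{21}R(v\otimes w)=(R_0)_{21}R_0(v\otimes w)$ on the nose, with no lower-weight corrections, and transparency of $\C_\chi$ over $\C[\Lambda/\Lambda']$ transfers back to transparency of $V$ over $u_q(\g,\Lambda,\Lambda')$. The main delicate step is the weight-filtration argument ensuring that only the $\beta=0$ component of $(\bar\Theta)_{21}$ can contribute to the weight-$\chi$ part of~$V$; this relies on $v$ being extremal in the weight grading, and once it is in place, the rest reduces to the pairing computation above.
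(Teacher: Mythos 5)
Your overall strategy --- kill the $\bar\Theta$-factors using extremality of the weight vectors and reduce transparency to a statement about $R_0$ over the group ring --- is the right one, and your treatment of the second claim (where $V$ is one-dimensional, so $v$ is simultaneously highest and lowest weight and \emph{both} copies of $\bar\Theta$ act as the identity outright) is correct and agrees with the paper. The gap is in the main direction. With only $v$ extremal and $w$ arbitrary, at most one of the two $\bar\Theta$-factors is annihilated by the highest-weight hypothesis; for the other you invoke a weight-filtration argument, claiming that the $\beta\neq 0$ components of $(\bar\Theta)_{21}$ ``strictly shift the $V$-weight away from $\chi$'' and hence die under the projection $\mathrm{pr}_\chi$. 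But the only weight grading an arbitrary $u_q(\g,\Lambda,\Lambda')$-module carries is by characters of $\Lambda/\Lambda'$, and the shift by $\beta$ is the character $\mu\mapsto q^{-(\beta,\mu)}$, which is \emph{trivial} whenever $\beta\in\operatorname{Cent}_{\Lambda_R}(\Lambda)=\Lambda'$. Such $\beta\neq 0$ do occur in the support of the PBW grading: for $\g=A_2$, $\ell=7$, $\Lambda=\Lambda_R$ one has $\Lambda'=7\Lambda_R$ and $\beta=7(\alpha_1+\alpha_2)$ is the degree of the nonzero monomial $F_1^3F_{12}^4F_2^3$, so $K_\beta=1$ and $b^-v$ lies in the \emph{same} weight space as $v$. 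Hence $\mathrm{pr}_\chi$ does not discard these terms, the identity $\mathrm{pr}_\chi\bigl(R_{21}R(v\otimes w_\psi)\bigr)=(R_0)_{21}R_0(v\otimes w_\psi)$ is unjustified for general $V$ and $W$, and the forced condition $\hat f(\psi,\chi)\hat f(\chi,\psi)=1$ does not follow.

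The paper sidesteps this entirely by choosing the test object, not the test vector: it takes $W=u_q(\g)\otimes_{u_q(\g)^+}\C_\psi$ with canonical generator $w=1\otimes 1_\psi$, which is itself a highest-weight vector. Then in $c^2(v\otimes w)$ the positive leg of the first $\bar\Theta$ kills $w$ and the positive leg of the second kills $v$, so both act \emph{exactly} as the identity and $c^2(v\otimes w)$ is on the nose the group-ring double braiding of $\C_\chi$ with $\C_\psi$ times $v\otimes w$ --- no projection and no filtration argument are needed. A smaller but genuine error in your write-up is the claim that every character $\psi$ of $\Lambda/\Lambda'$ is realised by a one-dimensional module with $E_{\alpha_i},F_{\alpha_i}$ acting by zero: the relation $[E_i,F_i]=(K_{\alpha_i}-K_{\alpha_i}^{-1})/(q_i-q_i^{-1})$ forces $\psi(K_{\alpha_i})^2=1$, so only the characters trivial on $2\Lambda_R$ arise this way (the paper makes exactly this point after the lemma). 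Every $\psi$ \emph{is} realised as the highest weight of the induced module above, which is what you should use.
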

\begin{proof}	Let $V$ be transparent. For every $\psi\colon \Lambda/\Lambda'\to\C^\times$ we have another f\/inite-dimensional module $W:=u_q(\g)\otimes_{u_q(\g)^+} \C_\psi$ with highest weight vector $w=1\otimes 1_\psi$ which we can test this assumption against
	\begin{gather*}
		c^2\colon \ V\otimes W\to W\otimes V\to V\otimes W.
	\end{gather*}
	We calculate the ef\/fect of $c^2$ on the highest-weight vectors $v\otimes w$:
\begin{gather*}
	c^2(v\otimes w)=\tau_{W\otimes V} R_0\bar{\Theta} \tau_{V\otimes W} R_0\bar{\Theta} (v\otimes w).
\end{gather*}
Because $v$, $w$ were assumed highest-weight vectors, the $\bar{\Theta}$ act trivially. Hence follows that~$\C_\chi$,~$\C_{\psi}$ have a trivial double braiding over the Hopf algebra $\C[\Lambda/\Lambda']$ with $R$-matrix $R_0$. Because we could achieve this result for any $\psi$ this means that $\C_\chi$ is transparent as asserted.
	
Now, let $V=\C_\chi$ be $1$-dimensional over $u_q(\g)$ and transparent over $\C[\Lambda/\Lambda']$, and let $w$ be any element in any module $W$, then again the two $\Theta$ act trivially, one time because $v=1_\chi$ is a highest weight vector, and one time because it is also a lowest weight vector. But if the double-braiding of $v=1_\chi$ with any element $w$ is trivial, then $V=\C_\chi$ is already transparent over~$u_q(\g)$.
\end{proof}

\begin{Lemma}\label{lm_transparent}$\C_\chi$ is a transparent object over the Hopf algebra $\C[\Lambda/\Lambda']$ with $R$-matrix $R_0$ iff it is an $f$-transformed of the radical of~$\operatorname{Sym}_G\big(\hat{f}\big)$, i.e.,
\begin{gather*}
\chi(\mu)=f(\mu,\xi),\qquad \xi\in \operatorname{Rad}_0.
\end{gather*}
\end{Lemma}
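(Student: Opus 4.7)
The plan is to apply the preceding lemma to reduce the question to transparency of $\C_\chi$ over the group ring $\C[G]$ with $G=\Lambda/\Lambda'$ and $R$-matrix $R_0=\sum_{\mu,\nu} f(\mu,\nu)K_\mu\otimes K_\nu$; explicitly, I would determine the set of characters $\chi\in\hat{G}$ such that the double braiding on $\C_\chi\otimes\C_\psi$ is trivial for every $\psi\in\hat{G}$. First I would compute the scalar $\lambda(\chi,\psi)$ by which the monodromy $(R_0)_{21}R_0$ acts on $v_\chi\otimes v_\psi$; a direct substitution yields
\begin{gather*}
\lambda(\chi,\psi)=\Bigl(\sum_{\mu,\nu} f(\mu,\nu)\chi(\nu)\psi(\mu)\Bigr)\Bigl(\sum_{\mu,\nu} f(\mu,\nu)\chi(\mu)\psi(\nu)\Bigr),
\end{gather*}
so $\lambda$ factors as a product of the two partial Fourier transforms of $f$ against $\chi$ in its first resp.\ second slot.

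Since we are in the case $G_1=G_2=G$, the rescaled pairing $\hat{f}=d|\Lambda_R/\Lambda'|\cdot f$ is a non-degenerate bi-character on $G\times G$, and the assignment $\xi\mapsto\hat{f}(\,\cdot\,,\xi)$ is an isomorphism $G\stackrel{\sim}{\longrightarrow}\hat{G}$. Thus every character is of the form $\chi=\hat{f}(\,\cdot\,,\xi)=|G|\cdot f(\,\cdot\,,\xi)$ for a unique $\xi\in G$, which is the content of the statement $\chi(\mu)=f(\mu,\xi)$ up to the overall normalisation $|G|$. I would then introduce the automorphism $\sigma\in\operatorname{Aut}(G)$ characterised by $\hat{f}(\sigma(\xi),\,\cdot\,)=\hat{f}(\,\cdot\,,\xi)$ — with $\sigma=\operatorname{id}$ precisely in the symmetric case — and use orthogonality of characters on $G$ to collapse each inner sum to a delta function, obtaining $\sum_\mu f(\mu,\nu)\chi(\mu)=\delta_{\nu,-\xi}$ and $\sum_\nu f(\mu,\nu)\chi(\nu)=\delta_{\mu,-\sigma(\xi)}$. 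Re-assembling, $\lambda(\chi,\psi)=\psi(-\xi-\sigma(\xi))$.

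Transparency of $\C_\chi$ now amounts to $\psi(\xi+\sigma(\xi))=1$ for every $\psi\in\hat{G}$, which by Pontryagin duality is the single equation $\xi+\sigma(\xi)=0$. The last step is to match this with $\operatorname{Rad}_0$: unpacking $\operatorname{Sym}(\hat{f})$ on the zero-fibre $(G\times G)_0=\{(\eta,-\eta)\mid\eta\in G\}$ shows that $(\xi,-\xi)\in\operatorname{Rad}$ precisely when $\hat{f}(\xi,-\eta)\hat{f}(\eta,-\xi)=1$ for every $\eta\in G$, which by bilinearity is $\hat{f}(\eta,\xi+\sigma(\xi))=1$ for every $\eta$, hence by non-degeneracy of $\hat{f}$ exactly $\xi+\sigma(\xi)=0$. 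Via the identification $\operatorname{Rad}(\operatorname{Sym}_G(\hat{f}))\cong\operatorname{Rad}_0$ recalled before Theorem~\ref{thm:InvertyMonodromymat}, this yields the claim.

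The main obstacle is the asymmetric case $\g=D_{2n}$, $\Lambda=\Lambda_W$: there the twist $\sigma$ is non-trivial, and the two partial Fourier transforms of $\chi$ pick out different points $-\xi$ and $-\sigma(\xi)$ of $G$, so both sums must be kept separate until the very last step. In the symmetric case $\sigma=\operatorname{id}$ and the criterion collapses to $2\xi=0$, recovering Example~\ref{ex:RadSymf}; the converse implication that every such $\chi$ actually is transparent is automatic from the same computation, since $\lambda(\chi,\psi)=1$ for all $\psi$ as soon as $\xi+\sigma(\xi)=0$.
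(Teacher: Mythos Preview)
Your argument is correct and close in spirit to the paper's, but organised differently enough to be worth a comment. The paper does not factor the monodromy scalar as a product of two Fourier sums; instead it starts from the expression for the matrix $m_{\mu,\nu}$ already computed before Theorem~\ref{thm:InvertyMonodromymat}, writes the double-braiding scalar as $\sum_{\mu,\nu}\chi(\mu)\psi(\nu)\,m_{\mu,\nu}$, substitutes $\chi(\mu)=f(\mu,\xi)$ and $\psi(\nu)=f(\lambda,\nu)$, and then performs a longer chain of orthogonality reductions to arrive directly at $f(\lambda,\xi)^{-1}f(\xi,\lambda)^{-1}=\operatorname{Sym}_G(\hat f)(\lambda,\xi)$. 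Your route---splitting $(R_0)_{21}R_0$ into its two factors, collapsing each by a single orthogonality, and packaging the asymmetry in the automorphism $\sigma$ with $\hat f(\sigma(\xi),\cdot)=\hat f(\cdot,\xi)$---is shorter and gives the pleasant closed criterion $\xi+\sigma(\xi)=0$, from which the identification with $\operatorname{Rad}_0$ is immediate. The paper's route has the advantage of tying the computation back to the same object $m_{\mu,\nu}$ used in the factorizability proof, so the two arguments visibly share a common core.

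Two small remarks. First, no ``preceding lemma'' is needed here: the statement of Lemma~\ref{lm_transparent} is already about $\C[\Lambda/\Lambda']$, so your opening sentence is superfluous (that reduction is used \emph{after} this lemma, not before). Second, in your final paragraph the identity should read $\hat f(\xi+\sigma(\xi),\eta)=1$ rather than $\hat f(\eta,\xi+\sigma(\xi))=1$, given your convention for $\sigma$; the conclusion is of course unaffected since $\hat f$ is non-degenerate in either slot.
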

\begin{proof}Since $f$ is nondegenerate, we can assume $\chi(\mu)=f(\mu,\xi)$ and wish to prove $\C_\chi$ is transparent if\/f $\xi\in \operatorname{Rad}_0$. We test transparency against any module $\C_\psi$ and also write $\psi(\mu)=f(\lambda,\mu)$ (note the order of the argument). We evaluate the double-braiding on $1_\chi\otimes 1_\psi$ and get the following scalar factor, which needs to be $=1$ for all $\psi$ in order to make~$\C_\chi$ transparent:
\begin{gather*}
\frac{1}{|G|^2}\sum_{\mu,\nu} \chi(\mu)\psi(\nu)\sum_{\substack{\mu_1+\mu_2=\mu \\ \nu_1+\nu_2=\mu}}\operatorname{Sym}\big(\hat{f}\big)((\mu_1,\mu_2),(\nu_1,\nu_2))\\
\qquad{} =\frac{1}{|G|^2}\sum_{\mu,\nu} f(\mu,\xi)f(\lambda,\nu)\sum_{\substack{\mu_1+\mu_2=\mu \\ \nu_1+\nu_2=\mu}}f(\mu_1,\nu_1)f(\nu_2,\mu_2)\\
\qquad{} =\frac{1}{|G|^2}\sum_{\mu,\nu} f(\mu,\xi)f(\lambda,\nu)\sum_{\nu_1,\mu_1}f(\mu_1,\nu_1) f(\nu,\mu)f^{-1}(\nu_1,\mu)f^{-1}(\nu,\mu_1)f(\nu_1,\mu_1)\\
\qquad{} =\frac{1}{|G|}\sum_{\nu}f(\lambda,\nu)	\sum_{\nu_1,\mu_1}f(\mu_1,\nu_1)\delta_{\xi=-\nu+\nu_1}f^{-1}(\nu,\mu_1)f(\nu_1,\mu_1)\\
\qquad{} =\frac{1}{|G|}\sum_{\nu}f(\lambda,\nu)	\sum_{\mu_1}f(\mu_1,\xi+\nu)f(\xi,\mu_1) =f^{-1}(\lambda,\xi)f^{-1}(\xi,\lambda)=\operatorname{Sym}_G\big(\hat{f}\big)(\lambda,\xi).		
\end{gather*}
This scalar factor of the double braiding is equal $+1$ for all $\lambda$ (and hence all $\C_\psi$) if\/f $\xi\in \operatorname{Rad}_0$ as asserted.
\end{proof}

The previous two lemmas combined imply that any irreducible transparent $u_q(\g)$-module has necessarily the characters $\chi(\mu)= f(\mu,\xi)$, $\xi\in\operatorname{Rad}_0$ as highest-weights, and conversely if such a character $\chi$ gives rise to $1$-dimensional $u_q(\g)$-modules (i.e., $\chi|_{2\Lambda_R}=1$), then these are guaranteed transparent objects. Hence the f\/inal step is to give more closed expressions for the $f$-transformed characters $\chi$ of the radical depending on the case and check the $1$-dimensionality condition.

In all cases where $f$ is symmetric we have seen in Example~\ref{ex:RadSymf} that $\operatorname{Rad}_0\big(\operatorname{Sym}_G\big(\hat{f}\big)\big)$ is the $2$-torsion subgroup of $\Lambda/\Lambda'$, so in these cases $\chi$ gives rise to a $1$-dimensional object.
\begin{Corollary}If $f$ is symmetric $($true for all cases except $D_{2n})$ then the transparent objects are all $1$-dimensional $\C_\chi$ where the characters $\chi$ are the $f$-transformed of the elements in the radical of the bimultiplicative form $\operatorname{Sym}\big(\hat{f}\big)|_{G}$ on $G=\Lambda/\Lambda'$. In particular the group of transparent objects is isomorphic to this radical as an abelian group.
\end{Corollary}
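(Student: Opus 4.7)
The plan is to reduce Hopf-algebra transparency to a purely group-theoretic statement via the preceding lemma, and then translate the vanishing of the double braiding into a radical condition by a Fourier/orthogonality computation on the finite abelian group $G=\Lambda/\Lambda'$. By the preceding lemma, a one-dimensional module $\C_\chi$ is transparent over $u_q(\g)$ iff it is transparent over $\C[\Lambda/\Lambda']$ with $R$-matrix $R_0$, so I may work entirely inside the Cartan part. Lemma~\ref{lm:NondegGroupPairing} says that $\hat f$ is a non-degenerate pairing on $G\times G$ (in the symmetric case $G_1=G_2=G$ we are treating), and hence every character of $G$ has the form $f(\,\cdot\,,\xi)$ for a unique $\xi\in G$. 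I therefore write $\chi(\mu)=f(\mu,\xi)$ and parametrise an arbitrary test character by $\psi(\nu)=f(\lambda,\nu)$, with $\xi,\lambda\in G$ the only remaining free data.

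Next I would compute the scalar by which the monodromy matrix $M=(R_0)_{21}R_0$ acts on $1_\chi\otimes 1_\psi$. Since each $K_\mu\otimes K_\nu$ acts diagonally, this scalar is
\begin{gather*}
s_{\chi,\psi}\;=\;\sum_{\mu,\nu\in G}m_{\nu,\mu}\,\chi(\mu)\,\psi(\nu),
\end{gather*}
and substituting the explicit form of the monodromy matrix obtained in Theorem~\ref{thm:InvertyMonodromymat} converts this into the quadruple sum
\begin{gather*}
s_{\chi,\psi}\;=\;\frac{1}{|G|^{2}}\sum_{\mu,\nu\in G}f(\mu,\xi)\,f(\lambda,\nu)\!\sum_{\substack{\mu_{1}+\mu_{2}=\mu\\ \nu_{1}+\nu_{2}=\nu}}\!f(\mu_{1},\nu_{1})\,f(\nu_{2},\mu_{2}).
\end{gather*}

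I would then collapse this expression by iterated application of character orthogonality on $G$, using the bimultiplicativity of $\hat f$ (equivalently the shift property \eqref{f01} of $f$). Summing first over $\mu$ produces a Kronecker delta forcing $\nu_{1}=\nu+\xi$ (so the splitting $\nu_{1}+\nu_{2}=\nu$ is rigidified by $\xi$); summing next over $\nu$ produces a second delta rigidifying the splitting $\mu_{1}+\mu_{2}=\mu$ by $\lambda$. The two $|G|$ factors cancel the $|G|^{-2}$ prefactor, and after collecting the surviving $f$-values one reads off
\begin{gather*}
s_{\chi,\psi}\;=\;f(\xi,\lambda)\,f(\lambda,\xi)\;=\;\operatorname{Sym}_{G}\!\big(\hat f\big)(\lambda,\xi).
\end{gather*}

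Finally, transparency of $\C_\chi$ is the assertion $s_{\chi,\psi}=1$ for every test character $\psi$, i.e.\ $\operatorname{Sym}_{G}(\hat f)(\lambda,\xi)=1$ for every $\lambda\in G$, which is by definition $\xi\in\operatorname{Rad}(\operatorname{Sym}_{G}(\hat f))=\operatorname{Rad}_{0}$. The main technical obstacle is the bookkeeping in the quadruple sum: one has to exploit that $R_0$ and $(R_0)_{21}$ contribute $f$-factors with \emph{swapped} arguments, which is precisely what brings in the symmetrisation, and the Fourier inversion must be performed on $G$ rather than on the ambient lattice $\Lambda$, so one has to keep track of the supports $G_{1},G_{2}\subseteq G$ and of the shift rule \eqref{f01} to avoid spurious contributions from outside the support of $f$. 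Once these points are handled, the equivalence is immediate, and the characterisation $\chi(\mu)=f(\mu,\xi)$ with $\xi\in\operatorname{Rad}_0$ follows.
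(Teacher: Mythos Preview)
Your computation is essentially a reproof of Lemma~\ref{lm_transparent}: you correctly reduce to the Cartan part via the preceding lemma, parametrise characters through the nondegenerate pairing $\hat f$, and evaluate the monodromy scalar on $1_\chi\otimes 1_\psi$ by two rounds of character orthogonality. This is exactly the route the paper takes for that lemma, and your bookkeeping is fine (the final value $f(\xi,\lambda)f(\lambda,\xi)$ differs from the paper's $f^{-1}(\lambda,\xi)f^{-1}(\xi,\lambda)$ only by an overall inverse, which is immaterial for the condition ``$=1$'').

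There is, however, a genuine gap relative to the Corollary as stated. You have shown: \emph{among} one-dimensional $u_q(\g)$-modules, $\C_\chi$ is transparent iff $\chi=f(\,\cdot\,,\xi)$ with $\xi\in\operatorname{Rad}_0$. The Corollary asserts more, namely that \emph{every} transparent object is one-dimensional. For this the paper uses two ingredients you did not invoke. First, the full strength of the preceding lemma: for an \emph{arbitrary} transparent simple $V$ with highest weight $\chi$, the module $\C_\chi$ is transparent over $\C[\Lambda/\Lambda']$, so $\chi=f(\,\cdot\,,\xi)$ with $\xi\in\operatorname{Rad}_0$. Second, and this is where the hypothesis ``$f$ symmetric'' actually enters, Example~\ref{ex:RadSymf}: in the symmetric case $\operatorname{Rad}_0$ is precisely the $2$-torsion of $G=\Lambda/\Lambda'$. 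Since $2\xi=0$ one gets $\chi(2\alpha)=f(2\alpha,\xi)=f(\alpha,2\xi)=1$ for all $\alpha\in\Lambda_R$, i.e.\ $\chi|_{2\Lambda_R}=1$, which is exactly the condition for $\C_\chi$ to exist as a one-dimensional $u_q(\g)$-module (the relation $[E_i,F_i]=(K_{\alpha_i}-K_{\alpha_i}^{-1})/(q_i-q_i^{-1})$ forces $\chi(\alpha_i)=\pm1$). Hence the transparent simple $V$ and the one-dimensional $\C_\chi$ share the same highest weight, so $V\cong\C_\chi$. Without the $2$-torsion step your argument never uses the symmetry hypothesis and does not exclude higher-dimensional transparent simples; you should add this step to complete the proof.
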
	
\begin{Corollary}In the case of symmetric $f$ $($all cases except $D_{2n})$ the fact that $\operatorname{Rad}_0$ is the $2$-torsion of $\Lambda/\Lambda'$ and $f$-transformation is a group isomorphism shows:
	
The group $T$ of transparent objects consists of $\C_\chi$ where $\chi|_{2\Lambda}=1$, i.e., the two-torsion of the character group.
\end{Corollary}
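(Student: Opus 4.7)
The plan is to combine the parametrization of transparent characters from Lemma~\ref{lm_transparent} with the identification of $\operatorname{Rad}_0$ as the $2$-torsion of $G=\Lambda/\Lambda'$ from Example~\ref{ex:RadSymf}, and then to translate the $2$-torsion condition through the group isomorphism $G\cong\hat G$ induced by the perfect pairing $\hat f$.

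First I would apply Lemma~\ref{lm_transparent} together with the reduction lemma just preceding it: in the symmetric case $\Lambda_1=\Lambda_2=\Lambda$, every transparent object is necessarily a $1$-dimensional module $\C_\chi$ with $\chi(\mu)=f(\mu,\xi)$ for some $\xi\in\operatorname{Rad}_0$, and conversely each such character produces a transparent object once the $1$-dimensionality condition $\chi|_{2\Lambda_R}=1$ is met. Since $2\Lambda_R\subseteq 2\Lambda$, the condition $\chi|_{2\Lambda}=1$ asserted in the statement is automatically stronger, so the task reduces to showing that the characters of the form $\chi=f(-,\xi)$ with $\xi\in\operatorname{Rad}_0$ are exactly the characters of $\Lambda/\Lambda'$ that are trivial on $2\Lambda$.

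Next I would invoke that, by Lemma~\ref{lm:NondegGroupPairing}, the rescaled pairing $\hat f=|G|\cdot f$ is perfect and bimultiplicative on $G\times G$, so $\xi\mapsto\hat f(-,\xi)$ is an isomorphism of abelian groups $G\cong\hat G$; the $f$-transformation appearing in Lemma~\ref{lm_transparent} is the same map up to a nonzero global constant and so induces the same isomorphism on the level of characters. A group isomorphism carries $2$-torsion onto $2$-torsion, and by Example~\ref{ex:RadSymf} the subgroup $\operatorname{Rad}_0\subseteq G$ is precisely the $2$-torsion of $G$. Hence the image of $\operatorname{Rad}_0$ in $\hat G$ is the $2$-torsion subgroup of $\hat G$, namely the characters $\chi$ of $\Lambda/\Lambda'$ satisfying $\chi^2=1$.

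Finally I would unwind the condition $\chi^2=1$: lifting $\chi$ along the quotient map $\Lambda\to\Lambda/\Lambda'$, it asserts $\chi(2\mu)=1$ for every $\mu\in\Lambda$, which is exactly $\chi|_{2\Lambda}=1$, yielding the asserted description of $T$ as the $2$-torsion of $(\Lambda/\Lambda')^\wedge$. I do not expect a genuine obstacle; the only delicate bookkeeping is the constant factor distinguishing $f$ from $\hat f$, which cancels in the character-group identification, together with the remark that the $1$-dimensionality hypothesis $\chi|_{2\Lambda_R}=1$ is automatic once $\chi|_{2\Lambda}=1$ is established.
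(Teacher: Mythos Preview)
Your proposal is correct and takes essentially the same approach as the paper. The paper states this corollary without a separate proof, deriving it directly from the two facts named in the statement (that $\operatorname{Rad}_0$ equals the $2$-torsion of $\Lambda/\Lambda'$ and that the $f$-transformation is a group isomorphism); your argument is simply a careful unpacking of that same reasoning, including the bookkeeping on the normalization $f$ versus $\hat f$ and the verification that $\chi^2=1$ on $\Lambda/\Lambda'$ is equivalent to $\chi|_{2\Lambda}=1$.
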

 The remaining case in $D_{2n}$ with $f$ nonsymmetric and has been done by hand in Lemma~\ref{lm:FactForAsymmPairing}.

\section{Quantum groups with a ribbon structure}\label{section6}

In \cite[Theorem 8.23]{Mue98b+}, the existence of ribbon structures for $u_q(\g,\Lambda)$ is proven. In this section we construct a ribbon structure for all cases. In the proof, we use several auxiliary results from~\cite{Mue98b+}.
\begin{Theorem}	Let $u_q(\g,\Lambda)$ be quasitriangular Hopf algebra, with an $R$-matrix satisfying the conditions in Theorem~{\rm \ref{thm:R0}} and let $u:=S(R_{(2)})R_{(1)}$. Then $v:=K_{\nu_0}^{-1}u$ is a ribbon element in~$u_q(\g,\Lambda)$.
\end{Theorem}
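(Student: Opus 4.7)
The plan is to invoke the standard abstract characterization of ribbon elements: in a finite-dimensional quasitriangular Hopf algebra $(H,R)$, a ribbon element is obtained from a grouplike $g \in H$ implementing the square of the antipode by conjugation, $S^2(x)=gxg^{-1}$, and satisfying $g^2 = uS(u)^{-1}$; in that case $v := g^{-1}u$ automatically satisfies all the defining properties of a ribbon element---centrality, $\epsilon(v)=1$, $S(v)=v$, $\Delta(v)=(R_{21}R)^{-1}(v\otimes v)$, and $v^2=uS(u)$---as formal consequences of Drinfeld's identities $S^2(x)=uxu^{-1}$, $\Delta(u)=(R_{21}R)^{-1}(u\otimes u)$, $\epsilon(u)=1$. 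Our task is thus to verify that $g := K_{\nu_0}$ does the job, with $\nu_0$ (up to sign convention) the image of the Weyl vector $2\rho$ in $\Lambda/\Lambda'$. Two pieces are immediate: $\epsilon(v) = \epsilon(K_{\nu_0}^{-1})\epsilon(u) = 1$ since $K_{\nu_0}$ is grouplike and $\epsilon(u)=1$; and quasitriangularity applied to the grouplike $K_{\nu_0}$ yields $(K_{\nu_0}\otimes K_{\nu_0})R = R(K_{\nu_0}\otimes K_{\nu_0})$ and analogously for $R_{21}$, so $K_{\nu_0}^{\otimes 2}$ commutes with $R_{21}R$, which combined with $\Delta(u)=(R_{21}R)^{-1}(u\otimes u)$ gives $\Delta(v)=(R_{21}R)^{-1}(v\otimes v)$.

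Next I would verify the $S^2$-conjugation identity on PBW generators. For the Cartan part both sides act trivially. For a simple root vector the Lusztig antipode gives $S^2(E_{\alpha_i})=K_{\alpha_i}^{-1}E_{\alpha_i}K_{\alpha_i}=q^{-(\alpha_i,\alpha_i)}E_{\alpha_i}$, while $K_{\nu_0}E_{\alpha_i}K_{\nu_0}^{-1}=q^{(\nu_0,\alpha_i)}E_{\alpha_i}$; equality is the classical identity $2(\rho,\alpha_i)=(\alpha_i,\alpha_i)$ with the correct sign choice for $\nu_0$. The analogous check for $F_{\alpha_i}$ and the extension to arbitrary PBW monomials via the $\Lambda_R$-grading are routine. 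The well-definedness of $K_{\nu_0}$ in $u_q(\g,\Lambda)$ after passing to the quotient by $\Lambda'$ is exactly the centrality assumption in Theorem~\ref{thm:R0}, and the argument is insensitive to the choice of $\Lambda\supseteq\Lambda_R$: this is essentially M\"uller's treatment in \cite{Mue98b+}, which extends verbatim to our setting.

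The main obstacle is the identity $K_{\nu_0}^2 = uS(u)^{-1}$, equivalently $S(u) = uK_{\nu_0}^{-2}$. I would establish it by direct computation using the factorization $R = R_0\bar\Theta$. The Cartan part $R_0$, being supported in $\C[\Lambda/\Lambda']\otimes \C[\Lambda/\Lambda']$, contributes to $u$ and to $S(u)$ only through grouplike factors which, thanks to the perfectness of the pairing $\hat{f}$ established in Lemma~\ref{lm:NondegGroupPairing}, cancel symmetrically against each other. The $\bar\Theta$-part contributes, via the dual-basis formula in Theorem~\ref{thm:Theta} and the standard calculation reproduced in \cite[Section~8]{Mue98b+}, exactly the factor $K_{2\rho}^{-2}$ required. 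Once this is in place, the final axioms follow: centrality is $vxv^{-1}=K_{\nu_0}^{-1}S^2(x)K_{\nu_0}=x$, and $S(v)=S(u)K_{\nu_0}=uK_{\nu_0}^{-2}K_{\nu_0}=K_{\nu_0}^{-1}u=v$ (using $[K_{\nu_0},u]=0$, which itself follows from $S^2(K_{\nu_0})=K_{\nu_0}$). I expect the main subtlety to be tracking the $R_0$-dependent corrections to $u$ and verifying they are absorbed by the Cartan-central choice of $\Lambda'$ dictated by Corollary~\ref{cor:LambdaPrime}.
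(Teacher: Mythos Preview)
Your overall framework---the Kauffman--Radford/special-grouplike criterion---is the right one and is essentially what the paper invokes. The gap is your identification of $\nu_0$. You take $\nu_0$ to be $\pm 2\rho$ in $\Lambda/\Lambda'$, but this is not what $\nu_0$ is: in the paper (following M\"uller) one has
\[
\nu_0=\sum_{\alpha\in\Phi^+}(\ell_\alpha-1)\alpha,\qquad \ell_\alpha=\frac{\ell}{\gcd(\ell,2d_\alpha)},
\]
which is the maximal degree in the $\N_0[\alpha_i]$-grading of $u_q^\pm$. M\"uller's computation of $u$ and $S(u)$ from $R=R_0\bar\Theta$ yields $K_{-\nu_0}^2=u^{-1}S(u)$ with \emph{this} $\nu_0$, not $K_{2\rho}^{-2}$; your sentence ``the $\bar\Theta$-part contributes \ldots\ exactly the factor $K_{2\rho}^{-2}$'' is where the argument breaks.

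The two elements $K_{\nu_0}$ and $K_{-2\rho}$ differ by $K_{\nu_0+2\rho}$, and the nontrivial step the paper supplies---which your proposal skips---is the root-system calculation
\[
q^{(\nu_0+2\rho,\beta)}=q^{\sum_{\alpha\in\Phi^+}\ell_\alpha(\alpha,\beta)}=1\quad\text{for all }\beta\in\Lambda_R,
\]
showing $\nu_0+2\rho\in\operatorname{Cent}_\Lambda(\Lambda_R)$, i.e.\ that $K_{\nu_0+2\rho}$ is \emph{central} (not that it is trivial in $\Lambda/\Lambda'$; in general it is not). This is precisely what reconciles the ``comes from $uS(u)^{-1}$'' element $K_{\nu_0}$ with the ``implements $S^2$'' element $K_{-2\rho}$: once $K_{\nu_0+2\rho}$ is central, $K_{\nu_0}$ also conjugates to $S^2$, and then your Kauffman--Radford argument goes through. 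Centrality of $v$ then follows from $K_{2\rho}u$ being central (M\"uller) together with the centrality of $K_{\nu_0+2\rho}$, exactly as in the paper. So your plan is salvageable, but you must work with the correct $\nu_0$ and carry out the $(\nu_0+2\rho,\beta)\equiv 0\bmod\ell$ check; without it the identity $g^2=uS(u)^{-1}$ is simply unproven for your $g$.
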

\begin{proof}
We consider the natural $\mathbb{N}_0[\alpha_i \,| \, i \in I]$-grading on 	the Borel parts $u^\pm:=u_q(\mathfrak{g},\Lambda)^\pm$ \cite{Lus93}. Since $u^\pm$ is f\/inite-dimensional, there exists a maximal $\nu_0 \in \mathbb{N}_0[\alpha_i \,| \, i \in I]$, s.t.\ the homogeneous component $u_{\nu_0}^\pm$ is non-trivial. More explicitly $\nu_0$ is of the form
	\begin{gather*}		
		\nu_0= \sum_{\alpha \in \Phi^+} (\ell_\alpha -1) \alpha,
	\end{gather*}
where $\ell_\alpha:= \frac{\ell}{\operatorname{gcd}(\ell,2d_\alpha)}$.

\looseness=-1 Using the formulas $u=\big(\sum f(\mu,\nu)K_{\mu+\nu}\big)^{-1} \vartheta$ and $S(u)=\big(\sum f(\mu,\nu) K_{\mu+\nu}\big)^{-1}S(\vartheta)$, where $\vartheta=\sum \bar{\Theta}^{(2)}S^{-1}\big(\bar{\Theta}^{(2)}\big)$, M\"uller proves the formula $K_{-\nu_0}^2=u^{-1}S(u)$. Using the fact that $u$ commutes with all grouplike elements, this implies $v^2=uS(u)$.	In order to show that $v$ is central, we f\/irst show that $K_{\nu_0+2\rho}^{-1}$ is a central element. By the $K,E$-relations, this is equivalent to
\begin{gather*}
\nu_0 + 2\rho \in \operatorname{Cent}_{\Lambda}(\Lambda_R),
\end{gather*}
where $\rho=\frac{1}{2} \sum\limits_{\alpha \in \Phi^+}\,\alpha$ is the Weyl vector.
		
	We calculate directly that this is always the case:
	\begin{gather*}
	(\nu_0 + 2\rho,\beta)=q^{\sum\limits_{\alpha\in\Phi^+} (\ell_\alpha-1+1) (\alpha,\beta)}
	=q^{\ell\sum\limits_{\alpha\in\Phi^+}\frac{1}{\operatorname{gcd}(\ell,2d_\alpha)}\cdot 2d_\alpha(\alpha^\vee,\beta)}=1.
	\end{gather*}
	Since $K_{2\rho}ux=xK_{2\rho}u$ holds for all $x \in u_q(\g,\Lambda)$ (see \cite[Lemmas 8.22 and 8.19]{Mue98b+}), we have
	\begin{gather*}
		vx=K_{\nu_0}^{-1}ux=K^{-1}_{\nu_0+2\rho}K_{2\rho}ux
		=K^{-1}_{\nu_0+2\rho}xK_{2\rho}u=xK^{-1}_{\nu_0+2\rho}K_{2\rho}u=xv,
	\end{gather*}
	hence $v$ is central.
\end{proof}

\section{Open questions}\label{section7}

\begin{Question}It was surprising to us that the case $D_{2n}=\mathfrak{so}_{4n}(\C)$ has so many more solutions that the other cases, in particular with non-symmetric~$R_0$, due to the non-cyclic fundamental group. Do these additional modular tensor categories appear elsewhere? Does the non-symmetry have interesting implications on the category? 	
\end{Question}

\begin{Question}Our procedure would be similarly possible for any diagonal Nichols algebra. The Lusztig ansatz can in these cases be found in~{\rm \cite{AY13}}.
\end{Question}
	
\begin{Question}\label{q:modularize} In each case where $u_q(\g,\Lambda),R$ is not factorizable, we can modularize $($see {\rm \cite{Bru00})} the corresponding representation category and get a modular tensor category, which should be representations over some ``quasi-quantum group'' $u_q(\g,\tilde{\Lambda},\omega),R$ which is a quasi-Hopf algebra where the group ring $\C[\tilde{\Lambda}]$ is deformed by a $3$-group-cocycle~$\omega$. Can we describe this quasi-Hopf algebra in a closed form? Moreover, is every factorizable quasi-quantum group the modularization of a quasi-triangular quantum group from our list?
\end{Question}

More technically:
	
\begin{Question} The centralizer transfer map $A_\ell$ in Definition~{\rm \ref{def:matrix_A_l}} $($and correspondingly the form~$a_\ell)$ had a very general characterization, but we could only prove existence by a construction using the classification of simple Lie algebras $($and distinguishing three cases$)$. We strongly suspect that these maps exist under rather general assumptions.
	
Also the result Theorem~{\rm \ref{thm:solutionsgrpeq}} from our previous article~{\rm \cite{LN14b}} has only been proven there for cyclic groups $($and by hand for $\Z_2\times \Z_2)$ although we strongly suspect it holds for every abelian group.
\end{Question}

\subsection*{Acknowledgements}

Both authors thank Christoph Schweigert for helpful discussions and support. They also thank the referees, who gave a relevant contribution to improve the article with their comments. The f\/irst author was supported by the DAAD P.R.I.M.E program funded by the German BMBF and the EU Marie Curie Actions as well as the Graduiertenkolleg RTG 1670 at the University of Hamburg. The second author was supported by the Collaborative Research Center SFB 676 at the University of Hamburg.	


\pdfbookmark[1]{References}{ref}
\LastPageEnding

\end{document}